\documentclass[11pt,a4paper]{article}
\usepackage{indentfirst}
\usepackage{fancyhdr}
\usepackage{color}
\usepackage{mathrsfs}
\usepackage{amsfonts,amssymb,amsmath,amsthm}
\allowdisplaybreaks[2]
\usepackage{authblk}
\usepackage[unicode,pdftex]{hyperref}
\hypersetup{
	colorlinks = true,
	citecolor = green,
	anchorcolor = blue,
	linkcolor = blue}
\usepackage{enumitem}

\newcommand\keywords[1]{\textbf{Keywords}: #1}
\newcommand\msc[1]{\textbf{MSC}: #1}

\newtheorem{thm}{Theorem}[section]
\newtheorem{lemma}[thm]{Lemma}

\newtheorem{coro}[thm]{Corollary}
\newtheorem{prop}[thm]{Proposition}

\newtheoremstyle{rem}{10pt}{10pt}{\rmfamily}{}{\bfseries}{.}{.5em}{}
\theoremstyle{rem}
\newtheorem{rem}[thm]{Remark}

\numberwithin{equation}{section}

\title{The well-posedness of stochastic Korteweg--de Vries equations  revisited}
\author[1]{Jie Chen}
\author[2]{Fan Gu\footnote{Corresponding author. E-mail address: gufan@amss.ac.cn}}
\affil[1]{\scriptsize \textit{School of Science, Jimei University, Xiamen 361021, P.R. China}}
\affil[2]{\scriptsize \textit{School of Statistics and Mathematics, Central University of Finance and Economics,  Beijing 102206, P.R. China}}
\date{}

\begin{document}
	\maketitle	
	\begin{abstract}
		In this paper, we propose a new view, which leads to  almost sure well-posedness in $H^{s}(\mathbb{R}), s\geq 0$, for studying stochastic KdV equations. Different from \cite{de1999white} or \cite{kdvmuti},  by introducing a solution space inspired by \cite{guo2009global}, we prove the local  well-posedness result only under natural $H^s(\mathbb{R}), s\geq 0$ conditions parallel to  deterministic KdV equations. Furthermore, just basing on the $L_x^2$ conservation law of KdV equations, we extend the solution to  a global one. The well-posedness frame obtained  in this paper  not only reduces several restrictions of the noise kernel, but may also have crucial values when one deals with  dynamical problems of stochastic KdV equations. 
	\end{abstract}	

\msc{60H15, 35Q53}

\keywords{Stochastic KdV equations, Bilinear estimate, Well-posedness}
	
	\section{Introduction}\label{sec:introduction}
	We study the local and global well-posedness of the stochastic Korteweg--de Vries equation (s-KdV) in $H^s(\mathbb{R})$, $s\geq 0$. The model is defined on $(\Omega, \mathcal{F},\mathbb{P},\{\mathcal{F}_t\}_{t\geq 0} )$ and driven by the additive noise
	\begin{equation}\label{askdv}
		\left\{
		\begin{aligned}
			&d u=-(u_{xx}+u^2)_xdt +\Phi dW_t,\\
			&u|_{t = 0} = u_0\in H^s(\mathbb{R}),
		\end{aligned}
		\right.
	\end{equation}
	or the multiplicative noise
	\begin{equation}\label{mskdv}
		\left\{
		\begin{aligned}
			&d u=-(u_{xx}+u^2)_xdt +u\Phi dW_t,\\
			&u|_{t = 0} = u_0\in H^s(\mathbb{R}),
		\end{aligned}
		\right.
	\end{equation}
	where $u$ is real-valued, $\Phi$ is an operator defined by
	\begin{equation}\label{kernel}
		\Phi f(x) = \int_{\mathbb{R}} k(x,y)f(y)~dy,\quad \forall~f\in L^2(\mathbb{R}).
	\end{equation}
	$dW_t/dt$ adapted to $\{\mathcal{F}_t\}_{t\geq0}$ is a space-time independent white noise on $L^2(\mathbb{R})$. $W_t$ can be represented as $\sum_{j=0}^{\infty}\beta_j(t)e_j$, where $\{\beta_j\}$ is a sequence of mutually independent standard $1d$ Brownian motions and  $\{e_j\}$ is an orthonormal basis of $L^2(\mathbb{R})$.
	
	The well-posedness of s-KdV had been studied in \cite{KdVH1,de1999white,kdvmuti}. In \cite{de1999white}, de Bouard--Debussche--Tsutsumi estimate the stochastic convolution in Bourgain spaces $X^{s,b}$, $b<1/2$. However, there is no bilinear estimate in this space due to \textit{high $\times$ low $\rightarrow$ high} interaction in the frequency space. Then, they assume the initial data $u_0\in \dot{H}^{-3/8}$ to make sure that this interaction is very weak. In \cite{kdvmuti}, de Bouard--Debussche use $v(t) = u(t)-e^{-\partial_{xxx}}u_0$ as a new variable. As a result, while they no longer need to assume $u_0\in \dot{H}^{-3/8}$, their argument requires $L^1(\mathbb{R})$ restriction of the noise kernel, when they control the stochastic convolution.
	
	In this paper, only under some regularity conditions parallel to the deterministic case, we develop a new approach to study the well-posedness of stochastic KdV equations. 
	
	The article begin with the bilinear estimate. Inspired by Guo \cite{guo2009global}, we introduce a new solution space $Z^{s,b}_T$ containing the norm $L_x^2L_T^\infty$ for the low frequency part (For the definition of $Z^{s,b}_T$, see \eqref{defixsbt}. Note that $Z^{s,b}_T\hookrightarrow C([0,T];H^s)$ trivially). This norm suffices to handle the bilinear estimate for the \textit{high $\times$ low $\rightarrow$ high} frequency interaction.
	
	As a result, we need to establish the low-frequency estimate for the stochastic convolution in $L_\omega^2L_x^2L_T^\infty$. The difficulty of the $L_\omega^2L_x^2L_T^\infty$ estimate, owing to taking $L_T^\infty$ first, has been observed in the existing literature; see for example \cite{KdVH1}. Thus, when \cite{KdVH1} tried to take  $L^2_\omega$ first, the estimate may not be sharp. To overcome this, we obtain sharp estimates for the operator $P_1e^{-t\partial_{xxx}}$ by exploiting its low-frequency properties and employing Taylor expansion.
	
	Problems also arise in the  extension of the local solution. Just like \cite{kdvmuti}, we first prove an $L^2_\omega L_T^\infty L_x^2$ priori estimate. However, different from \cite{kdvmuti}, we cannot get a $\|\cdot\|_{L^{2}_\omega Z_T^{0,b}}$ estimate of the solution by the $L^2_\omega L_T^\infty L_x^2$ priori estimate in a pathwise view because of only $L_\omega^{2m}L_x^2L_T^\infty, m=1$ estimate can be done for the stochastic convolution. Fortunately, by contradiction, we can show the global well-posedness in the almost surely sense inspired by the proof of the deterministic case.

	The well-posedness structure of stochastic KdV equations proposed in this paper not only can help us further figure out the connection between deterministic KdV equations and stochastic KdV equations, but also has the following advantages:
	First, we get rid of several restrictions of the noise kernel (such as convolution type operators and the $L^1_x$ restriction). Second, the well-posedness results expand to $H_x^s, s\geq 0$ only relying on the $L_x^2$ conservation law of the deterministic KdV equation. Last and most importantly, it provides a new approach to study stochastic KdV equations, which sometimes might be important in dealing with dynamical properties of stochastic KdV equations such as \cite{averaging_kdv}.
	
	In \cite{averaging_kdv},  we need to deal with a trilinear estimate of a coupling term like $f^2g$, where $f\in L_t^rL_x^2, r\in[2,\infty)$.  If we use the $\|\cdot\|_{Y_{b,0,-q}}, q>0$ type norm to treat it, by duality, we should do estimates like
	$$
	\begin{aligned}
		(h,f^2g)
		\lesssim&\|D^qh\|_{L^4_{t}L_x^\infty}\|D^{-q}(f^2g)\|_{L^{4/3}_{t}L_x^1}.
	\end{aligned}
	$$
	However, there is no general Sobolev embedding that can deal with $\|D^{-q}(\cdot)\|_{L_x^1}$. Here,  $ \|\cdot\|_{Y^T_{b,0,-q}}$ is defined by $\|D^{-q}(\cdot)\|_{X^{0,b}_T}+\|\cdot\|_{X^{0,b}_T}$.
	
	Now, we list our main results in this paper.
	\begin{thm}\label{globalwella}
		Let $s\geq 0$, $k\in L^2_{y}H^s_x$, $3/8 < b < 1/2$. Then for any $u_0\in H^s(\mathbb{R})$, there exists a unique global solution $u$ of \eqref{askdv} $s.t.$ $u\in Z^{s,b}_T$ for any $T>0$ almost surely.
	\end{thm}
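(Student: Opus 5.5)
We will prove the global well-posedness of \eqref{askdv} by a pathwise fixed point on the mild formulation
\begin{equation*}
u(t)=S(t)u_0-\int_0^t S(t-t')\partial_x(u^2)(t')\,dt'+\int_0^t S(t-t')\Phi\,dW_{t'},\qquad S(t)=e^{-t\partial_{xxx}},
\end{equation*}
in the space $Z^{s,b}_T$, where the low-frequency part is measured in $L_x^2L_T^\infty$ and the high-frequency part in $X^{s,b}_T$. We write $z(t)=\int_0^tS(t-t')\Phi\,dW_{t'}$ and $v=u-z$, so that $v$ solves the random deterministic equation $v(t)=S(t)u_0-\int_0^tS(t-t')\partial_x\big((v+z)^2\big)\,dt'$.

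\textbf{Step 1: the stochastic convolution.} We show $z\in Z^{s,b}_T$ almost surely whenever $k\in L^2_yH^s_x$, with $\mathbb{E}\|z\|_{Z^{s,b}_T}^2\lesssim C(T)\|k\|_{L^2_yH^s_x}^2$; note that $\|\Phi\|_{L_2(L^2;H^s)}=\|k\|_{L^2_yH^s_x}$. For the high-frequency $X^{s,b}_T$ part with $b<1/2$ we use the standard de Bouard--Debussche--Tsutsumi computation, which requires only the Hilbert--Schmidt norm. The low-frequency $L^2_\omega L^2_xL^\infty_T$ part is the delicate one. Here we Taylor-expand $e^{it\xi^3}=\sum_n (it\xi^3)^n/n!$ on $|\xi|\le1$. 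Each term factorizes into a time factor $t^n$ times a stochastic integral $\int_0^t (t')^m\,dW$ composed with the fixed-$x$ kernel operator $P_1\partial_x^{3n}$. We then apply Doob/BDG in $t$ for each fixed $x$ and integrate in $x$, and sum using $1/n!$. This is the main obstacle: the point is to take the supremum in $T$ inside before $L^2_\omega$ without losing sharpness.

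\textbf{Step 2: local existence.} We use the bilinear estimate in $Z^{s,b}_T$ for $3/8<b<1/2$ (inspired by Guo), of the form $\big\|\int_0^tS(t-t')\partial_x(fg)\big\|_{Z^{s,b}_T}\lesssim T^{\theta}\|f\|_{Z^{s,b}_T}\|g\|_{Z^{s,b}_T}$, together with the linear estimate $\|S(t)u_0\|_{Z^{s,b}_T}\lesssim\|u_0\|_{H^s}$. The $L^2_xL^\infty_T$ norm handles the high$\times$low$\to$high interaction. With these, the map is a contraction on a ball in $Z^{s,b}_{T^*}$ for a random time $T^*(\omega)>0$ depending on $\|u_0\|_{H^s}+\|z\|_{Z^{s,b}_1}$. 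Uniqueness in $Z^{s,b}_T$ follows from the same difference estimate.

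\textbf{Step 3: globalization.} First we prove an $L^2$ a priori bound $\mathbb{E}\sup_{t\le T}\|u(t)\|_{L^2}^2\le C(T,\|u_0\|_{L^2},\|k\|_{L^2_yL^2_x})$. For this we apply It\^o's formula to $\|u\|_{L^2}^2$ on smooth approximations; the nonlinear term vanishes by the conservation law, leaving $2\int\langle u,\Phi dW\rangle+\|\Phi\|_{L_2}^2t$, and BDG controls the martingale. We then argue by contradiction. Suppose that, on a set of positive probability, the maximal existence time $T_{\max}<T$. The local time from Step 2 depends only on $\|u(t_0)\|_{L^2}$ and on the restarted stochastic convolution norm, which is a.s. finite on $[0,T]$ by Step 1. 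So if $\sup_{t<T_{\max}}\|u(t)\|_{L^2}<\infty$ a.s., we can restart uniformly close to $T_{\max}$, which is a contradiction; the a priori bound gives this finiteness. For $s>0$ the solution keeps $H^s$ regularity, since the local time in $Z^{s,b}$ can be chosen depending only on the $L^2$-level norms (persistence of regularity via the bilinear estimate being linear in the higher norm). Finally, letting $T\to\infty$ along integers yields a global solution in $Z^{s,b}_T$ for every $T$ a.s.
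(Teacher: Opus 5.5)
Your proposal is correct and follows the route the paper indicates for the additive case. The paper omits this proof but sketches the same three ingredients: the stochastic convolution bound of Proposition \ref{stochesti}, including the Taylor-expansion/BDG treatment of the low-frequency $L^2_\omega L^2_xL^\infty_T$ norm; a pathwise contraction in $Z^{s,b}$ using the bilinear estimates; and extension to a global solution via the It\^o-formula $L^2_x$ a priori bound obtained on smooth approximations. Your pathwise blow-up argument with $v=u-z$ is the natural simplification of the truncation and stopping-time machinery the paper needs for the multiplicative case, since the uniform restart only requires $\|z\|_{Z^{0,b}_{T+1}}<\infty$ almost surely.
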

	
	\begin{thm}\label{globalwellm}
		Let $s\geq 0$, $J^s_xk\in L_x^\infty L^2_y$, $3/8< b < 1/2$. Then for any $u_0\in H^s(\mathbb{R})$, there exists a unique global solution $u$ of \eqref{mskdv} $s.t.$  $u\in Z^{s,b}_T$ for any $T>0$ almost surely.
	\end{thm}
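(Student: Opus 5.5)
The proof of Theorem \ref{globalwellm} follows the scheme used for the additive case, with the stochastic convolution now depending on the solution. We rewrite \eqref{mskdv} in mild form,
$$
u(t)=S(t)u_0-\int_0^t S(t-\tau)\partial_x(u^2)(\tau)\,d\tau+\int_0^t S(t-\tau)u(\tau)\Phi\,dW_\tau,\qquad S(t)=e^{-t\partial_{xxx}},
$$
and we truncate the nonlinearity. Fix a cutoff $\theta_R(\|u\|_{Z^{s,b}_t})$ with $\theta_R=1$ on $[0,R]$ and $\theta_R=0$ on $[2R,\infty)$. We multiply both the quadratic term and the noise term by $\theta_R$ and solve the truncated equation by a fixed point in the Banach space $L^2_\omega Z^{s,b}_T$ of adapted processes; the space $L^{2}_\omega$ is where the stochastic estimates are available.

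\textbf{Estimates for the fixed point.} The linear and Duhamel parts are handled by the standard $Z^{s,b}_T$ linear estimates and the bilinear estimate in $Z^{s,b}_T$ proved earlier for the deterministic part. That bilinear estimate uses the low-frequency $L^2_xL^\infty_T$ component to handle the high$\times$low$\to$high interaction, and it gains a factor $T^{\delta}$. The truncation turns this into a Lipschitz bound of size $CR\,T^\delta$.

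For the stochastic term we repeat the additive-noise estimates with $\Phi$ replaced by the random operator $f\mapsto u\,\Phi f$, whose Hilbert--Schmidt norm into $H^s$ we must control. For this we use
$$
\|u\Phi\|_{L_2(L^2,H^s)}^2=\int \|J^s_x(u(\cdot)k(\cdot,y))\|_{L^2_x}^2\,dy\lesssim \|u\|_{H^s}^2\,\|J^s_xk\|_{L^\infty_xL^2_y}^2 ,
$$
which is a fractional Leibniz/Kato--Ponce type bound and is exactly where the hypothesis $J^s_xk\in L^\infty_xL^2_y$ enters. We then estimate the high-frequency $X^{s,b}$ part of the stochastic convolution with $b<1/2$ as in de Bouard--Debussche. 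The low-frequency $L^2_\omega L^2_xL^\infty_T$ part we treat with the sharp estimate for $P_1e^{-t\partial_{xxx}}$: we Taylor expand the low-frequency propagator in $t$, apply Burkholder--Davis--Gundy to each term to get martingale bounds, and sum the expansion. Each of these bounds is controlled by $T^{\delta}\,\mathbb{E}\sup_{t\le T}\|u\|_{H^s}^2\le T^\delta\|u\|_{L^2_\omega Z^{s,b}_T}^2$. Because the map is linear in $u$ and carries the cutoff, we obtain a contraction for $T=T(R)$ small. Iterating on intervals of length $T(R)$, which is possible since the bounds are uniform under the cutoff, gives a global solution $u_R$ of the truncated equation. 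Uniqueness in $Z^{s,b}_T$ follows from the same difference estimates.

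\textbf{Removing the cutoff and the main obstacle.} Define the stopping time $\tau_R=\inf\{t:\|u_R\|_{Z^{s,b}_t}\ge R\}$. Then $u=u_R$ on $[0,\tau_R)$, the solutions for different $R$ are consistent, and $\tau_R$ is nondecreasing in $R$. It therefore remains to show $\tau_R\to\infty$ almost surely, and this is the step we expect to be the main obstacle. First we establish an $L^2_\omega L^\infty_TL^2_x$ a priori bound via Itô's formula for $\|u\|_{L^2}^2$. The nonlinear term drops out by the $L^2$ conservation law, the Itô correction is $\int\!\int |u|^2|k|^2\,dy\,dx\le \|k\|_{L^\infty_xL^2_y}^2\|u\|_{L^2}^2$, and BDG together with Gronwall then bound $\mathbb{E}\sup_{t\le T\wedge\tau_R}\|u\|_{L^2}^2$ independently of $R$.

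As noted in the introduction, this bound does not convert pathwise into a $Z^{0,b}$ bound, because the stochastic convolution is only controlled in $L^2_\omega L^2_xL^\infty_T$. So we argue by contradiction, as in the deterministic case. Suppose that with positive probability $\sup_R\tau_R=\tau^*<T$. On that event the $L^2$ norm stays bounded by some $M(\omega)$, and the local existence time, restarted at any time and run with the conditional (strong Markov) version of the local estimates, depends only on $M$. This is impossible if $\|u\|_{Z^{0,b}}$ blows up at $\tau^*$, so the event has probability zero. Finally, persistence of $H^s$ regularity for $s>0$ follows because the $H^s$ local time depends only on the $L^2$ norm, using the bilinear estimate in the tame form $\|u\|_{Z^{s,b}}\|u\|_{Z^{0,b}}$. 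This yields $u\in Z^{s,b}_T$ for all $T$ almost surely.
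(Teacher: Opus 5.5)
Your architecture (truncate, contract in $L^2_\omega Z_T$, Taylor-expand $P_1e^{-t\partial_{xxx}}$ for the low-frequency stochastic convolution, stopping times, Itô $L^2$ bound, restart by contradiction) matches the paper's. The gap is in the truncation you chose: it does not support the two steps where the paper's truncation does the real work. First, the Lipschitz bound for the truncated nonlinearity is not routine for $b>3/8$. The paper points out (Remark~\ref{rem:why_wrong}) that the classical argument for $\|\theta(\cdot)u-\theta(\cdot)v\|_{X^{0,b}_T}\lesssim\|u-v\|_{X^{0,b}_T}$ fails once $b\ge 1/4$. Its replacement (Lemma~\ref{truncated_equation}, Appendix~\ref{truncated}) uses that the cutoff is a function of $\|u\|^2_{X^{0,b}_t}$. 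The increments $\|u\|^2_{X^{0,b}_t}-\|u\|^2_{X^{0,b}_{t'}}$ are integrals over $[t',t]$ of a nonnegative density, so the time difference quotients of $\theta(\cdot)u$ can be closed with Hardy's inequality. Your cutoff $\theta_R(\|u\|_{Z^{s,b}_t})$ also involves the running suprema $\|P_1u\|_{L^2_xC_t}$ and $\|u\|_{C_tH^s_x}$, whose increments have no such structure. You give no substitute argument, so the sentence ``the truncation turns this into a Lipschitz bound of size $CRT^\delta$'' is exactly the step that needs proof. The paper avoids the problem with $B_R(u)$: it truncates $P_1u$ by its own $L^2_xC_t$ norm, which is harmless because the bilinear estimates only use Lebesgue-type norms of $P_1u$ such as $L^2_xL^\infty_T$ and $L^\infty_TL^2_x$. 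It truncates $P_{>1}u$ by $\|u\|^2_{X^{0,b}_t}$, and it does not truncate the noise, which is linear, at all.

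Second, cutting off at the $H^s$ level breaks your last step. Your $\tau_R$ is a $Z^{s,b}$ exit time, but the $L^2$ bound and the contradiction argument only rule out blow-up of $\|u\|_{Z^{0,b}}$. The claim that ``the $H^s$ local time depends only on the $L^2$ norm'' is a pathwise deterministic statement. Here the stochastic convolution is controlled only in $L^2_\omega$, and under your cutoff the local time depends on the $Z^{s,b}$-level $R$. The paper gets persistence by keeping $B_R$ at the $Z^{0,b}$ level, so the truncated estimates are linear in $Z^{s,b}$: Proposition~\ref{trunbiliearestis} gives $\lesssim RT^\varepsilon\|u\|_{Z^{s,b}_T}$. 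Hence $u_R\in L^2_\omega Z^{s,b}_T$ for every $T$ (Proposition~\ref{localwellres}). Then $u\in Z^{s,b}_T$ a.s. follows from $u=u_R$ on $[0,\sigma^T_R(u_R)]$, $\sigma^T_R(u_R)\to T$ a.s., and Chebyshev (Proposition~\ref{highreglobal}).

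A further omission: Itô's formula for $\|u\|^2_{L^2}$ cannot be applied directly to an $L^2$-valued solution, since the drift lies only in very negative Sobolev spaces. The paper first proves the bound for $s\ge 3$ data and kernel. It then approximates by $u_0^{(m)}$ and $k_m$. Because $k_m$ need not converge to $k$ in $L^\infty_xL^2_y$, it needs Lemma~\ref{lem:convergence_need_to_explain} and a stopping-time comparison to pass to the limit.
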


	\begin{rem}
		Note that for $k(x,y) = K(x-y)$, $\|J^s_xk\|_{L_x^\infty L_y^2} = \|J^s_xK\|_{L^2_x} = \|K\|_{H^s}$. Thus we remove the $L^1$ restriction and generalize the main result in \cite{kdvmuti}.
	\end{rem}
	
	\begin{rem}
		The well-posedness of the stochastic KdV on the torus had also been extensively studied. See for example \cite{de2005periodic,oh2009invariance,oh2010periodic,oh2024global} and reference therein. In fact, by subtracting the average, the resonant function for KdV on the torus has better property than KdV on the real line. By using this, estimating the second iteration and approximation, Oh showed the well-posedness of the stochastic KdV on the torus with additive space-time white noise. However, to the best of the authors' knowledge, a similar result on the real line appears to remain open.
	\end{rem}

	Since Theorem \ref{globalwella} can be obtained by modifying the proof of Theorem \ref{globalwellm} slightly, we would omit the proof of it totally. In fact, By the estimate in Proposition \ref{stochesti} for the stochastic convolution term, one can use a fixed point argument on each path to show the existence of solution locally. Then by using the priori estimate on the $L^2_x$ norm of the solution, one can extend the local solution to a global one.

	This paper is organized in the following manner: In Section \ref{sec:preliminaries}, we introduce some definitions and notations. We prove some estimates for the bilinear term and the stochastic convolution in Section \ref{estimate}. Then in Section \ref{wellposednesstrun}, we show the global well-posedness of the truncated equation. Finally, we prove the main result in Section \ref{global_solution}. In Appendices \ref{truncated}, \ref{proofoffrac}, we give the proof of two technical lemmas. In Appendix \ref{recallmu}, we recall some multilinear estimates for the KdV equation in Bourgain spaces.
	
	\section{Preliminaries}\label{sec:preliminaries}
	Let $a,b\in \mathbb{R}$. $a\lesssim b$ means $a\leq Cb$ for a constant $C>0$. If $a\lesssim b$ and $b\lesssim a$, we denote this by $a \thicksim b$. We use $\langle x \rangle$ to denote $(1+|x|^2)^{1/2}$.
	
	For any $ \varphi \in \mathcal{S}'(\mathbb{R}\times\mathbb{R})$, we use $\hat{\varphi}(\tau,\xi)$ or $ \mathscr{F}_{t,x}\varphi(\tau,\xi)$ to represent the space-time Fourier transform of $\varphi$.	We use $ \mathscr{F}_x\varphi(t,\xi)$ to denote the space Fourier transform of $\varphi$.
	
	Let $J^s =\mathscr{F}^{-1}_{\xi}\langle\xi\rangle^s\mathscr{F}$, $D^s = \mathscr{F}^{-1}_{\xi} |\xi|^s \mathscr{F}$,
	$$\|u\|_{H^s(\mathbb{R})}:=\|J^su\|_{L^2(\mathbb{R})}, \quad\|u\|_{\dot{H}^s(\mathbb{R})}:=\|D^su\|_{L^2(\mathbb{R})}.$$
	For any $f\in L^2(\mathbb{R})$, we denote the frequency cut-off operator by
	\begin{equation}\label{freque}
		P_1f=\mathscr{F}^{-1}_\xi (\chi_{[-1,1]}\mathscr{F}_xf),\quad P_Nf = \mathscr{F}^{-1}(\chi_{[-N,N]\setminus [-N/2,N/2]}\mathscr{F}_xf),~N\geq 2
	\end{equation}
	and $P_{>1} = I-P_1$.

    Let $\|u\|_{L_t^qL_x^r}:= \|\|u(t,x)\|_{L_x^r}\|_{L_t^q}$, $L_T^q L_x^r$ be the restricted norm of $L_t^qL_x^r$ on $[0,T]\times \mathbb{R}$. Let $\|u\|_{L_x^rC_T}:=\|\|u(t,x)\|_{C_t([0,T])}\|_{L^r_x}$. Similarly, we define $L_x^rL_T^q$ and $C_TH^s_x$. 
    
    We denote the Bourgain norm by
    $$\|u\|_{X^{s,b}} := \|\langle\xi\rangle^s\langle\tau-\xi^3\rangle^b\hat{u}(\tau,\xi)\|_{L^2_{\tau,\xi}} = \|w(t,\xi)\|_{L_\xi^2H_t^b}$$
    where $w(t,\xi)=\langle\xi\rangle^se^{-it\xi^3}\mathscr{F}_x(u(t))(\xi)$. Instead of the classical restricted norm for $X^{s,b}$ on $[0,T]$, we define
	\begin{equation*}
		\|u\|_{X^{s,b}_T}^2=\int_{\mathbb{R}}\int_0^T \left((1+t^{-2b})|w(t,\xi)|^2+\int_0^t\frac{|w(t,\xi)-w(t',\xi)|^2}{|t-t'|^{1+2b}}~dt'\right)dtd\xi.
	\end{equation*}
	To demonstrate the rationality of the definition for $X^{s,b}_T$, we need the following lemma. 
	
	\begin{lemma}\label{lem:equa-norm}
		Let $0\leq b<1/2$. Then
		\begin{equation*}
			c\|u\|_{X^{s,b}_T}\leq\inf\{\|\tilde{u}\|_{X^{s,b}}:\tilde{u}|_{[0,T]\times\mathbb{R}} = u\}\leq C\|u\|_{X^{s,b}_T},
		\end{equation*}
		where $c$ and $C$ depend on $b$ only.
	\end{lemma}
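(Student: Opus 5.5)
Since $\|u\|_{X^{s,b}}=\|w\|_{L^2_\xi H^b_t}$ and the $\xi$-weight $\langle\xi\rangle^s$ and the modulation factor $e^{-it\xi^3}$ are absorbed into $w$, the statement reduces, for each fixed $\xi$, to a one-dimensional fact about $H^b(\mathbb{R}_t)$ with $0\le b<1/2$. For $f$ on $[0,T]$, set
\[
N_T(f)^2=\int_0^T(1+t^{-2b})|f(t)|^2\,dt+\int_0^T\!\!\int_0^t\frac{|f(t)-f(t')|^2}{|t-t'|^{1+2b}}\,dt'dt .
\]
It then suffices to show $c\,N_T(f)\le\inf\{\|g\|_{H^b(\mathbb{R})}: g|_{[0,T]}=f\}\le C\,N_T(f)$ with constants depending only on $b$ (in particular independent of $T$). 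Squaring, integrating in $\xi$, and taking the infimum pointwise in $\xi$ then gives the lemma. The infimum over $\xi$-dependent extensions gives a measurable extension in $(t,\xi)$, because the extension we construct is linear.

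For the upper bound I use the extension by zero, $g=f\chi_{[0,T]}$, together with the Gagliardo characterization $\|g\|_{H^b}^2\sim\|g\|_{L^2}^2+\iint_{\mathbb{R}^2}|g(t)-g(t')|^2|t-t'|^{-1-2b}\,dt\,dt'$. The double integral splits into the part with both points in $[0,T]$, which is twice the seminorm term of $N_T(f)$, and the cross part with $t\in[0,T]$, $t'\notin[0,T]$, which equals $\int_0^T|f(t)|^2\big(\int_{t'<0}+\int_{t'>T}\big)|t-t'|^{-1-2b}dt'\,dt\lesssim\int_0^T|f(t)|^2\big(t^{-2b}+(T-t)^{-2b}\big)dt$. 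The $t^{-2b}$ piece is controlled by $N_T(f)$. The piece with $(T-t)^{-2b}$ is the asymmetric one, and this is the main obstacle, because $N_T$ only carries the weight at $t=0$. I will handle it by a fractional Hardy inequality near $t=T$: for $0<b<1/2$,
\[
\int_{T/2}^T\frac{|f(t)|^2}{(T-t)^{2b}}dt\lesssim \int_{T/2}^T\!\int_{T/2}^T\frac{|f(t)-f(t')|^2}{|t-t'|^{1+2b}}+T^{-2b}\!\int_{T/2}^T|f|^2 .
\]
This holds because for $b<1/2$ constants lie in $H^b$ of the interval with the boundary weight integrable. One proves it by comparing $f(t)$ with averages over dyadic intervals $[T-2^{-k+1}\delta,T-2^{-k}\delta]$ and summing; the gaps between consecutive averages are controlled by the seminorm, and the final average is controlled by $T^{-2b}\|f\|_{L^2}^2\le\int t^{-2b}|f|^2$. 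An alternative that avoids Hardy near $T$ is to extend $f$ evenly across $T$ before cutting off at $2T$. Reflection preserves the seminorm up to a constant, and the cutoff at $2T$ then produces the weight $(2T-t)^{-2b}$ on the reflected function, i.e.\ $t^{-2b}$ on $f$, which $N_T$ controls. I would present this reflection version, $g(t)=f(t)$ on $[0,T]$, $g(t)=f(2T-t)$ on $[T,2T]$, $g=0$ elsewhere, and check that the cross terms between $[0,T]$ and $[T,2T]$ are bounded by the seminorm: since $|t-(2T-t')|\ge|t-t'|$, these terms are at most the seminorm term.

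For the lower bound, fix any extension $g\in H^b(\mathbb{R})$. The $L^2$ term and the seminorm over $[0,T]^2$ are trivially bounded by $\|g\|_{H^b}^2$. The weighted term $\int_0^T t^{-2b}|g|^2$ is bounded by the standard fractional Hardy inequality on $\mathbb{R}$, $\int|t|^{-2b}|g|^2\lesssim\|g\|_{\dot H^b}^2$, valid for $0\le b<1/2$ (for instance by Pitt's inequality, or by the dyadic argument above). Both directions are linear and the constants depend only on $b$, so integrating in $\xi$ completes the proof.
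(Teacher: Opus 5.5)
Your proof is correct for $0<b<1/2$, but it takes a different route from the paper.

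The paper argues by citation. From Lemma 2.1 of de Bouard--Debussche it takes the equivalence of the extension infimum with the quantity carrying both endpoint weights $t^{-2b}$ and $(T-t)^{-2b}$. It then removes the weight at $t=T$ by scaling to $T=1$ and invoking Triebel's intrinsic (Slobodeckij) description of $H^b(0,1)$, which bounds $\int_0^1(1-t)^{-2b}|f|^2\,dt$ by $\int_0^1|f|^2\,dt$ plus the seminorm.

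Your first option, a Hardy inequality near $t=T$ on the interval, is this same step done by hand. Your preferred reflection argument bypasses it. Reflecting evenly across $T$ and then extending by zero moves the right-endpoint singularity to $2T$, where it sees only the values of $f$ near $t=0$, exactly where $N_T$ carries its weight. The crossing terms are dominated by the seminorm because $2T-t-t'\ge|t-t'|$ on $[0,T]^2$. The two remaining boundary pieces are each at most $T^{-2b}\int_0^T|f|^2\le\int_0^T t^{-2b}|f|^2$. The lower bound then needs only the whole-line fractional Hardy inequality at one endpoint.

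What your route buys:
\begin{itemize}
\item It is self-contained and does not depend on the precise form of the cited lemma.
\item It gives $T$-independent constants without a scaling step.
\item It provides an explicit linear extension defined pointwise in $(t,\xi)$, so the reduction from $X^{s,b}$ to the one-dimensional statement is immediate. The upper bound uses this extension and the lower bound just slices an arbitrary extension in $\xi$, so no pointwise-in-$\xi$ infimum is needed.
\end{itemize}
The paper's route is shorter on the page and makes visible that the only change from de Bouard--Debussche is dropping the weight at $t=T$.

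One caveat, shared with the paper: both arguments need $b>0$. The Gagliardo description of $H^b$ and the factor $1/(2b)$ from $\int_{-\infty}^0|t-t'|^{-1-2b}\,dt'$ degenerate at $b=0$. In fact the case $b=0$ allowed in the statement is false. For $f(t)=e^{iNt}$ on $[0,1]$, the one-sided seminorm with kernel $|t-t'|^{-1}$ grows like $\log N$, while $\|f\|_{L^2(0,1)}=1$. So the lemma should be read for $0<b<1/2$, which is all the paper uses later.
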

	\begin{proof}[\textbf{Proof}]
		We only need to show:
		\begin{align*}
			&\quad\int_0^T \left((1+t^{-2b})|f(t)|^2+\int_0^t\frac{|f(t)-f(t')|^2}{|t-t'|^{1+2b}}~dt'\right)dt\\
			&\sim \inf\{\|\tilde{f}\|_{H^b}^2:\tilde{f}|_{[0,T]} = f\}.
		\end{align*}
		By Lemma 2.1 in \cite{kdvmuti} we have  
		\begin{align*}
			&\quad\inf\{\|\tilde{f}\|_{H^b}^2:\tilde{f}|_{[0,T]} = f\}\\
			& \sim \int_{0}^{T}  \left((1+t^{-2b}+(T-t)^{-2b})|f(t)|^2+\int_{0}^{t}\frac{|f(t)-f(t')|^2}{|t-t'|^{1+2b}}~dt'\right)dt.
		\end{align*}
		Thus it is enough to obtain
		\begin{equation*}
			\int_{0}^{T}  (T-t)^{-2b}|f(t)|^2~dt\lesssim \int_0^T \left(T^{-2b}|f(t)|^2+\int_0^t\frac{|f(t)-f(t')|^2}{|t-t'|^{1+2b}}~dt'\right)dt.
		\end{equation*}
		By the scaling argument we only need to consider the case $T = 1$. Then by Lemma 2.1 in \cite{kdvmuti} and the inner description of Sobolev space we have
		\begin{align*}
			\int_{0}^1  (1-t)^{-2b}|f(t)|^2~dt &\lesssim \inf\{\|\tilde{f}\|^2_{H^b}:\tilde{f}|_{[0,1]} = f\}\\
			&\sim \int_0^1 \left(|f(t)|^2+\int_0^t\frac{|f(t)-f(t')|^2}{|t-t'|^{1+2b}}~dt'\right)dt.
		\end{align*}
		See for example \cite{triebel2010theory}, pages 208--209. We conclude the proof.
	\end{proof}
    
    In the proof of the local well-posedness for \eqref{mskdv}, we use the norm 
    \begin{equation}\label{defixsbt}
    	\|u\|_{Z^{s,b}_T}:=\|P_{1}u\|_{L_x^2C_T}+\|u\|_{X_T^{s,b}}+\|u\|_{C_TH_x^s}.
    \end{equation}
	\begin{rem}
		Let $u_0\in H^s$. It is easy to see that $e^{-t\partial_{xxx}}u_0\in Z^{s,b}_T$, $\forall~T>0$. 
		For any $u\in Z^{s,b}_T$, $\|u\|_{Z_t^{s,b}}$ is a continuous, non-decreasing function for $t\in [0,T]$.
	\end{rem}
	
	By the Duhamel formula, the mild solution of \eqref{mskdv} is
	\begin{equation}\label{mild}
		u(t)=U(t)u_0-\int_{0}^{t} U(t-t')(u(t')^2)_x~dt'+\int_{0}^{t}U(t-t') (u(t')\Phi dW_{t'}),
	\end{equation}
	where $U(t) = e^{-t\partial_{xxx}}$.
	
	Let $\theta\in C_0^\infty(\mathbb{R})$ be a radial, positive function with $\text{supp}\ \theta\subset[-2,2]$, $\theta(t) = 1$, $\forall~ t\in[-1,1]$.
	
	To prove the local well-posedness of $\eqref{mskdv}$, following the argument in \cite{kdvmuti} we consider the truncated equation:
	\begin{equation*}\tag{$2.7_R$}\label{truncated_eq}
		u(t)=U(t)u_0-\int_{0}^{t} U(t-t')(B_R(u)(t'))_x~dt'+\int_{0}^{t}U(t-t') (u(t')\Phi dW_{t'})
	\end{equation*}
	where
	$$B_R(u)(t) = \left(\theta(\|P_1u\|_{L_x^2C_t}/R)P_1u(t)+\theta(\|u\|^2_{X^{0,b}_t}/R^2)P_{>1}u(t)\right)^2.$$
	
	For any $K>0$, $u\in L^2_\omega Z_{T}^{0,b}$ adapted to the filtration $\{\mathcal{F}_t\}_{t\geq 0}$, we define the bounded stopping time
	\begin{equation*}
		\sigma^T_K(u)=\left\{
		\begin{aligned}
			&\mathrm{inf}\{t> 0: \|u\|_{Z^{0,b}_t}\geq K\}, & \|u\|_{Z^{0,b}_T}\geq K,\\
			&T, & \|u\|_{Z^{0,b}_T} < K,
		\end{aligned}
		\right.\quad \mathrm{a.s.}~\mathbb{P}.
	\end{equation*}
		
	\section{Linear and bilinear estimates}\label{estimate}
	In this section, we show some linear and bilinear estimates. Firstly we recall the classical estimates in Bourgain space. By Lemmas 3.11--3.12 in \cite{erdogan_tzirakis_2016}, we have
	\begin{lemma}\label{inhomoesti}
		Let $s\in \mathbb{R}$, $0<\epsilon<1$, $0<T<1$. Then
		\begin{equation}\label{restricted_norm_esti_1}
			\left\|\int_{0}^{t} U(t-t') f(t')  ~dt'\right\|_{X_T^{s,(1+\epsilon)/2}}\lesssim T^{\epsilon/2}\|f\|_{X_T^{s,\epsilon-1/2}}.
		\end{equation}
	\end{lemma}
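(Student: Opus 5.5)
The strategy is to reduce \eqref{restricted_norm_esti_1} to the classical global-in-time inhomogeneous estimate in $X^{s,b}$ (Lemmas 3.11--3.12 of \cite{erdogan_tzirakis_2016}) and then move between the global norm and the restricted norm on both sides. Set $b=(1+\epsilon)/2\in(1/2,1)$ and $b'=\epsilon-1/2\in(-1/2,1/2)$, and note that $1-b+b'=\epsilon/2$. Conjugating by $U(t)$ reduces everything to a one-dimensional problem in $t$ for each fixed $\xi$. With $w_F(t,\xi)=\langle\xi\rangle^s e^{-it\xi^3}\mathscr{F}_xf(t)(\xi)$, the Duhamel term has profile $W(t,\xi)=\int_0^t w_F(t',\xi)\,dt'$. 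This is just the time antiderivative of $w_F$, and it vanishes at $t=0$.

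\emph{Step 1 (extension of the right-hand side).} For $b'\le 0$ I interpret $X_T^{s,b'}$ as the restriction (infimum) norm. I pick an extension $\tilde f$ with $\|\tilde f\|_{X^{s,b'}}\le 2\|f\|_{X^{s,b'}_T}$ and replace it by $\chi_{[0,T]}(t)\tilde f$. Multiplication by $\chi_{[0,T]}$ is bounded on $H^{b'}_t$ uniformly in $T$ when $|b'|<1/2$: by duality this reduces to the classical fact that $\chi_{[0,T]}$ is a multiplier on $H^{-b'}_t$ for $-b'<1/2$. Since $\int_0^t$ only sees $f$ on $[0,t]\subset[0,T]$, the Duhamel integral is unchanged. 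For $b'>0$, Lemma \ref{lem:equa-norm} gives the same comparison directly.

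\emph{Step 2 (global estimate).} I apply the cited lemma with a time cutoff $\theta(t/T)$:
$$\Big\|\theta(t/T)\int_0^tU(t-t')F(t')\,dt'\Big\|_{X^{s,b}}\lesssim T^{1-b+b'}\|F\|_{X^{s,b'}}=T^{\epsilon/2}\|F\|_{X^{s,b'}},$$
valid for $-1/2<b'\le 0\le b\le b'+1$, $T<1$. For $\epsilon\ge1/2$, where $b'\ge0$, I would use the embedding $X^{s,b'}\hookrightarrow X^{s,b''}$ with a small negative $b''$. This costs only a smaller power of $T$ unless it is handled directly; the version in \cite{erdogan_tzirakis_2016} already covers $b'$ in that range.

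\emph{Step 3 (restriction norm on the left, the main obstacle).} I must show $\|W\|_{X^{s,b}_T}\lesssim\|\tilde W\|_{X^{s,b}}$ for any extension $\tilde W$ of the Duhamel term. Here $b>1/2$, so Lemma \ref{lem:equa-norm} does not apply, and the weight $t^{-2b}$ is not integrable at $0$. The Gagliardo double integral over $0<t'<t<T$ is trivially bounded by the full $\dot H^b(\mathbb{R})$ seminorm of $\tilde W(\cdot,\xi)$. For the weighted term I use that $W(0,\xi)=0$ and apply the fractional Hardy inequality
$$\int_0^\infty t^{-2b}|g(t)|^2\,dt\lesssim\|g\|_{\dot H^b}^2,\qquad g(0)=0,\quad 1/2<b<3/2,$$
to $g=\tilde W(\cdot,\xi)$, which vanishes at $t=0$ since $\tilde W$ is continuous in $t$ because $b>1/2$. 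Integrating in $\xi$ and combining with Steps 1--2 gives \eqref{restricted_norm_esti_1}. I expect this Hardy step to be the delicate point: one must check that the constant is uniform in $b$ on the relevant range, i.e.\ that $b$ stays away from $1/2$, which holds since $\epsilon>0$ is fixed. The $L^2$ part with weight $1$ is trivially controlled.
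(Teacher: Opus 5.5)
Your overall route matches the paper's. Its proof of this lemma is just the citation of Lemmas 3.11--3.12 of Erdo\u{g}an--Tzirakis. Your Steps 1 and 3 correctly supply the passage between restriction norms and the paper's intrinsic $X^{s,b}_T$ norm, which the paper leaves implicit. In particular, using $W(0,\xi)=0$ together with the fractional Hardy inequality is the right way to control the non-integrable weight $t^{-2b}$, $b>1/2$, on the left-hand side. For $0<\epsilon\le 1/2$ your argument is complete.

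The gap is the range $1/2<\epsilon<1$, where $b'=\epsilon-1/2>0$. The inhomogeneous estimate you quote in Step 2 requires $b'\le 0$, and your fallback loses exactly what the statement asserts. The embedding $X^{s,b'}\hookrightarrow X^{s,b''}$ carries no power of $T$, so with $b''\le 0$ you get at best $T^{1-b+b''}\le T^{1-b}=T^{(1-\epsilon)/2}$. For $T<1$ and $\epsilon>1/2$ this is strictly weaker than $T^{\epsilon/2}$. Saying that the cited version ``already covers'' this range is not a proof.

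The missing idea is that restricting to $[0,T]$ gains a power of $T$ when you lower the modulation exponent. For $0\le b'<1/2$, with $w_F$ the profile of $F$,
\[
\|\chi_{[0,T]}F\|_{X^{s,0}}^2=\int_{\mathbb{R}}\int_0^T|w_F|^2\,dt\,d\xi\le T^{2b'}\int_{\mathbb{R}}\int_0^T t^{-2b'}|w_F|^2\,dt\,d\xi\lesssim T^{2b'}\|F\|_{X^{s,b'}}^2 .
\]
The last step is the Hardy inequality $\int_{\mathbb{R}}|t|^{-2b'}|g|^2\,dt\lesssim\|g\|_{\dot H^{b'}}^2$, valid for $b'<1/2$. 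Equivalently, read it off the $t^{-2b'}$ weight in the paper's definition of $X^{s,b'}_T$.

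You already replace $\tilde f$ by $\chi_{[0,T]}\tilde f$ in Step 1. So apply the inhomogeneous estimate with exponent $0$ on the right and $b=(1+\epsilon)/2\le 1$ on the left. This gives $T^{(1-\epsilon)/2}\cdot T^{\epsilon-1/2}=T^{\epsilon/2}$, and your Step 3 then finishes exactly as before. The paper only uses the lemma with $\epsilon<1/4$, so this range does not affect its later arguments, but it is part of the statement as written.
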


	In the spirit of Lemma 2.2 in \cite{kdvmuti}, we have the following nonlinear estimate.
	\begin{lemma}\label{truncated_equation}
		Let $s\geq 0$, $b\in[0,1/2)$. For any $R>0$, $T>0$, $u, v\in X^{s,b}_T$, we have
		$$ 
		\|\theta(\|u\|^2_{X^{0,b}_t}/R^2)u\|_{X^{s,b}_T}\lesssim \|u\|_{X^{s,b}_T},\ 	\|\theta(\|u\|^2_{X^{0,b}_t}/R^2)u\|_{X^{0,b}_T}\lesssim R,
		$$
		$$
		\|\theta(\|u\|^2_{X^{0,b}_t}/R^2)u-\theta(\|v\|^2_{X^{0,b}_t}/R^2)v\|_{X^{0,b}_T}\lesssim \|u-v\|_{X^{0,b}_T}.
		$$
	\end{lemma}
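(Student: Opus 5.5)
Write $\phi(t)=\theta(\|u\|^2_{X^{0,b}_t}/R^2)$. By the Remark after \eqref{defixsbt}, $t\mapsto\|u\|_{X^{0,b}_t}$ is continuous and non-decreasing, so $\phi$ is a continuous non-increasing function of $t$ with values in $[0,1]$. Let $t_R=\sup\{t\in[0,T]:\|u\|_{X^{0,b}_t}\le \sqrt2 R\}$; then $\phi=0$ on $(t_R,T]$. The whole argument works directly with the intrinsic norm $\|\cdot\|_{X^{s,b}_T}$. Since $\phi$ depends only on $t$, we have $w_{\phi u}(t,\xi)=\phi(t)w_u(t,\xi)$, so every estimate reduces to a one-dimensional statement in $t$ for each fixed $\xi$, followed by integration in $\xi$.

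For the first bound, the weighted $L^2$ part is trivial because $|\phi|\le1$. For the Gagliardo part we split
\[
\phi(t)w(t)-\phi(t')w(t')=\phi(t)\bigl(w(t)-w(t')\bigr)+\bigl(\phi(t)-\phi(t')\bigr)w(t').
\]
The first piece is bounded by the corresponding term of $\|u\|_{X^{s,b}_T}$. The second piece is the main obstacle, because $\phi$ is only continuous and monotone, not Lipschitz in $t$. I would use monotonicity and $0\le\phi\le1$ to get
\[
\int_0^T\!\int_0^t \frac{|\phi(t)-\phi(t')|^2|w(t')|^2}{|t-t'|^{1+2b}}\,dt'dt\le \int_0^T|w(t')|^2\int_{t'}^{T}\frac{\min(1,|\phi(t)-\phi(t')|)}{|t-t'|^{1+2b}}\,dt\,dt'.
\]
If $\phi$ is not Lipschitz, this crude inner integral diverges. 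The safer route is therefore the trick in Lemma 2.2 of \cite{kdvmuti}. Since $0\le\phi(t')-\phi(t)\le \phi(t')$ for $t\ge t'$, the factor $\phi(t')\cdot\mathbf 1_{t\ge t_R}$ has a genuine jump only at $t_R$. The difference term is then controlled by $\int_0^{t_R}(t_R-t')^{-2b}|w(t')|^2dt'$ plus a regular part. That quantity is bounded using the $(T-t)^{-2b}$ Hardy-type inequality already established in the proof of Lemma \ref{lem:equa-norm}, applied on $[0,t_R]$, which is valid since $b<1/2$. Concretely, I would first replace $\phi$ by the indicator-like comparison $\phi(t)\le \mathbf 1_{[0,t_R]}(t)$. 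Then I would estimate the contribution of the set where $\phi$ varies through $\theta'$ and the Lipschitz bound $|\|u\|^2_{X^{0,b}_t}-\|u\|^2_{X^{0,b}_{t'}}|\le \int_{t'}^{t}(\dots)$, where the integrand is the $t$-density of the norm. If this fails, I will reproduce the argument of \cite{kdvmuti}, Lemma 2.2, verbatim, using $\|\cdot\|_{X^{s,b}_T}$ in place of their norm.

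The second bound, $\lesssim R$, follows by applying the first bound with $s=0$ on the interval $[0,\min(t_R,T)]$, together with $\|u\|_{X^{0,b}_{t_R}}\le\sqrt2R$. On $(t_R,T]$ the function vanishes, and the gluing cost is exactly the jump term handled above.

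For the Lipschitz bound, I would assume without loss of generality that $t_R(u)\le t_R(v)$ and write
\[
\phi_uu-\phi_vv=\phi_u(u-v)+(\phi_u-\phi_v)v.
\]
The first term is bounded by $\|u-v\|_{X^{0,b}_T}$ through the argument of the first bound. For the second, $|\phi_u(t)-\phi_v(t)|\lesssim R^{-2}\bigl|\|u\|^2_{X^{0,b}_t}-\|v\|^2_{X^{0,b}_t}\bigr|\lesssim R^{-1}\|u-v\|_{X^{0,b}_T}$ on the support, where both norms are $\lesssim R$. Combined with $\|v\mathbf 1_{[0,t_R(v)]}\|_{X^{0,b}}\lesssim R$, this gives the claim.
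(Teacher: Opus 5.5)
Your plan has a genuine gap at the one nontrivial step of the first bound, and the argument for the Lipschitz bound is not valid. In the first bound, the term $(\phi(t)-\phi(t'))w(t')$ with $t'<t\le t_R$ is never estimated. The comparison $\phi\le\mathbf{1}_{[0,t_R]}$ gives no information on $\phi(t)-\phi(t')$ when both times lie below $t_R$. The Hardy inequality at the single time $t_R$ only handles pairs with $t>t_R$. So the ``regular part'' is the whole difficulty. You name the right ingredient, the $t$-density of the norm, but do not show how it closes the estimate. Your fallback, copying Lemma 2.2 of \cite{kdvmuti}, is exactly the argument that Remark \ref{rem:why_wrong} shows breaks down for $b\ge 1/4$, i.e.\ in the entire range where the lemma is needed.

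The missing step is as follows. Write $\|u\|^2_{X^{0,b}_t}=\int_0^t D(t_1)\,dt_1$, where $D\ge 0$ is the $t$-integrand of the definition integrated over $\xi$. Then $|\phi(t)-\phi(t')|^2\le|\phi(t)-\phi(t')|\lesssim R^{-2}\int_{t'}^{t}D(t_1)\,dt_1$. Do \emph{not} estimate the inner $t$-integral for fixed $t'$; that is where your divergence comes from. Instead use Fubini and $\int_{t_1}^{t_R}(t-t')^{-1-2b}\,dt\lesssim(t_1-t')^{-2b}$. Then apply the Hardy-type inequality from the proof of Lemma \ref{lem:equa-norm} at every $t_1$; its constant is uniform in $t_1$. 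This gives
\begin{equation*}
\frac{1}{R^{2}}\int_0^{t_R}D(t_1)\int_{\mathbb{R}}\int_0^{t_1}\frac{|w(t',\xi)|^2}{(t_1-t')^{2b}}\,dt'\,d\xi\,dt_1\lesssim\frac{\|u\|^2_{X^{0,b}_{t_R}}}{R^{2}}\,\|u\|^2_{X^{s,b}_T}\lesssim\|u\|^2_{X^{s,b}_T},
\end{equation*}
which is the paper's proof. Your jump intuition could also be made rigorous: write the continuous, bounded-variation function $\phi$ as a superposition of indicators $\mathbf{1}_{[0,r)}$. Each is a multiplier on $X^{s,b}_T$ bounded uniformly in $r$ by the same Hardy inequality. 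But that superposition step is not in your plan either.

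For the Lipschitz bound, $\sup_t|\phi_u(t)-\phi_v(t)|\lesssim R^{-1}\|u-v\|_{X^{0,b}_T}$ together with $\|v\mathbf{1}_{[0,t_R(v)]}\|_{X^{0,b}}\lesssim R$ does not give the claim. A function of $t$ is not a multiplier on $X^{0,b}_T$ by virtue of its sup norm. With $\psi=\phi_u-\phi_v$, the Gagliardo part of $\|\psi v\|_{X^{0,b}_T}$ contains $\int_{\mathbb{R}}\int_0^T\int_0^t|\psi(t)-\psi(t')|^2|w_v(t',\xi)|^2|t-t'|^{-1-2b}\,dt'\,dt\,d\xi$. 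So you need the increments of $\psi$ to carry a factor of $u-v$. The paper (with $R=1$) sets $h(t)=\|u\|^2_{X^{0,b}_t}-\|v\|^2_{X^{0,b}_t}$ and expands $\psi(t)-\psi(t')$ through $\theta'$ to get
\begin{equation*}
|\psi(t)-\psi(t')|^2\lesssim|h(t)-h(t')|^2+|h(t')|^2\bigl(\|u\|^2_{X^{0,b}_t}-\|u\|^2_{X^{0,b}_{t'}}+\|v\|^2_{X^{0,b}_t}-\|v\|^2_{X^{0,b}_{t'}}\bigr).
\end{equation*}
It then uses $|h(t')|\lesssim\|u-v\|_{X^{0,b}_T}$. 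By Cauchy--Schwarz, $|h(t)-h(t')|^2\lesssim(\|u\|_{X^{0,b}_t}+\|v\|_{X^{0,b}_t})^2\int_{t'}^{t}D_{u-v}(t_1)\,dt_1$, where $D_{u-v}$ is the density of $\|u-v\|^2_{X^{0,b}_t}$. The same Fubini--Hardy step then closes the estimate.

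Note also that with your ordering $t_R(u)\le t_R(v)$, $\|u\|_{X^{0,b}_t}$ need not be $\lesssim R$ on the support of $\phi_v$. So your claim that both norms are $\lesssim R$ on the support requires the paper's case distinction. If one norm is much larger than the other, then $\|u-v\|_{X^{0,b}_T}\gtrsim R$, and the Lipschitz bound follows from the second estimate.
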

	We would prove this lemma in Appendix \ref{truncated}.
	\begin{rem}\label{rem:why_wrong}		
		It seems that the argument for proving Lemma 2.2 in \cite{kdvmuti} only works for $0\leq b<1/4$. In fact let $v(t) = \eta(t)U(t)u_0$, $\eta|_{[-1,1]} = 1$, $0\neq u_0\in C_0^\infty$, $\mathscr{F}_x u_0(0) = 0$, $w(t) = U(-t)v(t) = \eta(t)u_0$. Then $\|\chi_{[r,t]}v\|_{X^{s,b}}\sim \|\chi_{[r,t]}\|_{H^b}\sim (t-r)^{1/2-b}$, $0<r<t<1$. Thus for any $T>0$ one has
		\begin{align*}
			\int_{\mathbb{R}}\int_0^T\int_0^t \|\chi_{[r,t]}v\|_{X^{s,b}}^2\frac{|\mathscr{F}_xw(t,\xi)|^2}{|t-r|^{1+2b}}~drdtd\xi\gtrsim \int_0^{\min\{T,1\}}\int_0^t |t-r|^{-4b}~drdt.
		\end{align*}
		For $b\geq 1/4$, the right hand is infinite. Thus the control of term $II$ in their proof is not complete.
	\end{rem}

	\subsection{Bilinear estimates}
	We have the following bilinear estimates which would be useful in the proof of local well-posedness. For pioneering works, see \cite{bourgain1993fourier}, \cite{kenig1996bilinear}, Proposition 2.2 in \cite{de1999white}, and \cite{guo2009global}.
	
	\begin{lemma}\label{bilinearestforkdv}
		Let $1/4< a, b < 1/2$, $a+2b\geq 5/4$, and $0<T<1$. We have
		\begin{equation*}
			\begin{aligned}
				\|(uv)_x\|_{X_T^{0,-a}}&\lesssim \|P_1 u\|_{L_T^\infty L_x^2}\|P_1v\|_{L_{T}^2L_x^\infty}+ \|P_1 u\|_{L_x^2L_T^\infty}\|P_{>1}v\|_{X_T^{0,b}} \\
				&\quad +\|P_{>1}u\|_{X_T^{0,b}}\|P_1 v\|_{L_x^2L_T^\infty}+ \|P_{>1}u\|_{X_T^{0,b}}\|P_{>1}v\|_{X_T^{0,b}},
			\end{aligned}
		\end{equation*}
		which implies
		\begin{equation*}
			\|(uv)_x\|_{X_T^{0,-a}}\lesssim \|u\|_{Z_T^{0,b}}\|v\|_{X_T^{0,b}}+\|u\|_{X_T^{0,b}}\|v\|_{Z_T^{0,b}}.
		\end{equation*}
	\end{lemma}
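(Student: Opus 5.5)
We prove the bilinear estimate first for global functions and then pass to the restricted spaces with Lemma \ref{lem:equa-norm}. Take extensions $\tilde u,\tilde v$ of $u,v$ to $\mathbb{R}\times\mathbb{R}$ that nearly attain the infimum in Lemma \ref{lem:equa-norm}. The extension should commute with $P_1$ and $P_{>1}$ and should not increase the $L_x^2L_T^\infty$, $L_T^\infty L_x^2$ and $L_T^2L_x^\infty$ norms of the low-frequency parts. A concrete choice is to multiply by $\chi_{[0,T]}(t)$ in the low-frequency pieces; this is harmless because those pieces are never measured in $X^{s,b}$ on the right-hand side. The $X^{0,-a}_T$ norm on the left is a restriction norm, so it is bounded by the $X^{0,-a}$ norm of $(\tilde u\tilde v)_x$. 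Write $uv=P_1u P_1v+P_1uP_{>1}v+P_{>1}uP_1v+P_{>1}uP_{>1}v$ and treat the four pieces separately.

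\emph{Low $\times$ low.} $P_1uP_1v$ has frequencies in $[-2,2]$, so the derivative costs only a constant. Since $a>0$, $\|\cdot\|_{X^{0,-a}}\le\|\cdot\|_{L^2_{t,x}}$. Hölder on $[0,T]$ then gives $\|P_1uP_1v\|_{L^2_TL^2_x}\le\|P_1u\|_{L^\infty_TL^2_x}\|P_1v\|_{L^2_TL^\infty_x}$.

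\emph{High $\times$ high.} This is the classical Kenig--Ponce--Vega/Bourgain estimate $\|\partial_x(fg)\|_{X^{0,-a}}\lesssim\|f\|_{X^{0,b}}\|g\|_{X^{0,b}}$ for functions supported at frequencies $|\xi|\ge1$. With $b<1/2$ it is precisely the version in \cite{de1999white}, Proposition 2.2, and the frequency restriction $|\xi_1|,|\xi_2|\geq 1$ removes the low-frequency obstruction that otherwise requires $\dot H^{-3/8}$. Concretely, one argues by duality with a test function $h\in X^{0,a}$ and uses the resonance identity $\tau-\xi^3-(\tau_1-\xi_1^3)-(\tau_2-\xi_2^3)=-3\xi\xi_1\xi_2$. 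In the region where $\max$ of the three modulations is $\gtrsim|\xi\xi_1\xi_2|$, one gains $|\xi\xi_1\xi_2|^{\min(a,b)}$ against the derivative $|\xi|$. The remaining integrals are bounded by Cauchy--Schwarz and elementary estimates of $\int\langle\tau-\cdots\rangle^{-2b}$-type expressions, and the condition $a+2b\ge5/4$ (with $a,b>1/4$) is what makes these integrals converge.

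\emph{High $\times$ low.} This piece is where the new norm enters, and it is the main obstacle. Say $u$ is at frequency $\le1$ and $v$ at frequency $>1$, so the output has $|\xi|\sim|\xi_2|\gtrsim1$ with possibly small $|\xi_1|$. Then $|\xi\xi_1\xi_2|\sim|\xi_1|\xi^2$ gives no gain when $\xi_1\to0$, and this is why $X^{s,b}$ alone fails. I would follow Guo \cite{guo2009global}. By duality, bound $\int (P_1u)\,\partial_x(P_{>1}v)\,\bar h$ with $\|h\|_{X^{0,a}}\le1$. Decompose $P_{>1}v$ and $h$ dyadically as $P_N$, with comparable $N$ for both. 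Use Hölder in the form $L^2_xL^\infty_t\times L^\infty_xL^2_t\times L^\infty_xL^2_t$ together with the Kato smoothing estimate $\|D^{1/2}P_Ne^{-t\partial_{xxx}}\phi\|_{L^\infty_xL^2_t}\lesssim\|\phi\|_{L^2}$. Transferred to $X^{0,b'}$ with $b'>1/2$, this gives $\|P_N f\|_{L^\infty_xL^2_t}\lesssim N^{-1}\|f\|_{X^{0,b'}}$. Since we only have $b,a<1/2$, we split by modulation. If the modulation of $v$ or $h$ is $\gtrsim N^2|\xi_1|$, we use $L^2$ bounds and gain powers of $N$. Otherwise both sit near the curve, and a smoothing estimate at $b<1/2$ is obtained by interpolating the $b'>1/2$ smoothing with the trivial $L^2$ bound, losing $N^{1-2b}$-type factors. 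The derivative $N$ must be absorbed by $N^{-1/2}\cdot N^{-1/2}$ from two smoothing factors plus modulation gains, and the power-counting should close using $a+2b\ge5/4$. Summing in $N$ uses almost orthogonality, since $P_1u$ only shifts frequency by $\le1$. I expect this dyadic bookkeeping, done with sub-$1/2$ exponents, to be the delicate part.

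Finally, the second inequality follows from the first. By definition, $\|P_1u\|_{L^\infty_TL^2_x}\le\|u\|_{C_TL^2}\le\|u\|_{Z^{0,b}_T}$ and $\|P_1u\|_{L^2_xL^\infty_T}\le\|u\|_{Z^{0,b}_T}$. For the $L^2_TL^\infty_x$ factor, Bernstein gives $\|P_1v\|_{L^2_TL^\infty_x}\lesssim\|P_1v\|_{L^2_TL^2_x}\lesssim\|v\|_{X^{0,b}_T}$ because $T<1$ and $b\ge0$. Also $\|P_{>1}v\|_{X^{0,b}_T}\lesssim\|v\|_{X^{0,b}_T}$, since $P_{>1}$ is a Fourier multiplier in $x$ acting on $w(t,\xi)$ pointwise in $\xi$.
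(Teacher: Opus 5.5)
Your extension step, the low$\times$low bound and the deduction of the second inequality are fine and agree with the paper. The gap is in the high$\times$low piece, which is the heart of the lemma.

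\textbf{The H\"older step fails.} The splitting $L^2_xL^\infty_t\times L^\infty_xL^2_t\times L^\infty_xL^2_t$ is not valid on $\mathbb{R}_x$. The $t$-exponents sum to $1$, but the $x$-exponents sum only to $\frac12$, so it cannot bound $\int\!\!\int P_1u\,\partial_xP_{>1}v\,\bar h\,dx\,dt$. Once $P_1u$ is placed in $L^2_xL^\infty_t$, one of the other two factors must go in $L^2_{t,x}$. So only one local-smoothing factor is available at a time. (Also, for the Airy group Kato smoothing gains a full derivative, $\|\partial_xe^{-t\partial_{xxx}}\phi\|_{L^\infty_xL^2_t}\lesssim\|\phi\|_{L^2}$; your $D^{1/2}$ version does not give the $N^{-1}$ you then use.)

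\textbf{The modulation split does not rescue it.} $P_1u$ is controlled only in $L^2_xL^\infty_t$, so it carries no modulation information. The resonance relation therefore cannot force $v$ or $h$ to have large modulation, and in any case $N^2|\xi_1|$ can be $\lesssim1$, so there is no uniform power of $N$ to gain. With a single smoothing factor at exponent $b<\frac12$, interpolating against the trivial bound leaves $N\cdot N^{1/2-3b}$, which grows in $N$.

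\textbf{The missing idea} is to split the modulation weight between $v$ and the test function by bilinear interpolation. The paper proves two endpoint bounds.
\begin{itemize}
\item Keep the derivative on the product. Bernstein for $\partial_xP_1u$ in $L^2_xL^\infty_t$ plus Kato smoothing give $\bigl|\int(P_1uP_{>1}v)_xw\bigr|\le\|(P_1uP_{>1}v)_x\|_{L^2_{t,x}}\|w\|_{L^2_{t,x}}\lesssim\|P_1u\|_{L^2_xL^\infty_t}\|v\|_{X^{0,1/2+\epsilon}}\|w\|_{X^{0,0}}$.
\item Integrate by parts and use $L^1_xL^2_t\times L^\infty_xL^2_t$. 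This gives a bound $\lesssim\|P_1u\|_{L^2_xL^\infty_t}\|v\|_{X^{0,0}}\|w\|_{X^{0,1/2+\epsilon}}$.
\end{itemize}
Complex interpolation in $(v,w)$ then yields the bound on $X^{0,b}\times X^{0,a}$ whenever $a+b\ge\frac12+\epsilon$. This holds since $a,b>\frac14$. No dyadic or modulation decomposition is needed for this piece.

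For high$\times$high, citing \cite{de1999white} is a different route from the paper. The paper argues directly with the dyadic bounds of Lemma \ref{multiesti}, plus $X^{0,2/9}\hookrightarrow L^3_{t,x}$ for low output frequency. The condition $a+2b\ge\frac54$ enters when $2\le N_1\ll N_2\sim N_3$ and $L_1\sim N_1N_2^2$ dominates. If you keep the citation, check that the cited weighted estimate really covers the full range $\frac14<a,b<\frac12$, $a+2b\ge\frac54$.
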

	\begin{proof}[\textbf{Proof}]
		We only need to show
		\begin{align}
			\|(P_1uP_1v)_x\|_{X_T^{0,-a}} &\lesssim \|P_1 u\|_{L_T^\infty L_x^2}\|P_1v\|_{L_{T}^2L_x^\infty}, \label{lowlow}\\
			\|(P_1uP_{>1}v)_x\|_{X_T^{0,-a}} &\lesssim \|P_1 u\|_{ L_x^2L_T^\infty}\|P_{>1}v\|_{X_T^{0,b}},\label{lowhigh}\\
			\|(P_{>1}uP_{>1}v)_x\|_{X_T^{0,-a}} &\lesssim \|P_{>1} u\|_{X^{0,b}_T}\|P_{>1}v\|_{X^{0,b}_T}.\label{highhigh}
		\end{align}
		For \eqref{lowlow}, by the H\"{o}lder inequality we have
		\begin{align*}
			\|(P_1uP_1v)_x\|_{X_T^{0,-a}}&\lesssim \|(P_1uP_1v)_x\|_{L_T^2L^2_x}\lesssim \|P_1uP_1v\|_{L_T^2L^2_x}\\
			&\lesssim \|P_1u\|_{L_T^\infty L_x^2}\|P_1v\|_{L_T^2L_x^\infty}.
		\end{align*}
		By the extension and the duality, for \eqref{lowhigh}--\eqref{highhigh} we only need to show
		\begin{align}
			\left|\int_{\mathbb{R}^2}(P_1uP_{>1}v)_xw~dxdt\right|&\lesssim \|P_1u\|_{L_x^2L_t^\infty}\|v\|_{X^{0,b}}\|w\|_{X^{0,a}},\label{duallowhigh}\\
			\left|\int_{\mathbb{R}^2}(P_{>1}uP_{>1}v)_xw~dxdt\right|&\lesssim \|u\|_{X^{0,b}}\|v\|_{X^{0,b}}\|w\|_{X^{0,a}}.\label{dualhighhigh}
		\end{align}
		By the Bernstein inequality, one has $\|\partial_xP_1 u\|_{L_x^2L_t^\infty}\lesssim \|P_1u\|_{L_x^2L_t^\infty}$. Then by the H\"{o}lder inequality and the local smoothing estimate, for any $\epsilon>0$ we have
		\begin{equation}\label{lowhighv}
			\begin{aligned}
				&\quad\left|\int_{\mathbb{R}^2}(P_1uP_{>1}v)_xw~dxdt\right|\\
				&\leq \|(P_1uP_{>1}v)_x\|_{L_{t,x}^2}\|w\|_{L^2_{t,x}}\\
				&\lesssim (\|P_1u\|_{L^2_xL^\infty_t}\|\partial_xP_{>1}v\|_{L^\infty_xL^2_t}+\|\partial_xP_1 v\|_{L_x^2L_t^\infty}\|P_{>1}v\|_{L_x^\infty L_t^2})\|w\|_{X^{0,0}}\\
				&\lesssim\|P_1u\|_{L_x^2L_t^\infty}\|P_{>1}v\|_{X^{0,1/2+\epsilon}}\|w\|_{X^{0,0}}\\
				&\lesssim\|P_1u\|_{L_x^2L_t^\infty}\|v\|_{X^{0,1/2+\epsilon}}\|w\|_{X^{0,0}}.
			\end{aligned}			
		\end{equation}
		On the other hand, we also have
		\begin{equation}\label{P1P>1_2}
			\begin{aligned}
				\left|\int_{\mathbb{R}^2} (P_1uP_{>1}v)_x w~dxdt\right|& = \left|\int_{\mathbb{R}^2} (P_1uP_{>1}v) \partial_xw~dxdt\right|\\
				&\leq \|P_1uP_{>1}v\|_{L_x^1 L_t^2}\|\partial_xw\|_{L_x^\infty L_t^2}\\
				&\lesssim \|P_1 u\|_{L_x^2L_t^\infty}\|P_{>1}v\|_{L_{t,x}^2} \|w\|_{X^{0,1/2+\epsilon}}\\
				&\lesssim \|P_1 u\|_{L_x^2L_t^\infty}\|v\|_{X^{0,0}} \|w\|_{X^{0,1/2+\epsilon}}.
			\end{aligned}
		\end{equation}
		Then for any $a,b>1/4$, combining \eqref{lowhighv} and \eqref{P1P>1_2}, by the interpolation \cite{calderon} we obtain \eqref{duallowhigh}.
		
		By the refined Strichartz estimate $X^{0,1/3}\hookrightarrow L_{t,x}^4$ (c.f. Proposition 6.4 \cite{tao2001multilinear} for the result on the torus) and the interpolation, we have $X^{0,2/9}\hookrightarrow L_{t,x}^3$. See for example Theorem 3.18 \cite{erdogan_tzirakis_2016}. By the H\"{o}lder inequality, for $a,b\geq 2/9$ one has
		\begin{align*}
			\left|\int_{\mathbb{R}^2}(P_{>1}uP_{>1}v)_xP_1w~dxdt\right|&\leq \|P_{>1}u\|_{L_{t,x}^3}\|P_{>1}v\|_{L_{t,x}^3}\|\partial_xP_1w\|_{L_{t,x}^3}\\
			&\lesssim \|u\|_{X^{0,b}}\|v\|_{X^{0,b}}\|w\|_{X^{0,a}}.
		\end{align*}
		Let $Q_1u = \mathscr{F}^{-1}_{t,x}(\chi_{[-1,1]}(\tau-\xi^3)\hat{u}(\tau,\xi))$ and
		\begin{equation}\label{modulationdeco}
			Q_Lu = \mathscr{F}^{-1}_{t,x}(\chi_{[-L,L]\setminus [-L/2,L/2]}(\tau-\xi^3)\hat{u}(\tau,\xi)),\quad L\in 2^{\mathbb{N}}.
		\end{equation}
		We use $a_{\min}$, $a_{\mathrm{med}}$, $a_{\max}$ to denote the minimum, medium, maximum of $a_1$, $a_2$, $a_3$. Note that for $L_{\max}\leq N_1N_2N_3/10$ or $N_{\mathrm{med}}\leq N_{\max}/4$ one has
		$$\int_{\mathbb{R}^2}(Q_{L_1}P_{N_1}uQ_{L_2}P_{N_2}v)_xQ_{L_3}P_{N_3}w~dxdt = 0.$$
		Now we would assume $N_{\min}\geq 2$, $L_{\max}\sim \max\{N_1N_2N_3,L_{\mathrm{med}}\}$, $N_{\mathrm{med}}\sim N_{\max}$. 
		
		If $N_{\min}\sim N_{\max}$, by Lemma \ref{multiesti} $(c)$, we have
		\begin{align*}
			&\quad\left|\int_{\mathbb{R}^2}(Q_{L_1}P_{N_1}uQ_{L_2}P_{N_2}v)_xQ_{L_3}P_{N_3}w~dxdt\right|\\
			&\lesssim N_3L_{\min}^{1/2}L_{\mathrm{med}}^{1/4}N_{\max}^{-1/4}\|Q_{L_1}P_{N_1}u\|_{L^2_{t,x}}\|Q_{L_2}P_{N_2}v\|_{L^2_{t,x}}\|Q_{L_3}P_{N_3}w\|_{L^2_{t,x}}\\
			&\lesssim L_{\max}^{1/4}L_{\mathrm{med}}^{1/4}L_{\min}^{1/2}(L_1L_2)^{-b}L_3^{-a}\|u\|_{X^{0,b}}\|v\|_{X^{0,b}}\|w\|_{X^{0,a}}.
		\end{align*}
		Then for $a,b>1/4$, $a+2b>1$ one has
		\begin{align*}
			&\quad\sum_{L_{\max}\gtrsim N_{\max}^3,N_{\min}\sim N_{\max}\geq 2}\left|\int_{\mathbb{R}^2}(Q_{L_1}P_{N_1}uQ_{L_2}P_{N_2}v)_xQ_{L_3}P_{N_3}w~dxdt\right|\\
			&\lesssim \sum_{L_{\max}\gtrsim N_{\max}^3,N_{\min}\sim N_{\max}}L_{\max}^{1/4}L_{\mathrm{med}}^{1/4}L_{\min}^{1/2}(L_1L_2)^{-b}L_3^{-a}\|u\|_{X^{0,b}}\|v\|_{X^{0,b}}\|w\|_{X^{0,a}}\\
			&\lesssim \|u\|_{X^{0,b}}\|v\|_{X^{0,b}}\|w\|_{X^{0,a}}.
		\end{align*}
		If $N_3 = N_{\min}\ll N_{\max}$, by Lemma \ref{multiesti} $(b)$, we have
		\begin{align*}
			&\quad\left|\int_{\mathbb{R}^2}(Q_{L_1}P_{N_1}uQ_{L_2}P_{N_2}v)_xQ_{L_3}P_{N_3}w~dxdt\right|\\
			&\lesssim N_3L_{\mathrm{med}}^{1/2}L_{\min}^{1/2}N_{\max}^{-1/2}N_3^{-1/2}\|Q_{L_1}P_{N_1}u\|_{L^2_{t,x}}\|Q_{L_2}P_{N_2}v\|_{L^2_{t,x}}\|Q_{L_3}P_{N_3}w\|_{L^2_{t,x}}\\
			&\lesssim L_{\mathrm{med}}^{1/2}L_{\min}^{1/2}(L_1L_2)^{-b}L_3^{-a}\|u\|_{X^{0,b}}\|v\|_{X^{0,b}}\|w\|_{X^{0,a}}.
		\end{align*}
		Then for $a,b>1/4$, $a+2b>1$ one has
		\begin{align*}
			&\quad\sum_{L_{\max}\gtrsim N_3N_1^2, N_3 = N_{\min}\geq 2}\left|\int_{\mathbb{R}^2}(Q_{L_1}P_{N_1}uQ_{L_2}P_{N_2}v)_xQ_{L_3}P_{N_3}w~dxdt\right|\\
			&\lesssim \sum_{L_{\max}\gtrsim N_3N_1^2, N_3 = N_{\min}}L_{\mathrm{med}}^{1/2}L_{\min}^{1/2}(L_1L_2)^{-b}L_3^{-a}\|u\|_{X^{0,b}}\|v\|_{X^{0,b}}\|w\|_{X^{0,a}}\\
			&\lesssim \|u\|_{X^{0,b}}\|v\|_{X^{0,b}}\|w\|_{X^{0,a}}.
		\end{align*}
		For $N_3\neq N_{\min}$, without loss of generality we would assume $N_1 = N_{\min}$. If $L_1 \neq
		L_{\max}$ or $L_{\mathrm{med}}\sim L_{\max}$, the same argument as for $N_3 = N_{min}$ also works. Thus we would assume $N_2\sim N_3\gg N_1$, $L_1\sim N_1N_2^2\gg \max\{L_2,L_3\}$. By using Lemma \ref{multiesti} $(b)$ again we obtain
		\begin{align*}
			&\quad\left|\int_{\mathbb{R}^2}(Q_{L_1}P_{N_1}uQ_{L_2}P_{N_2}v)_xQ_{L_3}P_{N_3}w~dxdt\right|\\
			&\lesssim N_3L_2^{1/2}L_3^{1/2}N_3^{-1/2}N_1^{-1/2}\|Q_{L_1}P_{N_1}u\|_{L^2_{t,x}}\|Q_{L_2}P_{N_2}v\|_{L^2_{t,x}}\|Q_{L_3}P_{N_3}w\|_{L^2_{t,x}}\\
			&\lesssim N_3^{1/2}N_1^{-1/2}L_2^{1/2}L_3^{1/2}(L_1L_2)^{-b}L_3^{-a}\|u\|_{X^{0,b}}\|P_{N_2}v\|_{X^{0,b}}\|P_{N_3}w\|_{X^{0,a}}.
		\end{align*}
		Thus for $a,b<1/2$, $1/2+2(1-2b-a)\leq 0$ which means $a+2b\geq 5/4$ we have
		\begin{align*}
			&\quad\sum_{2\leq N_1\ll N_2\sim N_3}\sum_{L_2,L_3\ll L_1\sim N_1N_2^2}\left|\int_{\mathbb{R}^2}(Q_{L_1}P_{N_1}uQ_{L_2}P_{N_2}v)_xQ_{L_3}P_{N_3}w~dxdt\right|\\
			&\lesssim \sum_{N_1\ll N_2\sim N_3}N_3^{1/2}N_1^{-1/2}(N_1N_2^2)^{1-2b-a}\|u\|_{X^{0,b}}\|P_{N_2}v\|_{X^{0,b}}\|P_{N_3}w\|_{X^{0,a}}\\
			&\lesssim\sum_{N_2\sim N_3}\|u\|_{X^{0,b}}\|P_{N_2}v\|_{X^{0,b}}\|P_{N_3}w\|_{X^{0,a}}\\
			&\lesssim \|u\|_{X^{0,b}}\|v\|_{X^{0,b}}\|w\|_{X^{0,a}}.
		\end{align*}
		We obtain \eqref{dualhighhigh} and conclude the proof.
	\end{proof} 

	By slightly modifying the proof of Lemma \ref{bilinearestforkdv}, we have
	\begin{coro}\label{generals}
		Let $s\geq 0$, $1/4< a, b < 1/2$, $a+2b\geq 5/4$, and $0<T<1$. We have
		\begin{equation*}
			\begin{aligned}
				\|(uv)_x\|_{X_T^{s,-a}}&\lesssim \|P_1 u\|_{L_x^2L_T^\infty}\|P_1v\|_{L_x^2L_T^\infty}\\
				&\quad + \|P_1 u\|_{L_x^2L_T^\infty}\|P_{>1}v\|_{X_T^{s,b}}+\|P_{>1}u\|_{X_T^{s,b}}\|P_1 v\|_{L_x^2L_T^\infty}\\
				&\quad + \|P_{>1}u\|_{X_T^{s,b}}\|P_{>1}v\|_{X_T^{0,b}}+ \|P_{>1}u\|_{X_T^{0,b}}\|P_{>1}v\|_{X_T^{s,b}}.
			\end{aligned}
		\end{equation*}
	\end{coro}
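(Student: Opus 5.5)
We follow the proof of Lemma \ref{bilinearestforkdv}, splitting $uv = P_1uP_1v + P_1uP_{>1}v + P_{>1}uP_1v + P_{>1}uP_{>1}v$, and transfer the weight $\langle\xi\rangle^s$ to the factor carrying the highest frequency. Since $\langle\xi\rangle^s \lesssim \langle\xi_1\rangle^s + \langle\xi_2\rangle^s$ for $\xi = \xi_1+\xi_2$ and $s\geq 0$, the $s=0$ dyadic bounds upgrade once we know which input frequency dominates the output.

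\emph{Low$\times$low.} The product $P_1uP_1v$ has Fourier support in $[-2,2]$, so $\langle\xi\rangle^s\lesssim 1$ there. We bound the $X^{s,-a}_T$ norm by $L^2_TL^2_x$ and use Bernstein to drop the derivative. Then H\"older gives $\|P_1uP_1v\|_{L^2_xL^2_T}\leq \|P_1u\|_{L^2_xL^\infty_T}\|P_1v\|_{L^\infty_xL^2_T}$, and Bernstein gives $\|P_1v\|_{L^\infty_xL^2_T}\lesssim \|P_1v\|_{L^2_xL^2_T}\lesssim T^{1/2}\|P_1v\|_{L^2_xL^\infty_T}$. This yields the first term, using $T<1$.

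\emph{Low$\times$high.} By the extension argument (Lemma \ref{lem:equa-norm}) and duality, it suffices to bound $\int (P_1u\,P_{>1}v)_x\, J^s w$ against $\|w\|_{X^{0,a}}$. On the support $|\xi_1|\leq1$, $|\xi_2|>1$ we have $\langle\xi\rangle\sim\langle\xi_2\rangle$. The weight can therefore be moved onto $v$: at the level of the trilinear form, replace $v$ by $J^sP_{>1}v$ and $J^sw$ by $w$, at the cost of the bounded multiplier $\langle\xi\rangle^s\langle\xi_2\rangle^{-s}$. Since this multiplier is not a product, we instead decompose $v$ dyadically in frequency as $P_Nv$ and note that the output is then localized to $|\xi|\sim N$ (or $\lesssim N$), where $\langle\xi\rangle^s\sim N^s$. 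The estimates \eqref{lowhighv}, \eqref{P1P>1_2} and the interpolation are applied blockwise, and the blocks are summed by almost-orthogonality of the output frequency pieces. The high$\times$low term is symmetric.

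\emph{High$\times$high.} In the dyadic decomposition of the proof of Lemma \ref{bilinearestforkdv}, with $N_3$ the output frequency, we always have $N_3\lesssim N_{\max}\sim N_{\mathrm{med}}$. Hence $N_3^s\lesssim N_1^s$ or $N_3^s\lesssim N_2^s$, and the $N^s$ goes onto whichever of $u,v$ has the largest frequency. Each case of the proof (the $L^3$ Strichartz case with $P_1w$, the case $N_{\min}\sim N_{\max}$, the case $N_3=N_{\min}$, and the case $N_1=N_{\min}$ with $L_1$ maximal) then produces $\|P_{>1}u\|_{X^{s,b}}\|P_{>1}v\|_{X^{0,b}}$ or the symmetric term. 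The dyadic sums are unchanged because the weights cancel exactly. In the case $N_2\sim N_3\gg N_1$ the weight lands on $v$, which carries the $N_2$-localization used in the final Cauchy--Schwarz over $N_2\sim N_3$.

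\textbf{Main obstacle.} The delicate point is the low$\times$high piece. The $L^2_xL^\infty_t$ and local smoothing norms are not Fourier-multiplier friendly on the trilinear form, so the weight transfer must be done through the dyadic localization of $v$ and $w$ together with a square-sum in $N$. We expect this to go through because $P_1u$ shifts frequencies by at most $1$, so $P_N$ of the product only interacts with $P_{N/2}v$, $P_Nv$ and $P_{2N}v$.
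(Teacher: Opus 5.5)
Your proposal is correct and follows the paper's route: the paper obtains the corollary only by ``slightly modifying the proof of Lemma \ref{bilinearestforkdv}''. Your write-up supplies the details it leaves implicit. You move $\langle\xi\rangle^s$ onto the input of highest frequency in the high$\times$high dyadic cases. In the low$\times$low term you use Bernstein and $T<1$ to replace $\|P_1v\|_{L^2_TL^\infty_x}$ by $\|P_1v\|_{L^2_xL^\infty_T}$. In the low$\times$high piece, where the $L^2_xL^\infty_t$ norm rules out a Fourier-side weight transfer, you decompose $v$ into $P_Nv$ and square-sum over the output pieces at $|\xi|\sim N$.
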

	For the low frequency, we have the following estimate.
	\begin{lemma}\label{lowfrebi}
		Let $0<T<1$. Then
		\begin{align*}
			&\quad\left\|P_1\int_{0}^{t} U(t-t')(u v)_x~d{t'}\right\|_{L_x^2 L^\infty_T}\\
			&\lesssim T^{1/2}(\|P_1u\|_{L_x^2L_T^\infty}+\|P_{>1}u\|_{X^{0,1/3}_T})(\|P_1v\|_{L_x^2L_T^\infty}+ \|P_{>1}v\|_{X^{0,1/3}_T}).
		\end{align*}
	\end{lemma}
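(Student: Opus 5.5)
Since the Duhamel integral is projected onto low frequencies, the derivative costs nothing and the linear flow is harmless there. The plan is to exploit exactly this: remove $U(t-t')$ by a Taylor expansion, pull the $t'$-integral outside the $L_x^2L_T^\infty$ norm, and then bound the product $(uv)$ at low output frequency using the norms on the right-hand side.

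\emph{Step 1: remove the linear flow.} Write
\[
P_1\int_0^t U(t-t')(uv)_x\,dt' = U(t)\int_0^t P_1U(-t')(uv)_x\,dt'.
\]
The multiplier $e^{it\xi^3}\chi_{[-1,1]}(\xi)$ has Taylor expansion $\sum_k (it)^k\xi^{3k}\chi_{[-1,1]}/k!$. For $|t|\le 1$, each operator $\tilde P_1 \partial_x^{3k}$ (with $\tilde P_1$ a smooth version of $P_1$) is convolution with a Schwartz kernel whose $L^1$ norm grows at most like $C^k$, so the series converges. Convolution with an $L^1$ kernel is bounded on $L_x^2L_T^\infty$ (Minkowski). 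Hence it suffices to bound
\[
\Big\|\sup_{t\le T}\Big|\int_0^t P_1 U(-t')\partial_x(uv)(t')\,dt'\Big|\Big\|_{L^2_x}
\le \Big\|\int_0^T |K*\partial_x(\ldots)|\,dt'\Big\|_{L^2_x},
\]
where we expand $U(-t')$ in the same way and absorb $\partial_x$ into the Schwartz kernel. After this, $\sup_t$ of the integral is controlled by the integral of absolute values. Minkowski then gives a bound by $\int_0^T\|\mathcal K*(uv)(t')\|_{L^2_x}\,dt'$ with $\mathcal K$ Schwartz, frequency-localized, and $\|\mathcal K\|_{L^1\cap L^2}\lesssim1$ uniformly in the expansion index (up to a factor $C^k/k!$). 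Applying Cauchy--Schwarz in $t'$ yields the factor $T^{1/2}$ times $\|\mathcal K*(uv)\|_{L^2_TL^2_x}$.

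\emph{Step 2: the product at low output frequency.} Split $u=P_1u+P_{>1}u$, and similarly for $v$.
\begin{itemize}
\item \textbf{Low$\times$low:} $\|\mathcal K*(P_1uP_1v)\|_{L_T^2L_x^2}\le\|\mathcal K\|_{L^2_x}\|P_1uP_1v\|_{L^2_TL^1_x}$, which is at most $\|P_1u\|_{L_x^2L_T^\infty}\|P_1v\|_{L_x^2L^\infty_T}$ after using $T<1$ and swapping norms, since $L^2_xL^\infty_T\subset L^\infty_TL^2_x$.
\item \textbf{Low$\times$high and high$\times$high:} use Young's inequality in the same way, $\|\mathcal K*(fg)\|_{L^2_x}\lesssim\|fg\|_{L^1_x}$. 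Then Cauchy--Schwarz in $x$ gives $\|P_1u\|_{L^2_xL^\infty_T}\|P_{>1}v\|_{L^2_{T,x}}$ and $\|P_{>1}u\|_{L^4_{T,x}}\|P_{>1}v\|_{L^4_{T,x}}$, the latter in $L^2_T$ after Hölder.
\end{itemize}
Finally, $L^2_{T,x}\lesssim X^{0,0}_T\lesssim X^{0,1/3}_T$, and the Strichartz embedding $X^{0,1/3}\hookrightarrow L^4_{t,x}$ (used in Lemma~\ref{bilinearestforkdv}) transfers to $X^{0,1/3}_T$ via Lemma~\ref{lem:equa-norm} and an extension.

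\textbf{Main obstacle.} The delicate point is Step 1. We must commute $\sup_{t}$ past $U(t)$ on low frequencies uniformly in $t\in[0,T]$ in the $L^2_xL^\infty_T$ norm. The Taylor-expansion argument with smooth cutoffs handles this, but the sharp cutoff $\chi_{[-1,1]}$ requires care. We would replace $P_1$ by $P_1\tilde P_1$ with $\tilde P_1$ smooth and equal to $1$ on $[-1,1]$. The outer $P_1$ is not bounded on $L^2_xL^\infty_T$ in general, so we instead apply it after the expansion: write $P_1U(t)F=\sum_k \frac{t^k}{k!}\,(\tilde P_1\partial_x^{3k})P_1F$, where $F=\int_0^tU(-t')(\ldots)$ is kept unprojected inside. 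The bound then reduces to $\|P_1\partial_x (uv)\|$ estimated in $L^1_TL^2_x$, where $P_1$ acting on $L^2_x$ is harmless.
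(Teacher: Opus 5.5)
\textbf{Overall route.} Your approach is essentially the paper's. You move the supremum in $t$ inside the Duhamel integral, which is the paper's ``maximal function estimate''; your Taylor expansion of $P_1U(t)$ using $\partial_x^{3k}P_1=(\partial_x\tilde P_1)^{3k}P_1$ is a concrete way to do this, as in the proof of Lemma~\ref{stochasticesti}. You then reduce to $T^{1/2}\|P_1(uv)_x\|_{L^2_TL^2_x}$ and treat low frequencies by Bernstein and high frequencies by $X^{0,1/3}\hookrightarrow L^4_{t,x}$.

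Two simplifications and corrections in Step~1:
\begin{itemize}
\item Expanding $U(-t')$ as well is unnecessary. Once the supremum is inside, $\|P_1U(-t')F(t')\|_{L^2_x}=\|P_1F(t')\|_{L^2_x}$ by unitarity.
\item With the sharp cutoff $\chi_{[-1,1]}$, the kernel of $\partial_x^mP_1$ is neither Schwartz nor in $L^1$; it decays like $|x|^{-1}$. Only its $L^2$ norm is bounded. That is all your low$\times$low and low$\times$high steps actually use, so the Schwartz/$L^1$ claim should simply be dropped.
\end{itemize}

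\textbf{The step that fails: high$\times$high.} After $\|\mathcal K*(fg)\|_{L^2_x}\lesssim\|fg\|_{L^1_x}$, Cauchy--Schwarz in $x$ gives $\|f\|_{L^2_x}\|g\|_{L^2_x}$, not $\|f\|_{L^4_x}\|g\|_{L^4_x}$. The bound $\|fg\|_{L^1_x}\lesssim\|f\|_{L^4_x}\|g\|_{L^4_x}$ is false on $\mathbb{R}$: take $f=g=\chi_{[0,L]}$ with $L\to\infty$. So the chain you wrote cannot end with the $L^4_{t,x}$ Strichartz bound. There are two ways to repair it.
\begin{itemize}
\item Bound the low-frequency multiplier $L^2_x\to L^2_x$ instead. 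By Plancherel, $\|P_1\partial_x(fg)\|_{L^2_x}\le\|fg\|_{L^2_x}\le\|f\|_{L^4_x}\|g\|_{L^4_x}$, followed by H\"older in $t$. This is exactly the paper's step.
\item Keep your $L^1_x$ bound and use $\|P_{>1}u\|_{L^4_TL^2_x}\lesssim\|P_{>1}u\|_{X^{0,1/4}_T}\le\|P_{>1}u\|_{X^{0,1/3}_T}$ for $T<1$. This follows from $H^{1/4}_t\hookrightarrow L^4_t$ at each fixed $\xi$ together with Minkowski's inequality. This variant even avoids the refined Strichartz estimate.
\end{itemize}
With either repair your argument is complete.
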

	
	\begin{proof}[\textbf{Proof}]
		By the maximal function estimate and the H\"{o}lder inequality, one has
		\begin{align*}
			\left\|P_1\int_{0}^{t} U(t-t')(u v)_x~d{t'}\right\|_{L_x^2 L_T^\infty}			\lesssim \|P_1(uv)_x\|_{L_T^1L_x^2}\lesssim T^{1/2}\|u\|_{L_{T}^4L_x^4}\|v\|_{L_T^4L_x^4}.
		\end{align*}
		By Bernstein and Minkowski inequalities, we have  
		$$\|P_1u\|_{L_T^4L_x^4}\lesssim \|P_1u\|_{L_T^4L_x^2}\lesssim T^{1/4}\|P_1u\|_{L_T^\infty L_x^2}\lesssim \|P_1u\|_{L_x^2L_T^\infty}.$$
		Combining with $X^{0,1/3}\hookrightarrow L^4_{t,x}$, we conclude the proof.
	\end{proof}
	
	Combining Lemmas \ref{inhomoesti}, \ref{bilinearestforkdv}, \ref{lowfrebi}, and $X^{0,c}\hookrightarrow C_tL_x^2$ for $c>1/2$, by choosing $a = 1/2-\varepsilon$ for $0<\varepsilon<1/4$, $b<1/2$, and $a+2b>5/4$ we have
	\begin{prop}\label{biliearesti}
		Let $3/8<b<1/2$, $0<T<1$. Then for $\varepsilon<2b-3/4$, we have 
		\begin{equation*}
			\left\|\int_0^tU(t-t')(uv)_x~dt'\right\|_{Z_T^{0,b}}\lesssim T^{\varepsilon}\Big(\|u\|_{Z^{0,b}_T}\|v\|_{X^{0,b}_T}+\|u\|_{X^{0,b}_T}\|v\|_{Z^{0,b}_T}\Big).
		\end{equation*}
	\end{prop}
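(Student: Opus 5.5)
Write $\mathcal{I}(f)(t)=\int_0^tU(t-t')f(t')\,dt'$ and $f=(uv)_x$. The $Z_T^{0,b}$ norm in \eqref{defixsbt} has three pieces, and I will bound each one separately. Throughout I take $a=1/2-\varepsilon$, so that Lemma \ref{inhomoesti} can be applied with $\epsilon=2\varepsilon$ (its $\epsilon$). The constraint $a+2b\ge 5/4$ of Lemma \ref{bilinearestforkdv} becomes $1/2-\varepsilon+2b\ge 5/4$, i.e.\ $\varepsilon\le 2b-3/4$. This is exactly the stated restriction, and it is nonvacuous precisely because $b>3/8$. We also need $a>1/4$, which holds as soon as $\varepsilon<1/4$. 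Since $b<1/2$, this is automatic from $\varepsilon<2b-3/4<1/4$.

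\textbf{Bourgain and energy parts.} Lemma \ref{inhomoesti} with $\epsilon=2\varepsilon$ gives
\[
\|\mathcal{I}(f)\|_{X_T^{0,1/2+\varepsilon}}\lesssim T^{\varepsilon}\|f\|_{X_T^{0,-1/2+2\varepsilon}} \le T^{\varepsilon}\|f\|_{X_T^{0,-a}}.
\]
The last inequality holds because $-1/2+2\varepsilon\le -a=-1/2+\varepsilon$ (monotonicity in $b$, transferred to restricted norms via Lemma \ref{lem:equa-norm}). Since $1/2+\varepsilon>b$, the left side controls $\|\mathcal{I}(f)\|_{X_T^{0,b}}$. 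Moreover $X^{0,1/2+\varepsilon}\hookrightarrow C_tL_x^2$, again passing through extensions, so it also controls $\|\mathcal{I}(f)\|_{C_TL_x^2}$. Lemma \ref{bilinearestforkdv} then bounds $\|f\|_{X_T^{0,-a}}$ by $\|u\|_{Z_T^{0,b}}\|v\|_{X_T^{0,b}}+\|u\|_{X_T^{0,b}}\|v\|_{Z_T^{0,b}}$. One point needs care here. Lemma \ref{inhomoesti} is stated for $X_T^{s,(1+\epsilon)/2}$ with exponent above $1/2$, while Lemma \ref{lem:equa-norm} only covers $b<1/2$. I would therefore read the inequality at the level of extensions: take near-optimal extensions of $f$, apply the standard estimate for $\theta(t)\mathcal{I}$, and restrict back to $[0,T]$.

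\textbf{Low-frequency part.} For $\|P_1\mathcal{I}(f)\|_{L_x^2C_T}$ I apply Lemma \ref{lowfrebi}, which gives the factor $T^{1/2}\le T^{\varepsilon}$ times
\[
\big(\|P_1u\|_{L_x^2L_T^\infty}+\|P_{>1}u\|_{X_T^{0,1/3}}\big)\big(\|P_1v\|_{L_x^2L_T^\infty}+\|P_{>1}v\|_{X_T^{0,1/3}}\big).
\]
Because $b>3/8>1/3$, we have $\|P_{>1}u\|_{X_T^{0,1/3}}\lesssim\|u\|_{X_T^{0,b}}$, so the first factor is $\lesssim\|u\|_{Z_T^{0,b}}$.

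The main obstacle is the second factor. It contains $\|P_1v\|_{L_x^2L_T^\infty}$, which is part of the $Z$ norm of $v$, not of $\|v\|_{X_T^{0,b}}$. Expanding the product, the term $\|P_1u\|_{L^2_xL^\infty_T}\|P_1v\|_{L^2_xL^\infty_T}$ is the only one not directly of the claimed form. I would handle it by returning to the proof of Lemma \ref{lowfrebi} and using the weaker norm for the $v$ factor there: by Bernstein,
\[
\|P_1v\|_{L_T^4L_x^4}\lesssim\|P_1v\|_{L_T^4L_x^2}.
\]
Then $L_T^4L_x^2$ is controlled by $\|P_1v\|_{X_T^{0,b}}$ through the embedding $X^{0,b}\hookrightarrow L_t^4L_x^2$, valid for $b\ge 1/4$ by Sobolev in $t$ after conjugating by $U(-t)$. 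The resulting bound is $T^{1/2}\|u\|_{Z_T^{0,b}}\|v\|_{X_T^{0,b}}$. The alternative would be to estimate only symmetric combinations. Either way, the two cross terms are already of the stated form.

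Summing the three pieces gives the proposition, with the power $T^{\min(\varepsilon,1/2)}=T^\varepsilon$.
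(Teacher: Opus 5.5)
Your overall route matches the paper's. You use Lemma \ref{inhomoesti} together with Lemma \ref{bilinearestforkdv} (at $a=1/2-\varepsilon$) for the $X^{0,b}_T$ and $C_TL^2_x$ pieces, and Lemma \ref{lowfrebi} for the $L^2_xC_T$ piece. Your treatment of the low-frequency piece is correct, and it fills a step the paper's one-line "combining" argument skips. Lemma \ref{lowfrebi} as stated produces the product $\|P_1u\|_{L^2_xL^\infty_T}\|P_1v\|_{L^2_xL^\infty_T}$, which the right-hand side does not control. Your replacement $\|P_1v\|_{L^4_{T,x}}\lesssim\|P_1v\|_{L^4_TL^2_x}\lesssim\|v\|_{X^{0,b}_T}$ (Minkowski plus $H^{1/4}_t\hookrightarrow L^4_t$ after conjugating by $U(-t)$) repairs this and yields $T^{1/2}\|u\|_{Z^{0,b}_T}\|v\|_{X^{0,b}_T}$.

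The gap is in the exponent bookkeeping for Lemma \ref{inhomoesti}. With its parameter set to $2\varepsilon$, the right-hand side is $\|f\|_{X_T^{0,-1/2+2\varepsilon}}$. But $-1/2+2\varepsilon>-1/2+\varepsilon=-a$, not $\le$. Since $X^{0,\beta}$ norms increase in $\beta$, you actually have $\|f\|_{X_T^{0,-a}}\le\|f\|_{X_T^{0,-1/2+2\varepsilon}}$, the reverse of what your chain needs. So Lemma \ref{bilinearestforkdv} at $a=1/2-\varepsilon$ does not close the argument.

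You have three ways to repair it:
\begin{itemize}
\item To keep parameter $2\varepsilon$, apply Lemma \ref{bilinearestforkdv} at $a=1/2-2\varepsilon$ instead. That forces $\varepsilon\le b-3/8$, which is exactly the choice $\varepsilon=b-3/8$ made in the proof of Proposition \ref{localwellres}.
\item Match exponents honestly, with parameter $\varepsilon$. This gives only $T^{\varepsilon/2}$. Taken literally, this is what the paper's own recipe ($a=1/2-\varepsilon$ plus Lemma \ref{inhomoesti}) delivers, and it suffices for every later use.
\item To get $T^{\varepsilon}$ on the full range $\varepsilon<2b-3/4$, replace Lemma \ref{inhomoesti} by the standard sharper estimate $\|\int_0^tU(t-t')f\,dt'\|_{X_T^{0,c}}\lesssim T^{1-a-c}\|f\|_{X_T^{0,-a}}$ for $1/2<c\le 1-a$. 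Take $a=1/2-\varepsilon'$ with $\varepsilon<\varepsilon'\le 2b-3/4$, and $c=1/2+\varepsilon'-\varepsilon$. This gives exactly $T^{\varepsilon}$, and since $c>1/2$ and $c>b$ it still controls both $C_TL^2_x$ and $X^{0,b}_T$.
\end{itemize}
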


	Combining Lemma \ref{truncated_equation}, Corollary \ref{generals} with Lemma \ref{lowfrebi}, we also have the following proposition.
	\begin{prop}\label{trunbiliearestis}
		Let $s\geq 0$, $3/8<b<1/2$, $\varepsilon<2b-3/4$, $R>0$, $0<T<1$. Then, we have
		\begin{equation*}
			\left\|\int_0^tU(t-t')(B_R(u))_x~dt'\right\|_{Z_T^{s,b}}\lesssim RT^\varepsilon\|u\|_{Z_T^{s,b}}
		\end{equation*}
		and
		\begin{equation*}
		\left\|\int_0^tU(t-t')(B_R(u)-B_R(v))_x~dt'\right\|_{Z_T^{0,b}}\lesssim RT^{\varepsilon}\|u-v\|_{Z_T^{0,b}}.
		\end{equation*}
	\end{prop}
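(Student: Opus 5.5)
The plan is to write $B_R(u)=\tilde u^2$ with
$$\tilde u:=\theta(\|P_1u\|_{L_x^2C_t}/R)P_1u+\theta(\|u\|^2_{X^{0,b}_t}/R^2)P_{>1}u=:u_1+u_2,$$
and to estimate the three pieces of the $Z^{s,b}_T$ norm in \eqref{defixsbt} separately.

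\textbf{Low-frequency norm.} For $\|P_1\int_0^tU(t-t')(\tilde u^2)_x\,dt'\|_{L_x^2C_T}$ we apply Lemma \ref{lowfrebi} with $u=v=\tilde u$. This gives a bound by $T^{1/2}(\|u_1\|_{L_x^2L_T^\infty}+\|P_{>1}u_2\|_{X_T^{0,1/3}})^2$. Since $b>3/8>1/3$, the $X_T^{0,1/3}$ norm is dominated by $X_T^{0,b}$.

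\textbf{The $X^{s,b}_T$ and $C_TH^s_x$ norms.} Choose $a=1/2-\varepsilon$. Then Lemma \ref{inhomoesti} with exponent $(1+2\varepsilon')/2$ close to $1$ on the left controls $X_T^{s,b}$, and, via $X^{s,c}\hookrightarrow C_tH^s$ for $c>1/2$, also $C_TH^s_x$. The bound is $T^{\varepsilon}\|(\tilde u^2)_x\|_{X_T^{s,-a}}$. Corollary \ref{generals} then controls $\|(\tilde u^2)_x\|_{X_T^{s,-a}}$ by products in which one factor is measured at level $0$: either $\|u_1\|_{L_x^2L_T^\infty}$ or $\|P_{>1}u_2\|_{X_T^{0,b}}$. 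The other factor is measured at level $s$, or also at level $0$ for low$\times$low. The level-$s$ factors are bounded by $\|u\|_{Z_T^{s,b}}$, using the first estimate of Lemma \ref{truncated_equation} for $u_2$. For $u_1$ the $L^2_xL^\infty_T$ norm is trivially $\le\|P_1u\|_{L^2_xC_T}$, and low frequencies make the $s$-weight harmless.

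\textbf{Cutoff bounds and the first estimate.} The level-$0$ factors need to be $\lesssim R$. For $u_2$ this is the second estimate of Lemma \ref{truncated_equation}. For $u_1$ I would argue via the time-dependent cutoff. Let $t^*=\sup\{t\le T:\|P_1u\|_{L_x^2C_t}\le 2R\}$. Since $\theta$ vanishes beyond $2$, $u_1(t)=0$ for $t>t^*$, and hence $\|u_1\|_{L_x^2L_T^\infty}\le\|P_1u\|_{L_x^2C_{t^*}}\le 2R$. Combining the pieces gives the first inequality $\lesssim RT^\varepsilon\|u\|_{Z_T^{s,b}}$, once we note that the terms lose only the $T^{1/2}\ge T^\varepsilon$ factor.

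\textbf{Difference estimate.} Write $B_R(u)-B_R(v)=(\tilde u-\tilde v)(\tilde u+\tilde v)$ and apply Lemma \ref{bilinearestforkdv}, Lemma \ref{lowfrebi} and Lemma \ref{inhomoesti} at $s=0$. The sum factor is bounded by $R$ as above. The difference factor needs
$$\|\tilde u-\tilde v\|_{\text{low}}\lesssim\|u-v\|_{Z_T^{0,b}}.$$
For the high part this is the third estimate of Lemma \ref{truncated_equation}. For the low part write
$$u_1-v_1=\theta(\alpha/R)(P_1u-P_1v)+\bigl(\theta(\alpha/R)-\theta(\beta/R)\bigr)P_1v,$$
with $\alpha=\|P_1u\|_{L_x^2C_t}$ and $\beta=\|P_1v\|_{L_x^2C_t}$. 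The second term uses the Lipschitz bound $|\theta(\alpha/R)-\theta(\beta/R)|\le R^{-1}\|\theta'\|_\infty\|P_1(u-v)\|_{L_x^2C_t}$. It also uses that $P_1v$ is only "active" up to the time where $\beta\le 2R$ (arguing symmetrically by which of $u,v$ exits first), giving a bound $\lesssim\|P_1(u-v)\|_{L_x^2C_T}$.

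\textbf{Main obstacle.} I expect the difficulty to be this low-frequency cutoff Lipschitz step. The cutoff depends on $t$, and the $L_x^2L_T^\infty$ norm takes the supremum inside the $x$-integral, so pointwise-in-$t$ splitting must be handled carefully. I would first treat the case where both cutoffs vanish after $\min(t_u^*,t_v^*)$, and on $[0,\min]$ use monotonicity in $t$ of $\alpha$ and $\beta$ to bound $\sup_t|\theta(\alpha(t)/R)-\theta(\beta(t)/R)|\,|P_1v(t,x)|$ pointwise in $x$.
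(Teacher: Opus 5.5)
Your proposal is correct and follows the paper's route, which simply combines Lemma~\ref{truncated_equation}, Corollary~\ref{generals} and Lemma~\ref{lowfrebi} (with Lemma~\ref{inhomoesti} for the $X^{s,b}_T$ and $C_TH^s_x$ parts). Your explicit handling of the $L_x^2C_t$ cutoff fills in a step the paper leaves implicit: the bound $\|\theta(\|P_1u\|_{L_x^2C_t}/R)P_1u\|_{L_x^2L_T^\infty}\le 2R$, and putting the $\theta$-difference on whichever of $u,v$ leaves $\{\|P_1\cdot\|_{L_x^2C_t}\le 2R\}$ last. Note that both cutoffs vanish only after the \emph{later} exit time, not after the earlier one as your last paragraph says. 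The only blemish is inherited from the paper's Proposition~\ref{biliearesti}: choosing $a=1/2-\varepsilon$ in Lemma~\ref{inhomoesti} yields $T^{\varepsilon/2}$ rather than $T^{\varepsilon}$, which is harmless because any positive power of $T$ suffices for the contraction.
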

	
	\begin{rem}\label{rem:notprecise}
		Although Proposition \ref{biliearesti} has a concise expression, we still need to use the expressions in Lemmas \ref{bilinearestforkdv}, \ref{lowfrebi} in the proof of Proposition \ref{trunbiliearestis} and the local well-posedness.
	\end{rem}

	\subsection{Estimates of the stochastic convolution}
	In order to obtain control of the stochastic term by using the property of $k$, we need the following two lemmas. The first one is classical, and the second one is showed in Appendix \ref{proofoffrac}. Recall that $\Phi$ is the operator defined by \eqref{kernel} and $\{e_j\}$ is an orthonormal basis of $L^2_x$.
	\begin{lemma}\label{hilbertschmidt}
		For any $s\in \mathbb{R}$, we have $\|\Phi e_j\|_{l^2_j H^s_x} = \|k\|_{L_y^2H^s_x}$.
	\end{lemma}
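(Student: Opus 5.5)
The plan is to reduce the identity to Parseval's identity in $L^2_y$ applied pointwise in $x$ (after applying $J^s_x$), followed by an integration in $x$ and Fubini/Tonelli to exchange the sum over $j$ with the $x$-integral. First assume $k$ is nice, say $k\in\mathcal{S}(\mathbb{R}^2)$, and pass to general $k\in L^2_yH^s_x$ by density at the end.

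\textbf{Step 1: move $J^s_x$ inside the integral.} For $k\in\mathcal S$ we have
\begin{equation*}
J^s_x(\Phi e_j)(x)=\int_{\mathbb{R}} (J^s_xk)(x,y)\,e_j(y)\,dy ,
\end{equation*}
since $J^s_x$ is a Fourier multiplier in $x$ alone and commutes with integration in $y$. This is justified by taking the Fourier transform in $x$: $\mathscr F_x(\Phi e_j)(\xi)=\int \mathscr F_xk(\xi,y)e_j(y)\,dy$, then multiplying by $\langle\xi\rangle^s$.

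\textbf{Step 2: Parseval in $y$.} Fix $x$. Since $\{e_j\}$ is an orthonormal basis of $L^2_y$ and $k$ is real-valued (or, in general, with $\overline{e_j}$, which is again an orthonormal basis),
\begin{equation*}
\sum_j |J^s_x(\Phi e_j)(x)|^2=\sum_j\big|\langle (J^s_xk)(x,\cdot),\overline{e_j}\rangle_{L^2_y}\big|^2=\|(J^s_xk)(x,\cdot)\|_{L^2_y}^2 .
\end{equation*}

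\textbf{Step 3: integrate in $x$.} Integrating over $x$ and using Tonelli to swap $\sum_j$ and $\int dx$ gives
\begin{equation*}
\|\Phi e_j\|_{l^2_jH^s_x}^2=\int\!\!\int |J^s_xk(x,y)|^2\,dy\,dx=\|k\|_{L^2_yH^s_x}^2 .
\end{equation*}

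\textbf{Step 4: density.} For general $k$, the map $k\mapsto(\Phi_k e_j)_j$ is linear, and Step 3 shows it is an isometry on a dense subspace of $L^2_yH^s_x$. It therefore extends uniquely to an isometry. For each fixed $j$, this extension agrees with the integral-operator definition, because $\Phi_{k_n}e_j\to\Phi_k e_j$ in $H^s_x$ whenever $k_n\to k$; this follows from Cauchy--Schwarz in $y$ applied on the Fourier side.

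The argument can be viewed as identifying the Hilbert--Schmidt norm of $J^s\Phi$ with the $L^2$ norm of its kernel. The only delicate point is Step 4, where one must check that the formula for $\Phi e_j$ makes sense when $s<0$ or when $k$ is merely in $L^2_yH^s_x$. Working on the Fourier side in $x$ makes this routine.
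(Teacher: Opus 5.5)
Your proof is correct and is essentially the paper's argument. The paper simply cites the fact that $\|\Phi e_j\|_{l^2_jH^s_x}$ is the Hilbert--Schmidt norm of the integral operator with kernel $J^s_xk$, which equals $\|J^s_xk\|_{L^2_{x,y}}$; your Steps 1--3 (Parseval in $y$ pointwise in $x$, then Tonelli) are exactly the standard proof of that fact, and your density step is a valid extra justification that the paper leaves implicit (it could also be avoided by running Parseval in $y$ for each fixed $\xi$ on the $x$-Fourier side from the start).
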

	\begin{proof}[\textbf{Proof}]
		$\|\Phi e_j\|_{l^2_jH^s_x}$ is equal to the Hilbert--Schmidt norm of the operator with kernel $J^s_xk(x,y)$. Thus $\|\Phi e_j\|_{l^2_j H^s_x} = \|J^s_xk(x,y)\|_{L^2_{x,y}} =  \|k\|_{L_y^2H^s_x}$.
	\end{proof}	
	
	\begin{lemma}\label{fractionleib}
		Let $s\geq 0$, $u\in H^s_x$. We have 
		\begin{equation*}
			\| u(x) \Phi e_j\|_{l^2_j H^s_x} =\|u(x)k(x,y)\|_{L^2_y H^s_x}\lesssim \|u\|_{H^s_x}\|J^s_xk\|_{L_x^\infty L_y^2}.
		\end{equation*}
	\end{lemma}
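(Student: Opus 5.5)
The equality is the same Hilbert--Schmidt identity as in Lemma \ref{hilbertschmidt}. For fixed $u$, the operator $f\mapsto u\,\Phi f$ has kernel $u(x)k(x,y)$. Hence
\[
\| u\,\Phi e_j\|_{l^2_jH^s_x}^2=\sum_j\|J^s_x(u\,\Phi e_j)\|_{L^2_x}^2
\]
is the squared Hilbert--Schmidt norm of the operator with kernel $J^s_x\big(u(x)k(x,y)\big)$. That norm equals $\|J^s_x(uk)\|_{L^2_{x,y}}^2=\|u k\|_{L^2_yH^s_x}^2$. So the real content is the inequality, which is a vector-valued fractional Leibniz (Kato--Ponce) estimate. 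Fix $y$ and view $k(\cdot,y)$ as a function of $x$, or better, view $k(x,\cdot)$ as an $L^2_y$-valued function of $x$. We then need
\[
\|J^s(u\,k)\|_{L^2_xL^2_y}\lesssim \|u\|_{H^s}\,\|J^sk\|_{L^\infty_xL^2_y}.
\]

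For integer $s$ I would simply expand by the Leibniz rule. The general case $s\geq0$ I would handle by a paraproduct decomposition in $x$. Write $uk=\sum_N P_{\le N/8}u\,P_Nk+\sum_N P_Nu\,P_{\le N/8}k+\sum_{N\sim M}P_Nu\,P_Mk$, using Littlewood--Paley pieces, with $k$ localized in $x$-frequency only and $y$ a passive parameter.

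\emph{Case $u$ at high frequency} (second and third sums). Here $J^s$ falls on $u$. By Littlewood--Paley square functions, which are valid for $L^2_y$-valued functions since $L^2_y$ is Hilbert, the contribution is bounded by
\[
\Big\|\Big(\sum_N N^{2s}|P_Nu|^2\Big)^{1/2}\sup_M\|P_{\le M}k\|_{L^2_y}\Big\|_{L^2_x}\lesssim\|u\|_{H^s}\,\|k\|_{L^\infty_xL^2_y}.
\]
This uses that $\|P_{\le M}k(x,\cdot)\|_{L^2_y}$ is controlled by the vector-valued maximal function of $k$. Since $\|k\|_{L^\infty_xL^2_y}\le\|J^sk\|_{L^\infty_xL^2_y}$ is not literally automatic, I would instead write $k=J^{-s}(J^sk)$. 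Because $J^{-s}$ has an integrable kernel for $s>0$ (and is the identity for $s=0$), we get $\|k\|_{L^\infty_xL^2_y}\lesssim\|J^sk\|_{L^\infty_xL^2_y}$ by Minkowski.

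\emph{Case $k$ at high frequency} (first sum). Here $J^s$ falls on $k$. We need
\[
\Big\|\sum_N P_{\le N/8}u\,N^s P_Nk\Big\|_{L^2_xL^2_y}.
\]
This is the main obstacle: we must put $u$ in $L^2_x$ and the high-frequency $k$ in $L^\infty_x$, so we cannot simply sum dyadic pieces in $L^\infty$. I would first try the bound by $\|\sup_N|P_{\le N/8}u|\,(\sum_N N^{2s}|P_Nk|^2)^{1/2}\|_{L^2_xL^2_y}$. The square function in $L^\infty$ fails, however. So instead I would use the $L^2_x$ orthogonality of the outputs (each term is frequency-localized near $N$) to get
\[
\sum_N\|P_{\le N/8}u\|_{L^2_x}^2\,\|N^sP_Nk\|_{L^\infty_xL^2_y}^2.
\]
Here $\|N^sP_Nk\|_{L^\infty_xL^2_y}\lesssim\|J^sk\|_{L^\infty_xL^2_y}$ uniformly in $N$. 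If this sum does not converge, trade a small power: $\|N^sP_Nk\|\lesssim N^{-\delta}\|J^{s+\delta}k\|$ is not allowed. The alternative is to use $N^{2s}\|P_{\le N/8}u\|_{L^2}^2$ with the low-frequency part of $u$ at $s=0$ and exploit $\sum_N\|P_Nk\|\ldots$. If the dyadic summation genuinely fails at this endpoint, I would fall back on the physical-space formula for $J^s$ with $0<s<1$ (the difference-quotient characterization $\int\!\!\int|f(x+h)-f(x)|^2|h|^{-1-2s}$), in which $u(x+h)k(x+h)-u(x)k(x)$ splits directly into the two terms, and then reduce general $s$ to integer part plus $s\in(0,1)$.
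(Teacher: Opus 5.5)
Your Hilbert--Schmidt identity is correct. So is your treatment of the terms where $u$ carries the high frequency, including $\|k\|_{L^\infty_xL^2_y}\lesssim\|J^s_xk\|_{L^\infty_xL^2_y}$ via the integrable Bessel kernel. The gap is the term $\sum_N P_{\le N/8}u\,P_Nk$, which is the whole content of the lemma, and none of your routes closes it.

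\textbf{Orthogonality in the frequency of $k$.} Grouping by $N$ and using orthogonality leaves $\sum_N\|P_{\le N/8}u\|_{L^2}^2\|N^sP_Nk\|_{L^\infty_xL^2_y}^2$. The only available bound on the second factor is the uniform one, and it is sharp: if $J^s_xk=\chi(x)\mathrm{sgn}(x)\phi(y)$, then $\|N^sP_Nk\|_{L^\infty_xL^2_y}\sim1$ for all $N$. So the sum is infinite for every $u\neq0$. This is not an endpoint accident: orthogonality in the output frequency throws away the only source of decay available.

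\textbf{Difference-quotient fallback.} This does not split ``directly''. The term $u(x)\bigl(k(x+h)-k(x)\bigr)$ requires
$\sup_x\int\|k(x+h,\cdot)-k(x,\cdot)\|_{L^2_y}^2|h|^{-1-2s}\,dh\lesssim\|J^s_xk\|^2_{L^\infty_xL^2_y}$.
But $J^s_xk\in L^\infty_xL^2_y$ only gives $\|k(x+h)-k(x)\|_{L^2_y}\lesssim|h|^s$. In the example above, $k(x,\cdot)-k(0,\cdot)\approx c|x|^s\mathrm{sgn}(x)\phi$, so the $h$-integral diverges logarithmically at $x=0$.

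\textbf{Integer case.} The same obstruction is hidden here. The top term $u\,\partial_x^sk$ needs $\|\partial_x^sk\|_{L^\infty_xL^2_y}$. For odd $s$ this is not controlled by $\|J^s_xk\|_{L^\infty_xL^2_y}$, because $(i\xi)^s\langle\xi\rangle^{-s}$ has different limits at $\pm\infty$; it is a Hilbert-transform-type multiplier, unbounded on $L^\infty$.

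\textbf{The missing idea (the paper's route).} Organize this term by the frequency of $u$ instead: write it as $\sum_N P_Nu\,P_{\gg N}k$ and use the plain triangle inequality in $N$.
\begin{itemize}
\item Fix $N$, rescale to $N=1$, and replace $J^s$ by $D^s$ at high frequency. On the support one has $|\xi-\eta|\lesssim1\ll|\eta|$, so for $\xi>0$ one expands $\xi^s=\sum_n a_{n,s}\eta^{s-n}(\xi-\eta)^n$ with $|a_{n,s}|\le C_s^n$.
\item Restrict the output to $\xi>0$. This is harmless because the output is measured in $L^2$, and on the support it forces $\eta>0$.
\item Then $D^s_x(P_1u\,D^{-s}_xP_{\gg1}k)$ becomes a series of products: $n$ derivatives of $P_1u$ times a smooth order $-n$ multiplier applied to $P_{\gg1}k$. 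Each term is bounded by H\"older in $L^2_x\times L^\infty_xL^2_y$ with constant $\lesssim(2C_s/C)^n$, which is summable.
\end{itemize}
This handles the entire tail $P_{\gg N}k$ at once, with no square function of $k$ in $L^\infty$. It gives $\|J^s_x(P_Nu\,P_{\gg N}k)\|_{L^2_{x,y}}\lesssim\|P_Nu\|_{L^2_x}\|J^s_xk\|_{L^\infty_xL^2_y}$.

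The $\ell^1$ sum over $N$ is then paid for by the regularity of $u$ on the right-hand side: $\sum_N\|P_Nu\|_{L^2}\lesssim_s\|u\|_{H^s}$ for $s>0$. If you wanted only $\|u\|_{L^2}$ in this term, you would need the Carleson-measure/BMO paraproduct bound rather than orthogonality.
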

\begin{proof}[\textbf{Proof}]
	We put the proof of this lemma in Appendix \ref{proofoffrac}.
\end{proof}
	
	Estimates of the stochastic convolution for the additive case and the multiplicative case are showed in the following lemma at the same time. In fact, $f_j$ would be $u\Phi e_j$ or $\Phi e_j$ when we use this lemma. For pioneering works, see for example Proposition 2.1 in \cite{de1999white} and Propositions 2.5, 2.7 in \cite{kdvmuti}.
	\begin{lemma}\label{stochasticesti}
		Let $s \in \mathbb{R}$, $0\leq 
		b<1/2$, $0<T<1$. For any $ j\in \mathbb{N}$, $f_j(t',x,\omega)$ is $\mathscr{B}\left([0,t]\times \mathbb{R}\right)\times \mathcal{F}_t$ measurable on $[0,t]\times \mathbb{R}\times\Omega  $. Then for any stopping time $\sigma$, we have 
		\begin{equation*}
			\left\|\sum_{j\in \mathbb{N}}\int_{0}^{t} U(t-t')f_j(t',x,\omega) ~d\beta_j(t')\right\|_{L_\omega^{2}Z^{s,b}_{T\wedge \sigma}} \lesssim \| f_j\|_{L_\omega^{2}L^2_{T\wedge \sigma}l^2_j H^s_x}.
		\end{equation*}
	\end{lemma}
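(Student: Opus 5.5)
The plan is to handle the three pieces of the $Z^{s,b}_{T\wedge\sigma}$ norm separately. Write $F(t)=\sum_j\int_0^t U(t-t')f_j\,d\beta_j(t')$. Replacing $f_j$ by $\chi_{[0,\sigma]}(t')f_j$, which is still adapted, we may assume $\sigma\ge T$ at the cost of controlling $F$ on $[0,T]$. This works because each piece of the norm on $[0,T\wedge\sigma]$ only sees the values there, and on that interval the truncated convolution coincides with the original. So fix $\sigma=T$.

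\textbf{The $C_TH^s_x$ part.} Write $F(t)=U(t)M(t)$ with the $H^s$-valued martingale $M(t)=\sum_j\int_0^tU(-t')f_j\,d\beta_j$. Since $U$ is unitary on $H^s$, we have $\|F\|_{C_TH^s}=\sup_t\|M(t)\|_{H^s}$. Doob's (or Burkholder's) inequality in the Hilbert space $H^s$ together with the It\^o isometry then gives the bound $\|f_j\|_{L^2_\omega L^2_Tl^2_jH^s}$.

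\textbf{The $X^{s,b}_T$ part.} Here $w(t,\xi)=\langle\xi\rangle^s\mathscr F_xM(t)(\xi)$. For the term $\int_0^T(1+t^{-2b})|w|^2$, take expectations and apply It\^o's isometry:
\[
\mathbb E\|w(t)\|^2_{L^2_\xi}=\mathbb E\int_0^t\|f_j\|^2_{l^2H^s}dt'.
\]
Fubini then gives
\[
\int_{t'}^T(1+t^{-2b})\,dt\lesssim 1,
\]
since $b<1/2$ and $T<1$. For the Gagliardo term, $w(t)-w(t'')$ for $t''<t$ is the stochastic integral over $[t'',t]$, so its expectation squared equals $\mathbb E\int_{t''}^t\|f_j(r)\|^2dr$. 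Integrating this against $|t-t''|^{-1-2b}$ over $0<t''<r<t<T$ gives
\[
\int_r^T\!\!\int_0^r|t-t''|^{-1-2b}\,dt''\,dt\lesssim\int_r^T (t-r)^{-2b}\,dt\lesssim 1,
\]
which is finite because $2b<1$. This is exactly where $b<1/2$ is used.

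\textbf{The $L^2_\omega L^2_xL^\infty_T$ low-frequency part: the main obstacle.} Here the supremum in $t$ sits inside the $x$-integral, so Doob cannot be applied directly in $L^2_x$. Write
\[
P_1F(t,x)=\int_{|\xi|\le1}e^{ix\xi+it\xi^3}\hat M(t,\xi)\,d\xi .
\]
I would expand $e^{it\xi^3}=\sum_n (it\xi^3)^n/n!$. This is legitimate because $|t\xi^3|\le1$, so the series converges absolutely. It gives
\[
P_1F(t,x)=\sum_n\frac{i^nt^n}{n!}\,\partial_x^{3n}\text{-type operator }\bigl(P_1M(t)\bigr)(x),
\]
namely $(-\partial_x^3)^n$ applied to $P_1M(t)$ up to sign conventions, which can be written as $\sum_n \frac{(it)^n}{n!}\,M_n(t,x)$ with
\[
M_n(t,x)=\sum_j\int_0^t \bigl[P_1 D^{3n}_{\pm}U(-t')f_j\bigr](x)\,d\beta_j(t').
\]
For each fixed $x$, $M_n(\cdot,x)$ is a scalar martingale. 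Then
\[
\sup_{t\le T}|P_1F(t,x)|\le\sum_n\frac1{n!}\sup_t|M_n(t,x)|,
\]
and Doob's inequality applied pointwise in $x$ gives
\[
\mathbb E\sup_t|M_n(t,x)|^2\lesssim\mathbb E\sum_j\int_0^T|P_1\partial_x^{3n}U(-t')f_j(x)|^2\,dt'.
\]
Integrating in $x$ and using Plancherel with $|\xi|^{3n}\le1$ on the support yields
\[
\|M_n\|_{L^2_\omega L^2_xL^\infty_T}\lesssim\|f_j\|_{L^2_\omega L^2_Tl^2_jL^2_x}.
\]
Summing over $n$ with the $1/n!$ weights then gives the bound by $\|f_j\|_{L^2_\omega L^2_Tl^2_jH^s_x}$ up to a factor $\langle 1\rangle^{|s|}$. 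The delicate points are measurability, handled by taking a continuous version of each $M_n$, and interchanging the sum with the supremum and the expectation, handled by Minkowski's inequality over $n$.
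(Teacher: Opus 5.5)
Your proposal is correct and follows the paper's proof: unitarity of $U(t)$ plus BDG/Doob for the $C_TH^s_x$ part, the It\^o isometry with Fubini for the $X^{s,b}_T$ part, and the Taylor expansion of $P_1U(t)$ followed by a martingale maximal inequality applied pointwise in $x$ for the $L^2_xL^\infty_T$ part. The differences are cosmetic: you remove the stopping time by replacing $f_j$ with $\chi_{[0,\sigma]}f_j$, and you put $\partial_x^{3n}$ inside the martingale and use $|\xi|^{3n}\le 1$, whereas the paper first strips the derivatives with the Bernstein bound $\|\partial_xP_1f\|_{L^2_xL^\infty_T}\lesssim\|P_1f\|_{L^2_xL^\infty_T}$.
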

	
	\begin{proof}[\textbf{Proof}]
		Firstly, by the unitary property of $U(t)$ on $H^s$ and the Burkholder--Davis--Gundy (BDG) inequality, it is easy to get   
		\begin{align*}
			&\quad\left\|\sum_j\int_{0}^{t} U(t-t')f_j(t',x,\omega) ~d\beta_j(t')\right\|_{L_\omega^{2}C([0,T\wedge\sigma];H^s_x)}\\
			& = \left\|\sum_j\int_{0}^{t} U(-t')f_j(t',x,\omega) ~d\beta_j(t')\right\|_{L_\omega^{2}C([0,T\wedge\sigma];H^s_x)}\\
			&\lesssim \|f_j\|_{L_\omega^{2}L_{T\wedge \sigma}^2l^2_jH^s_x}.
		\end{align*}
		For the low frequency part, by the Taylor expansion we have
		\begin{align*}
			P_1U(t)= \mathscr{F}^{-1}\chi_{(-1,1)}(\xi)e^{it\xi^3}\mathscr{F} & = \sum_{n=0}^\infty\frac{1}{n!}\mathscr{F}^{-1}\chi_{(-1,1)}(\xi)(it\xi^3)^n\mathscr{F}\\
			&  = \sum_{n=0}^\infty\frac{(-t)^n}{n!}\partial_x^{3n}P_1
		\end{align*}
		Then by the triangle inequality one has
		\begin{align*}
			&\quad \left\|P_1\sum_j\int_{0}^{t} U(t-t')f_j(t',x,\omega) ~d\beta_j(t')\right\|_{L_\omega^{2} L_x^2C_{T\wedge \sigma}}\\
			&=\left\|P_1U(t)\sum_j\int_{0}^{t} U(-t')f_j(t',x,\omega) ~d\beta_j(t')\right\|_{L^{2}_\omega L_x^2L_{T\wedge \sigma}^\infty}\\
			&\leq \sum_{n = 0}^\infty\frac{1}{n!}\left\|t^n\partial_x^{3n}P_1\sum_j\int_{0}^{t} U(-t')f_j(t',x,\omega) ~d\beta_j(t')\right\|_{L^{2}_\omega L_x^2 L_{T\wedge \sigma}^\infty}.
		\end{align*}
		Since $\|\partial_xP_1f\|_{L_x^2L_T^\infty}\leq C\|P_1f\|_{L_x^2L_T^\infty}$, we can control the upper term by
		\begin{align*}
			&\quad\sum_{n = 0}^\infty\frac{(CT)^n}{n!}\left\|P_1\sum_j\int_{0}^{t} U(-t')f_j(t',x,\omega) ~d\beta_j(t')\right\|_{L^2_\omega L_x^2 L_{T\wedge \sigma}^\infty}\\
			&\lesssim e^{CT} \left\|\sum_j\int_{0}^{t} P_1U(-t')f_j(t',x,\omega) ~d\beta_j(t')\right\|_{L_x^2 L^{2}_\omega L_{T\wedge \sigma}^\infty}.
		\end{align*}
		Then by the BDG inequality, we can control it by
		\begin{equation*}
			\|P_1U(-t')f_j(t,x,\omega)\|_{L_x^2 L^2_\omega L^2_{T\wedge\sigma}l^2_j}\lesssim \|f_j\|_{L_\omega^2L_{T\wedge \sigma}^2l^2_jH^s_x}.
		\end{equation*}
		By the definition of $X^{s,b}_T$, the It\^{o} isometry, and the Plancherel identity, one has
		\begin{align*}
			&\quad \left\|\sum_j\int_{0}^{t} U(t-t')f_j(t',x,\omega) ~d\beta_j(t')\right\|_{L_\omega^2X^{s,b}_{T\wedge \sigma}}^2\\
			&\lesssim \int_{\Omega}\int_{\mathbb{R}}\int_0^{T\wedge \sigma}(1+t^{-2b})\left|\sum_j\int_{0}^{t} \mathscr{F}_x(U(-t')J^s_xf_j(t',x,\omega)) ~d\beta_j(t')\right|^2~dtd\xi d\omega\\
			&\quad + \int_{\Omega}\int_{\mathbb{R}}\int_0^{T\wedge  \sigma}\int_0^t\left|\sum_j\int_{t'}^{t} \frac{\mathscr{F}_x(U(-\tilde{t})J^s_xf_j(\tilde{t},x,\omega))}{|t-t'|^{1/2+b}} ~d\beta_j(\tilde{t})\right|^2~dt' dtd\xi d\omega\\
			&= \int_{\Omega}\int_{\mathbb{R}}\int_0^{T\wedge \sigma}(1+t^{-2b})\sum_j\int_{0}^{t}|\mathscr{F}_x(U(-t')J^s_xf_j(t',x,\omega)) |^2~dt'dtd\xi d\omega\\
			&\quad+ \int_{\Omega}\int_{\mathbb{R}}\int_0^{T\wedge \sigma}\int_0^t\sum_j\int_{t'}^{t} \frac{|\mathscr{F}_x(U(-\tilde{t})J^s_xf_j(\tilde{t},x,\omega))|^2}{|t-t'|^{1+2b}} ~d\tilde{t}dt' dtd\xi d\omega\\
			&= \int_{\Omega}\int_{\mathbb{R}}\int_0^{T\wedge \sigma}(1+t^{-2b})\sum_j\int_{0}^{t} |J^s_xf_j(t',x,\omega) |^2~dt'dtdx d\omega\\
			&\quad + \int_{\Omega}\int_{\mathbb{R}}\int_0^{T\wedge \sigma}\int_0^t\sum_j\int_{t'}^{t} \frac{|J^s_xf_j(\tilde{t},x,\omega)|^2}{|t-t'|^{1+2b}} ~d\tilde{t}dt' dtdx d\omega\\
			&\lesssim \sum_j\int_{\Omega}\int_{\mathbb{R}}\int_0^{T\wedge \sigma}T^{1-2b} |J^s_xf_j(t',x,\omega) |^2~dt'dx d\omega\\
			&\lesssim T^{1-2b}\|f_j\|_{L^2_\omega L^2_{T\wedge \sigma }l^2_j H^s_x}^2.
		\end{align*}
		We conclude the proof due to $b<1/2$, $0<T<1$.
	\end{proof}	
	
	Combining Lemmas \ref{hilbertschmidt}--\ref{stochasticesti}, we have:
	\begin{prop}\label{stochesti}
		Let $0\leq b<1/2$, $0<T<1$, and $\Phi$ be the operator defined by \eqref{kernel}. Then for any stopping time $\sigma\geq 0$, we have 
		$$\left\|\int_{0}^{t} U(t-t')(\Phi dW_{t'})\right\|_{L_\omega^2Z^{s,b}_{T\wedge \sigma}}
		\lesssim T^{1/2}\|J^s_xk\|_{L^2_{x,y}},\quad \forall~s\in \mathbb{R}$$
		and
		$$
		\left\|\int_{0}^{t} U(t-t')(u\Phi dW_{t'})\right\|_{L_\omega^2Z^{s,b}_{T\wedge \sigma}}\lesssim T^{1/2}\|u\|_{L^2_\omega Z^{s,b}_{T\wedge\sigma}}\| J^s_xk\|_{L^\infty_x L^2_y},\quad \forall~s\geq 0.
		$$
	\end{prop}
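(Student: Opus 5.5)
The plan is to write the stochastic integrals in the form required by Lemma \ref{stochasticesti} and then use Lemmas \ref{hilbertschmidt} and \ref{fractionleib} to bound the resulting $L^2_\omega L^2_{T\wedge\sigma} l^2_j H^s_x$ norm of the integrands. Expanding $W_t=\sum_j \beta_j(t)e_j$ gives
\begin{equation*}
\int_0^t U(t-t')(\Phi dW_{t'})=\sum_j\int_0^t U(t-t')\Phi e_j\,d\beta_j(t'),\qquad
\int_0^t U(t-t')(u\Phi dW_{t'})=\sum_j\int_0^t U(t-t')u(t')\Phi e_j\,d\beta_j(t').
\end{equation*}
So we take $f_j=\Phi e_j$, which is deterministic, in the additive case, and $f_j=u(t')\Phi e_j$ in the multiplicative case. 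In the multiplicative case $u$ is adapted, so $f_j$ has the measurability required in Lemma \ref{stochasticesti}.

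\textbf{Additive case.} Lemma \ref{stochasticesti} bounds the left-hand side by $\|\Phi e_j\|_{L^2_\omega L^2_{T\wedge\sigma}l^2_jH^s_x}$. The integrand does not depend on $t'$ or $\omega$, so this equals
$$\|(T\wedge\sigma)^{1/2}\|_{L^2_\omega}\,\|\Phi e_j\|_{l^2_jH^s_x}\le T^{1/2}\|\Phi e_j\|_{l^2_jH^s_x}.$$
By Lemma \ref{hilbertschmidt}, $\|\Phi e_j\|_{l^2_jH^s_x}=\|k\|_{L^2_yH^s_x}=\|J^s_xk\|_{L^2_{x,y}}$. This holds for every $s\in\mathbb{R}$, since Lemmas \ref{hilbertschmidt} and \ref{stochasticesti} allow any $s$.

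\textbf{Multiplicative case.} For $s\ge0$, Lemma \ref{stochasticesti} gives the bound $\|u\Phi e_j\|_{L^2_\omega L^2_{T\wedge\sigma}l^2_jH^s_x}$. For each fixed $t'$ and $\omega$, Lemma \ref{fractionleib} gives
$$\|u(t')\Phi e_j\|_{l^2_jH^s_x}\lesssim\|u(t')\|_{H^s_x}\|J^s_xk\|_{L^\infty_xL^2_y}.$$
Taking the $L^2$ norm in $t'$ over $[0,T\wedge\sigma]$ is bounded by $(T\wedge\sigma)^{1/2}\sup_{t'\le T\wedge\sigma}\|u(t')\|_{H^s}\le T^{1/2}\|u\|_{C_{T\wedge\sigma}H^s_x}$. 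Since $\|u\|_{C_{T\wedge\sigma}H^s_x}\le\|u\|_{Z^{s,b}_{T\wedge\sigma}}$ by the definition \eqref{defixsbt}, taking $L^2_\omega$ yields the second estimate.

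The only delicate point is the factor $T^{1/2}$. Lemma \ref{stochasticesti} as stated gives no power of $T$, so the gain must come from the $L^2_t$ norm of the integrand on an interval of length at most $T$, as above. It is not hidden in the $Z$-norm constants, which are uniform for $0<T<1$. One should also justify interchanging the sum over $j$ with the stochastic integral, i.e. that the series converges in $L^2_\omega$. This follows from the finiteness of the right-hand sides, by applying the same estimates to tails of the sum, a standard Cauchy-sequence argument.
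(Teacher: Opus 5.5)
Your proposal is correct and is exactly the argument the paper intends. The paper obtains the proposition simply by ``combining Lemmas \ref{hilbertschmidt}--\ref{stochasticesti}'', which amounts to four steps:
\begin{itemize}
\item applying Lemma \ref{stochasticesti} with $f_j=\Phi e_j$ or $f_j=u\Phi e_j$;
\item bounding the integrand via Lemma \ref{hilbertschmidt} or Lemma \ref{fractionleib};
\item extracting the factor $T^{1/2}$ from the $L^2_t$ norm over $[0,T\wedge\sigma]$;
\item using $\|u\|_{C_{T\wedge\sigma}H^s_x}\le\|u\|_{Z^{s,b}_{T\wedge\sigma}}$ in the multiplicative case.
\end{itemize}
Your remarks on the adaptedness of $u\Phi e_j$ and on the convergence of the series in $j$ just make explicit details the paper leaves implicit.
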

	
	\section{The well-posedness of truncated equations}\label{wellposednesstrun}	
	In this section, we prove the global well-posedness of \eqref{truncated_eq}.
	\begin{prop}\label{localwellres}
		Fix $3/8<b< 1/2$ and let $R,T>0$, $s\geq 0$, $J^s_xk \in {L^\infty_x L^2_y}$. For any $u_0\in H^s$, there exists a unique solution $u$ of \eqref{truncated_eq} in $L^2_\omega Z^{s,b}_T$ and
		\begin{equation*}
			\|u\|_{L^2_\omega Z^{s,b}_T}\leq C(\|J^s_xk\|_{L_x^\infty L_y^2},R,T)\|u_0\|_{H^s}.
		\end{equation*}
	\end{prop}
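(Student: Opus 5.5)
We will prove Proposition \ref{localwellres} by a fixed point argument in the space of adapted processes $L^2_\omega Z^{s,b}_{T_0}$ for a small deterministic time $T_0=T_0(\|J^s_xk\|_{L^\infty_xL^2_y},R)<1$. We then iterate this on the intervals $[jT_0,(j+1)T_0]$ to reach an arbitrary $T$. The crucial point is that $T_0$ depends only on $R$ and on the kernel, not on $u_0$. This is because the truncation makes the nonlinear term effectively Lipschitz with constant $\lesssim R$.

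\textbf{Contraction on $[0,T_0]$.} Define
\[
\Gamma u(t)=U(t)u_0-\int_0^tU(t-t')(B_R(u)(t'))_x\,dt'+\int_0^tU(t-t')(u(t')\Phi dW_{t'}).
\]
For the linear term, the remark after \eqref{defixsbt} gives $\|U(t)u_0\|_{Z^{s,b}_{T_0}}\lesssim\|u_0\|_{H^s}$. The $P_1$ part is handled by the same Taylor expansion used in Lemma \ref{stochasticesti}, and the $X^{s,b}_{T_0}$ part is computed directly from the definition of the norm.
\begin{itemize}
\item By Proposition \ref{trunbiliearestis}, applied pathwise and then in $L^2_\omega$, the Duhamel term is bounded by $CRT_0^\varepsilon\|u\|_{L^2_\omega Z^{s,b}_{T_0}}$. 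Its difference satisfies the analogous bound in $Z^{0,b}_{T_0}$.
\item By Proposition \ref{stochesti} with $\sigma=T_0$, the stochastic term is bounded by $CT_0^{1/2}\|J^s_xk\|_{L^\infty_xL^2_y}\|u\|_{L^2_\omega Z^{s,b}_{T_0}}$. Since this term is linear in $u$, the same bound holds for differences.
\item Adaptedness of $\Gamma u$ is preserved, because $B_R(u)(t)$ depends only on $u|_{[0,t]}$ through $\|\cdot\|_{Z_t}$, $\|\cdot\|_{X_t}$.
\end{itemize}
Choosing $T_0$ with $C(RT_0^\varepsilon+T_0^{1/2}\|J^s_xk\|_{L^\infty_xL^2_y})\le1/2$ makes $\Gamma$ a contraction in $L^2_\omega Z^{0,b}_{T_0}$. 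It also maps a ball of $L^2_\omega Z^{s,b}_{T_0}$ of radius $2C\|u_0\|_{H^s}$ into itself. Since the difference estimate is only at level $s=0$, we argue in the standard way: the ball in $Z^{s,b}$ with the $Z^{0,b}$ metric is complete, by Fatou and lower semicontinuity of the norms. This yields a unique fixed point, with $\|u\|_{L^2_\omega Z^{s,b}_{T_0}}\le 2C\|u_0\|_{H^s}$.

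\textbf{Extension to $[0,T]$.} This step is the main obstacle. The truncation $B_R$ involves $\|u\|_{X^{0,b}_t}$ and $\|P_1u\|_{L^2_xC_t}$, which are norms taken from time $0$, not from the restart time. A naive restart at $t_1=T_0$ with data $u(t_1)$ would therefore change the equation. The plan is instead to solve on $[0,2T_0]$ directly, treating $u|_{[0,T_0]}$ as known. We look for $v$ on $[0,2T_0]$ with $v=u$ on $[0,T_0]$ and set up the contraction for $v$ on the whole of $[0,2T_0]$.

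For the differences, we use estimates with the $X^{0,b}_{2T_0}$ norm restricted to $[T_0,2T_0]$. Here we rely on the gluing property behind Lemma \ref{lem:equa-norm}: for $b<1/2$, the $X^{s,b}$ norm of a function vanishing on $[0,T_0]$ is comparable to its norm on $[T_0,2T_0]$, with a shifted weight $(t-T_0)^{-2b}$. The Lipschitz bounds of Lemma \ref{truncated_equation} and Proposition \ref{trunbiliearestis} then apply to $v_1-v_2$, which is supported in $[T_0,2T_0]$, with the same constants. The smallness factors $T_0^\varepsilon$ and $T_0^{1/2}$ come from the length of the new interval; I expect to verify this by rerunning the proofs of Lemmas \ref{inhomoesti} and \ref{stochasticesti} on the time-shifted interval.

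The stochastic convolution from $T_0$ can be rewritten using the Markov/shift property of Brownian motion, with the stopping-time form of Lemma \ref{stochasticesti} covering the adapted integrand. The a priori bound on each step is $\|u\|_{L^2_\omega Z^{s,b}_{(j+1)T_0}}\le C\|u\|_{L^2_\omega Z^{s,b}_{jT_0}}$. After $\lceil T/T_0\rceil$ steps this gives the stated bound $C(\|J^s_xk\|_{L^\infty_xL^2_y},R,T)\|u_0\|_{H^s}$. Uniqueness on $[0,T]$ follows from the contraction on each step.
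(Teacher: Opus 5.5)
Your proposal is correct and is essentially the paper's proof: a contraction in $L^2_\omega Z^{0,b}$ on a short interval whose length depends only on $R$ and the kernel, then extension to $[0,T]$ by re-solving on the next subinterval with the truncation evaluated on the glued function (the paper's $\tilde v$ and $\tilde B_R$, which is exactly your point about the norms being taken from time $0$), and finally persistence of $H^s$ regularity. The only difference is cosmetic: you get the $Z^{s,b}$ bound from an invariant $Z^{s,b}$-ball that is complete in the $Z^{0,b}$ metric, whereas the paper bounds the Picard iterates $\mathcal{T}_R^n(0)$ uniformly in $L^2_\omega Z^{s,b}_{T_*}$ and passes to the limit (in either version, lower semicontinuity only gives an $L^\infty_T H^s$ bound, so continuity in $H^s$ of the limit should be read off from $u=\Gamma u$).
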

	\begin{proof}[\textbf{Proof}]
		We firstly show $u\in L^2_\omega Z^{0,b}_{T_*}$ for some $0<T_*<1$ through a fixed point argument. Let
		\begin{align*}
			\mathcal{T}_R:u\mapsto
			U(t)u_0-\int_{0}^{t} U(t-t')(B_R(u))_x~dt'+\int_{0}^{t}U(t-t') (u\Phi dW_{t'}).
		\end{align*}
		By Propositions \ref{biliearesti}, \ref{trunbiliearestis}, and \ref{stochesti}, for $0<\varepsilon<2b-3/4$ (one may fix $\varepsilon = b-3/8$) we have
		\begin{align*}
			\|\mathcal{T}_R u\|_{L^2_\omega Z^{0,b}_{T_*}} &\lesssim \|u_0\|_{L^2}+T_*^{\varepsilon}R\|u\|_{L^2_\omega Z^{0,b}_{T_*}}+T_*^{1/2}\|k\|_{L_x^\infty L_y^2}\|u\|_{L^2_\omega Z^{0,b}_{T_*}}\\
			&\lesssim \|u_0\|_{L^2}+(T_*^{\varepsilon}R+T_*^{1/2}\|k\|_{L_x^\infty L_y^2})\|u_R\|_{L^2_\omega Z^{0,b}_{T_*}}
		\end{align*}
		and
		$$\|\mathcal{T}_R u- \mathcal{T}_R v\|_{L^2_\omega Z^{0,b}_{T_*}}\lesssim (T_*^{\varepsilon}R+T_*^{1/2}\|k\|_{L_x^\infty L_y^2})\|u-v\|_{L^2_\omega Z^{0,b}_{T_*}}.$$
		Thus, by choosing $T_* = T_*(R,\|k\|_{L_x^\infty L_y^2},\varepsilon)<1$ sufficiently small, we obtain that $\mathcal{T}_R$ is a strict contraction on $L^2_\omega Z^{0,b}_{T_*}$.   
		
		Since $T_*$ relies on $R$, $\|k\|_{L_x^\infty L_y^2}$ and $\varepsilon$ only, we can obtain the solution on $[0,T]$ by dividing $[0,T]$ into finite numbers of intervals shorter than $T_*$ and using contraction mapping argument on each subinterval. For example, we construct the solution on $(T_0,T_0+T_*]$, when we already have solution on $[0,T_0]$.  Consider the mapping on $\{v\in L^2_\omega Z^{0,b}_{T_*}: v(t)~\mbox{is}~\mathcal{F}_{t+T_0}\mbox{-adapted}, v(0) = u(T_0)\}$:
		\begin{equation*}
			\tilde{\mathcal{T}}_R:v\mapsto U(t)u(T_0)-\int_{0}^{t} U(t-t')(\tilde{B}_R(v))_x~dt'+\int_{0}^{t}U(t-t') (v\Phi dW_{t'+T_0}),
		\end{equation*}
		where $v(t)$ is $\mathcal{F}_{t+T_0}$-adapted and 
		$$\tilde{B}_R(v)(t) = \left(\theta(\|P_1\tilde{v}\|_{L_x^2C_{T_0+t}}/R)P_1v(t)+\theta(\|\tilde{v}\|_{X^{s,b}_{T_0+t}}^2/R^2)P_{>1}v(t)\right)^2$$ 
		with
		\begin{align*}
			\tilde{v}(t) = u(t),~0\leq t\leq T_0;\quad\tilde{v}(t) = v(t),~T_0<t\leq T_*.
		\end{align*}
		Similar to the proof of Proposition \ref{trunbiliearestis}, we also have 
		$$
		\left\|\int_{0}^{t} U(t-t')(\tilde{B}_R(v)-\tilde{B}_R(w))_x~dt'\right\|_{Z^{0,b}_{T_*}}\lesssim T_*^{\varepsilon}R\|v-w\|_{Z^{0,b}_{T_*}}.$$
		Then one can get the contractility of $\tilde{\mathcal{T}}_R$ on $L^2_\omega Z^{0,b}_{T_*}$, which means that there exists a unique $\mathcal{F}_{t+T_0}$ adapted solution $v$ of $\tilde{\mathcal{T}}_Rv = v$. Let $u(t+T_0) = v(t)$, $\forall~t\in [0,T_*]$. Then we obtain a solution of \eqref{truncated_eq} on $[0,T_0+T_*]$. Thus by the iteration we have a unique solution of $u\in L^2_\omega Z_T^{0,b}$ for any $0<T<\infty$ and
		$$\|u\|_{L^2_\omega Z_T^{0,b}}\leq C(\|k\|_{L_x^\infty L_y^2},R,T)\|u_0\|_{L^2}.$$		
		
		If $u_0\in H_x^s$, $\|J^s_xk\|_{L_x^\infty L_y^2}<\infty$, by Lemmas \ref{truncated_equation}, Corollary \ref{generals}, Proposition \ref{stochesti}, for any $n\geq 1$ we have
		$$\|\mathcal{T}^{n}_R(0)\|_{L^2_\omega Z_{T_*}^{s,b}}\lesssim \|u_0\|_{H^s_x}+(T_*^{\varepsilon}R+T_*^{1/2}\|J^s_xk\|_{L_x^\infty L_y^2})\|\mathcal{T}^{n-1}_R(0)\|_{L^2_\omega  Z_{T_*}^{s,b}}.$$
		Thus, we can choose $T_*=T_*(R,\varepsilon, \|J^s_xk\|_{L_x^\infty L_y^2})$ sufficiently small such that 
		$$\|\mathcal{T}_R^n(0)\|_{L^2_\omega  Z_{T_*}^{s,b}}\lesssim \|u_0\|_{H_x^s},\quad \forall~n.$$ 
		Since the solution $u$ of \eqref{truncated_eq} is the limit of $\mathcal{T}^n_R(0)$ in $L^2_\omega  Z_{T_*}^{0,b}$ as $n\rightarrow\infty$, we conclude that the
		solution $u\in L^2_\omega  Z_{T_*}^{s,b}$ and $\|u\|_{L^2_\omega  Z_{T_*}^{s,b}}\lesssim \|u_0\|_{H_x^s}$. Finally, by dividing $[0,T]$ into small intervals and the iteration as before, we finish the proof.
	\end{proof}
	
	\section{The proof of Theorem \ref{globalwellm}}\label{global_solution}
	For $u_0\in L^2$, let $u_R$ be the solution of \eqref{truncated_eq}. Since for any $T>0$, $R_2>R_1>0$, we have $u_{R_2}(t)=u_{R_1}(t)$, $t\in [0, \sigma_{R_1}^T(u_{R_1})]$, $ \text{a.s.}$   $\mathbb{P}$, we would define $u(t) = u_R(t)$, $\forall~t\leq \sigma_R^T(u_R)$. To get the global well-posedness of \eqref{mskdv}, we need to show 
	\begin{equation}\label{global_aim}
		\sigma^T:=\lim_{R\uparrow \infty} \sigma^T_R(u_R) =T, \quad \mathrm{a.s.}~ \mathbb{P}.
	\end{equation}
	To prove \eqref{global_aim}, we first establish a priori estimate for $\|u\|_{L^{2}_\omega L_{\sigma^T}^\infty L_x^2}$. Let
	\begin{align*}
		M(u) = \int_{\mathbb{R}}u^2 ~dx.
	\end{align*}

	Firstly we show the priori estimate for solutions with high regularity.
	\begin{lemma}\label{lem:regularity_priori_estimate}
		Let $s\geq 3$. Suppose $u_0\in H_x^s$ and $ J^s_xk\in L_x^\infty L_y^2$. For any fixed $ R,T>0$, let $u_R$ be the solution constructed in Proposition \ref{localwellres}.  Then for any stopping time $0\leq \sigma\leq \sigma_R^T(u_R)$ almost surely, we have 
		\begin{equation}\label{L_x^2_priori_esti}
			\|u_R\|_{L_\omega^{p}L_{\sigma}^\infty L_x^2}\leq C(p,T,\|k\|_{L_x^\infty L_y^2})\|u_0\|_{L_x^2}, \quad \forall~2\leq p<\infty.
		\end{equation}
	\end{lemma}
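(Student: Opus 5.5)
We apply It\^{o}'s formula to $M(u_R(t)) = \|u_R(t)\|_{L_x^2}^2$ and use the cancellation of the KdV nonlinearity. On $[0,\sigma]$ with $\sigma\le\sigma^T_R(u_R)$, the norm $\|u_R\|_{Z^{0,b}_t}$ stays below $R$. Since $\theta=1$ on $[-1,1]$ but $\theta(\cdot/R)$ is only guaranteed to be $1$ when its argument is at most $R$, we must check the two truncation factors. The low-frequency argument $\|P_1u_R\|_{L_x^2C_t}\le \|u_R\|_{Z^{0,b}_t}<R$, so its factor is $1$. The high-frequency argument satisfies $\|u_R\|^2_{X^{0,b}_t}/R^2<1$, so its factor is also $1$. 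Hence $B_R(u_R)(t)=u_R(t)^2$ for $t\le\sigma$, and $u_R$ solves the untruncated mild equation \eqref{mild} there. Since $s\ge3$ and $u_R\in Z^{s,b}_T\subset C_TH^3_x$, the mild solution is a strong solution in $H^{s-3}\supset L^2$. Concretely, $du=-(u_{xxx}+(u^2)_x)dt+u\Phi dW$ holds as an identity in $H^{s-3}_x$. Because $s\geq 3$, $u_{xxx}\in L^2$, so It\^{o}'s formula for the functional $M$ on $L^2$ can be applied. If needed, first mollify by $P_{\le N}$, apply It\^{o} to $\|P_{\le N}u\|_{L^2}^2$, and let $N\to\infty$.

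It\^{o}'s formula gives
\begin{align*}
M(u(t\wedge\sigma)) &= M(u_0) - 2\int_0^{t\wedge\sigma}\!\!\int_{\mathbb{R}} u\,(u_{xxx}+(u^2)_x)\,dx\,dt' + 2\sum_j\int_0^{t\wedge\sigma}\!\!\int_{\mathbb{R}} u^2\,\Phi e_j\,dx\,d\beta_j(t')\\
&\quad+\int_0^{t\wedge\sigma}\|u\Phi e_j\|^2_{l^2_jL^2_x}\,dt'.
\end{align*}
The deterministic integral vanishes, since $\int u u_{xxx}\,dx=0$ and $\int u(u^2)_x\,dx=\tfrac23\int (u^3)_x\,dx=0$ for $u\in H^3$.

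The It\^{o} correction is handled by Lemma \ref{fractionleib} with $s=0$:
$$\|u\Phi e_j\|^2_{l^2_jL^2_x}=\|u(x)k(x,y)\|^2_{L^2_{x,y}}\le \|k\|^2_{L^\infty_xL^2_y}\,M(u).$$
For the martingale term we write $\int u^2\Phi e_j\,dx=(u,\,u\Phi e_j)_{L^2_x}$. Its quadratic variation is then bounded by
$$\int_0^{t\wedge\sigma}\sum_j (u,u\Phi e_j)^2\,dt'\le \int_0^{t\wedge\sigma}M(u)\,\|u\Phi e_j\|^2_{l^2_jL^2_x}\,dt'\le \|k\|^2_{L^\infty_xL^2_y}\int_0^{t\wedge\sigma}M(u)^2\,dt'.$$

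Set $q=p/2\ge1$ and $Y(t)=\sup_{t'\le t\wedge\sigma}M(u(t'))$. Take the supremum in time, raise to the power $q$, and take expectations. The BDG inequality then gives
$$\mathbb{E}Y(t)^q\lesssim_q M(u_0)^q + \|k\|^{2q}\,\mathbb{E}\Big(\int_0^t Y\,dt'\Big)^q + \|k\|^{q}\,\mathbb{E}\Big(\int_0^{t} Y^2\,dt'\Big)^{q/2}.$$
For the last term we use $\int_0^tY^2\le Y(t)\int_0^tY$ and Young's inequality, which yields the bound
$$\tfrac12\mathbb{E}Y(t)^q + C\,\mathbb{E}\Big(\int_0^tY\Big)^q.$$
The first piece is absorbed into the left-hand side. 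This is legitimate because $\mathbb{E}Y^q<\infty$ a priori: $Y\le \|u\|^2_{Z^{0,b}_\sigma}\le R^2$ on $[0,\sigma]$, so it is bounded. Then Hölder in time gives $(\int_0^tY)^q\le t^{q-1}\int_0^tY^q$, and Gronwall's inequality yields
$$\mathbb{E}Y(T)^q\le C(p,T,\|k\|_{L^\infty_xL^2_y})\,M(u_0)^q.$$
This is exactly \eqref{L_x^2_priori_esti}.

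The main obstacle is justifying It\^{o}'s formula for the mild solution. The statement requires $s\ge3$ precisely so that $u_{xxx}\in L^2$ and the pairing $\int u\,u_{xxx}\,dx$ makes sense and vanishes. I would do the justification via frequency truncation $P_{\le N}$. After truncation, the nonlinear commutator term $\int P_{\le N}u\,P_{\le N}(u^2)_x\,dx$ tends to $0$ as $N\to\infty$ by dominated convergence in $L^1_{t,\omega}$, using $u\in C_TH^3_x$. The dispersive term vanishes identically for each $N$.
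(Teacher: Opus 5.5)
Your proof is correct and follows essentially the same route as the paper. Both arguments apply It\^{o}'s formula to $M(u_R)$ with the KdV terms cancelling, bound the It\^{o} correction and the BDG quadratic variation by $\|k\|_{L^\infty_xL^2_y}$, absorb half of the supremum via Cauchy--Schwarz/Young, and close with Gronwall. You also supply justifications the paper leaves implicit: that both truncation factors equal $1$ up to $\sigma$, that It\^{o}'s formula is valid for $s\ge 3$, and that $\mathbb{E}Y^q\le R^{2q}<\infty$, which is what licenses the absorption. One harmless slip: you mean $H^{s-3}\subset L^2$, not $\supset$.
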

	\begin{proof}[\textbf{Proof}]
		By the It\^{o} formula, we  have  
		\begin{equation}\label{ito_regularity}
			\begin{aligned}
				&\quad M(u_R(t\wedge\sigma))\\
				& = M(u_0) +\int_0^{t\wedge\sigma} \mathrm{tr}(\Phi^*u_R^2\Phi)~dt' +2\int_0^{t\wedge\sigma}(u_R,u_R\Phi dW_{t'}).
			\end{aligned}
		\end{equation}
		By the BDG inequality, the  H\"{o}lder inequality, and Lemma \ref{fractionleib}, for $2\leq 
		p<\infty$ we have
		\begin{align*}
			&\quad\|M(u_R)\|_{L_\omega^{p/2}L_{t\wedge \sigma}^\infty}\\
			&\leq M(u_0)+\|\|u_R\Phi e_j\|_{l^2_jL^2_x}^2\|_{L^{p/2}_\omega L^1_{t\wedge\sigma}}+2C(p)\|\|u_R^2\Phi e_j\|_{l^2_j L^1_x}\|_{L_\omega^{p/2} L^2_{t\wedge\sigma}}\\
			&\leq M(u_0)+\|M(u_R)\|k\|_{L_x^\infty L_y^2}^2\|_{L_\omega^{p/2}L^1_{t\wedge\sigma}}+2C(p)\|M(u_R)\|k\|_{L_x^\infty L_y^2}\|_{L_\omega^{p/2} L^2_{t\wedge\sigma}}.
		\end{align*}
		By the Cauchy--Schwarz inequality one has
		\begin{align*}
			&\quad 2C(p)\|M(u_R)\|k\|_{L_x^\infty L_y^2}\|_{L_\omega^{p/2} L^2_{t\wedge\sigma}}\\
			&\leq 2C(p)\|k\|_{L_x^\infty L_y^2}\|M(u_R)\|_{L_\omega^{p/2} L^1_{t\wedge\sigma}}^{1/2}\|M(u_R)\|_{L_\omega^{p/2} L^\infty_{t\wedge\sigma}}^{1/2}\\
			&\leq \frac{1}{2}\|M(u_R)\|_{L_\omega^{p/2} L^\infty_{t\wedge\sigma}}+2C(p)^2\|k\|^2_{L_x^\infty L_y^2}\|M(u_R)\|_{L_\omega^{p/2} L^1_{t\wedge\sigma}}.
		\end{align*}
		Thus 
		$$\|M(u_R)\|_{L_\omega^{p/2}L_{t\wedge \sigma}^\infty}\\
		\leq 2M(u_0)+(2+4C(p)^2)\|k\|_{L_x^\infty L_y^2}^2\|M(u_R)\|_{L_\omega^{p/2} L^1_{t\wedge\sigma}}.$$
		Then by the Gronwall inequality we have
		\begin{align*}
			\|M(u_R)\|_{L_\omega^{p/2}L_{\sigma}^\infty} \leq 2e^{(2+4C(p)^2)\|k\|_{L_x^\infty L_y^2}^2T}M(u_0).
		\end{align*}
		We complete the proof.
	\end{proof}
	
	Now we would approximate general $k$ and initial data $u_0$ by high regularity functions and show the priori estimate for general solutions. We first establish a technical lemma.
	
	\begin{lemma}\label{lem:convergence_need_to_explain}
		Let $f(\omega,x)\in L_\omega^2L_x^2$ and $k(x,y)\in L_x^\infty L_y^2 $. Then, we have 
		\begin{equation}\label{convergence_result}
			\lim_{n\uparrow\infty}\mathbb{E}\left\|f(k-k_n) \right\|_{L_{x,y}^2}^2=0.
		\end{equation}
	\end{lemma}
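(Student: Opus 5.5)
The plan is to fix a concrete approximating sequence $k_n$ and then conclude by dominated convergence. The statement does not specify $k_n$. For the later approximation argument we need $k_n$ regular enough to apply Lemma \ref{lem:regularity_priori_estimate}, i.e.\ $J^s_xk_n\in L^\infty_xL^2_y$ for $s\geq 3$. We also need $k_n$ to be controlled uniformly by $k$. So I would take a double mollification
\begin{equation*}
k_n(x,y)=\int_{\mathbb{R}^2}\rho_n(x-x')\eta_n(y-y')k(x',y')~dx'dy',
\end{equation*}
where $\rho_n,\eta_n$ are standard nonnegative mollifiers of unit mass. By Minkowski's inequality, $\|k_n(x,\cdot)\|_{L^2_y}\leq \|k\|_{L^\infty_xL^2_y}$ for every $x$. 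All $x$-derivatives of $k_n$ are bounded in $L^\infty_xL^2_y$ by $C_n\|k\|_{L^\infty_xL^2_y}$, so $J^s_xk_n\in L^\infty_xL^2_y$ for every $s$ (for integer $s$ directly, in general by interpolation).

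Next, write
\begin{equation*}
\mathbb{E}\|f(k-k_n)\|_{L^2_{x,y}}^2=\int_\Omega\int_{\mathbb{R}}|f(\omega,x)|^2\,\|k(x,\cdot)-k_n(x,\cdot)\|_{L^2_y}^2~dx\,d\mathbb{P}.
\end{equation*}
The integrand is dominated by $4|f|^2\|k\|^2_{L^\infty_xL^2_y}$, which is integrable since $f\in L^2_\omega L^2_x$. So it suffices to show that $\|k(x,\cdot)-k_n(x,\cdot)\|_{L^2_y}\to0$ for a.e.\ $x$. Set $K(x)=k(x,\cdot)$, viewed as an $L^2_y$-valued function, and let $E_n$ denote convolution with $\eta_n$ in $y$, so that $k_n(x,\cdot)=E_n\big((\rho_n*K)(x)\big)$. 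Since $\|E_n\|_{L^2_y\to L^2_y}\leq 1$, we can split
\begin{equation*}
\|K(x)-k_n(x,\cdot)\|_{L^2_y}\leq \|(I-E_n)K(x)\|_{L^2_y}+\|K(x)-(\rho_n*K)(x)\|_{L^2_y}.
\end{equation*}
The first term tends to $0$ for every $x$ with $K(x)\in L^2_y$, because mollification converges in $L^2$.

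The main point is the second term. Because $L^2_y$ is separable, $K$ is strongly measurable, and it is locally Bochner integrable since it is bounded. The vector-valued Lebesgue differentiation theorem then gives
\begin{equation*}
\int\rho_n(x-x')\|K(x')-K(x)\|_{L^2_y}~dx'\to0
\end{equation*}
for a.e.\ $x$, i.e.\ at every Lebesgue point of $K$. If one prefers to avoid the vector-valued theorem, the scalar version can be applied to $x\mapsto\|K(x)-g\|_{L^2_y}$ for $g$ ranging over a countable dense subset of $L^2_y$. In either case the second term tends to $0$ a.e.

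Dominated convergence then yields \eqref{convergence_result}. I expect the only delicate step to be this measurability and a.e.\ Lebesgue point argument for the $L^2_y$-valued function $K$. The rest consists of Minkowski's inequality and the standard mollifier bounds.
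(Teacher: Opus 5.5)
Your argument is correct, and it takes a genuinely different and shorter route than the paper.

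\textbf{Comparison with the paper.} Both proofs start from the domination $\|f(k-k_n)\|_{L^2_{x,y}}\lesssim\|f\|_{L^2_x}\|k\|_{L^\infty_xL^2_y}$ and dominated convergence, but the paper applies this only in $\omega$. It then has to prove $\|f(k-k_n)\|_{L^2_{x,y}}\to0$ for each fixed $f\in L^2_x$, which it does on the Fourier side. It partitions $fk_{>n}$ in frequency and uses density of band-limited and of compactly supported $f$. This reduces to the local convergence $\|k_{n/100<\cdot<n}\|_{L^2_{[-c,c]}L^2_y}\to0$, obtained by splitting $k$ into $\chi_{[-2c,2c]}k\in L^2_{x,y}$ and a far part handled by kernel decay.

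You instead use Tonelli to pull out $|f(\omega,x)|^2$. This reduces everything to an $f$-independent fact: $\|k(x,\cdot)-k_n(x,\cdot)\|_{L^2_y}\to0$ for a.e.\ $x$. That is Lebesgue differentiation for the bounded $L^2_y$-valued map $x\mapsto k(x,\cdot)$. It removes all the frequency bookkeeping and every density argument in $f$. Your countable-dense-subset variant, combined with Minkowski's integral inequality, makes it fully elementary, with no Bochner theory needed.

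\textbf{One adjustment: the choice of $k_n$.} In the paper $k_n$ is not yours to choose. It is the $x$-Fourier truncation $\mathscr{F}^{-1}_\xi(\theta(\xi/n)\mathscr{F}_xk)$, i.e.\ convolution in $x$ alone with $\phi_n(z)=n\phi(nz)$. Here $\phi$ is the convolution kernel of the multiplier $\theta$, a Schwartz function with $\int\phi=\theta(0)=1$. This is the $k_m$ fed into Proposition \ref{prop:u_R_L_x^2_priori_esti}.

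Your proof covers this $k_n$ with two changes. First, drop $\eta_n$, which is superfluous anyway. Second, since $\phi$ is neither nonnegative nor compactly supported, use the approximate-identity form of Lebesgue differentiation. Because $|\phi(z)|\lesssim\langle z\rangle^{-2}$ is an integrable radially decreasing majorant, one still has $\int|\phi_n(x-x')|\,\|k(x',\cdot)-k(x,\cdot)\|_{L^2_y}\,dx'\to0$ at every Lebesgue point. The region $|x-x'|>\delta$ contributes at most $2\|k\|_{L^\infty_xL^2_y}\int_{|z|>n\delta}|\phi|$, since $k$ is bounded in $L^2_y$. The dominating function becomes $(1+\|\phi\|_{L^1})^2\|k\|_{L^\infty_xL^2_y}^2|f|^2$.

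If you prefer to keep your own $k_n$, that also suffices downstream. In that case, justify $J^s_xk_n\in L^\infty_xL^2_y$ via $J^s_xk_n=(J^s\rho_n)*_x(\eta_n*_yk)$ with $J^s\rho_n\in\mathcal{S}\subset L^1$, exactly as the paper does for its $k_m$. This is cleaner than interpolation. Also note that $J^s_x$ is a differential operator only for even integers $s$, so ``for integer $s$ directly'' is not quite right for odd $s$.
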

	\begin{proof}
		For the sake of simplicity, let us introduce following notations
		$$
		k_{>n}:=k-k_n:=k-\tilde{P}_nk:=\mathscr{F}^{-1}_\xi[\theta_n(\xi)\mathscr{F}k(\xi)](x),
		$$
		$$
		k_{a<\cdot<b}:=\tilde{P}_bk-\tilde{P}_ak =k_b-k_a,
		$$
		for any $0<a,b<\infty$.
		
		Note that 
		\begin{equation}\label{auxi_inequality_1}
			\begin{aligned}
				\left\|f(k-k_n) \right\|_{L_{x,y}^2}
				\leq&\|f\|_{L_x^2}\left(\|k\|_{L_x^\infty L_y^2}+\|\mathscr{F}^{-1}_\xi[\theta_n]\ast k\|_{L_x^\infty L_y^2}\right)\\
				\leq&\|f\|_{L_x^2}\left(\|k\|_{L_x^\infty L_y^2}+\|\mathscr{F}^{-1}_\xi[\theta_n]\|_{L_x^1}\|k\|_{L_x^\infty L_y^2}\right)\\
				\lesssim&\|f\|_{L_x^2}\|k\|_{L_x^\infty L_y^2}.
			\end{aligned}
		\end{equation}
		Thus, according to \eqref{auxi_inequality_1} and the dominated convergence theorem, we can prove \eqref{convergence_result} by illustrating
		\begin{equation}\label{auxi_lim_1}
			\begin{aligned}
				\lim_{n\uparrow\infty}\|fk_{>n}\|_{L_{x,y}^2}=0, \ \text{a.s.}~ \mathbb{P}.
			\end{aligned}
		\end{equation}
		
		We will illustrate \eqref{auxi_lim_1} basing on the following partition in the frequency space:
		\begin{equation}\label{partition}
			\begin{aligned}
				\|fk_{>n}\|_{L_{x,y}^2}\leq& \|\tilde{P}_{>n/10}(fk)\|_{L_{x,y}^2}+\|\tilde{P}_{>n/10}(fk_n)\|_{L_{x,y}^2}+\|\tilde{P}_{n/10}(f_{>n/2}k_{>n})\|_{L_{x,y}^2}\\
				&+\|\tilde{P}_{n/10}(f_{n/2}k_{>n})\|_{L_{x,y}^2}.
			\end{aligned}
		\end{equation}

		Firstly, by the monotone convergence theorem, we have 
		$$
		\lim_{n\uparrow\infty} \left\|\tilde{P}_{>n/10}(fk)\right\|_{L_{x,y}^2}=0
		$$
		and
		$$
		\lim_{n\uparrow\infty} \left\|\tilde{P}_{n/10}(f_{>n/2}k_{>n})\right\|_{L_{x,y}^2}\lesssim \lim_{n\uparrow\infty}\|f_{>n/2}\|_{L_x^2}\cdot \| k\|_{L_x^\infty L_y^2}=0.
		$$
		
		Secondly, it is clear that 
		$$
		\|\tilde{P}_{n/10}(f_{n/2}k_{>n})\|_{L_{x,y}^2}=0.
		$$
		
		Thus, according to \eqref{partition}, we only need to prove 
		\begin{equation*}
			\begin{aligned}
				\lim_{n\uparrow\infty}\|\tilde{P}_{>n/10}(fk_n)\|_{L_{x,y}^2}=0.
			\end{aligned}
		\end{equation*}
		Furthermore, since 
		$$ \lim_{n\uparrow\infty} \|\tilde{P}_{>n/10}(f_{>n/20}k_n)\|_{L_{x,y}^2}=0,\  \|\tilde{P}_{>n/10}(f_{n/20}k_{n/100})\|_{L_{x,y}^2}=0,
		$$ the above equality can be reduced to 
		\begin{equation}\label{auxi_lim_2}
			\begin{aligned}
				\lim_{n\uparrow\infty}\|\tilde{P}_{>n/10}(f_{n/20}k_{n/100<\cdot<n})\|_{L_{x,y}^2}=0.
			\end{aligned}
		\end{equation}
		
		By the fact $\{f:\hat{f}\in C_0^\infty\}$ is dense in $L^2_x$, \eqref{auxi_lim_2} can be deduced by 
		$$
		\lim_{n\uparrow\infty}\|fk_{n/100<\cdot<n}\|_{L_{x,y}^2}=0.
		$$
		
		What's more, because $C_0^\infty$ is dense in $L_x^2$, \eqref{auxi_lim_2} can be reduced to
		\begin{equation}\label{auxi_lim_3}
			\begin{aligned}
				\lim_{n\uparrow\infty}\|k_{n/100<\cdot<n}\|_{L_{[-c,c]}^2L_y^2}=0,
			\end{aligned}
		\end{equation}
		for some $c>0$.
		
		Because of $k\in L_x^\infty L_y^2$, $\chi_{[-2c,2c]}(x)k\in L_{x,y}^2$, we have 
		$$
		\lim_{n\uparrow\infty}\|\tilde{P}_{n/100<\cdot<n}(\chi_{[-2c,2c]}k)\|_{L_{x}^2L_y^2}=0.
		$$
		Thus, we only need to prove 
		\begin{equation}\label{auxi_lim_4}
			\lim_{n\uparrow\infty}\|\tilde{P}_{n/100<\cdot<n}(\chi_{[-2c,2c]^c}k)\|_{L_{[-c,c]}^2L_y^2}=0.
		\end{equation}

		Now, we illustrate \eqref{auxi_lim_4}. 
		There exists a Schwartz function $\varphi$ such that 
		$$
		\begin{aligned}
			\tilde{P}_{n/100<\cdot<n}(\chi_{[-2c,2c]^c}k)=&\int_{[-2c,2c]^c} n\varphi(n(x-z))k(z,y)dz\\
			\lesssim&\int_{[-2c,2c]^c} n|z|^{-l}n^{-l}|k(z,y)|dz,
		\end{aligned}
		$$
		which implies \eqref{auxi_lim_4}. Hence, we finish the proof of \eqref{convergence_result}.
	\end{proof}
	
	\begin{prop}\label{prop:u_R_L_x^2_priori_esti}
		Assume $u_0\in L_x^2$ and $ k\in L_x^\infty L_y^2$. For any fixed $ R,T>0$, let $u_R$ be the solution constructed in Proposition \ref{localwellres}. Then for any stopping time $0\leq \sigma\leq \sigma_R^T(u_R)$ almost surely, we have
		\begin{equation*}
			\left\|u_R\right\|_{L_\omega^pL^\infty_{\sigma}L_x^2}\leq C(p,T,\|k\|_{L_x^\infty L_y^2})\|u_0\|_{L_x^2}, \quad \forall~2\leq p<\infty.
		\end{equation*}
	\end{prop}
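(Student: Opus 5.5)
We prove Proposition \ref{prop:u_R_L_x^2_priori_esti} by approximating the data $u_0$ and the kernel $k$ with smooth ones, applying Lemma \ref{lem:regularity_priori_estimate} to the approximations, and passing to the limit using the Lipschitz estimates behind Proposition \ref{localwellres} together with Lemma \ref{lem:convergence_need_to_explain}.

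\textbf{Approximating problems.} Let $u_{0,n} = \tilde P_n u_0 \in H^3_x$ and $k_n = \tilde P_n k$, where $\tilde P_n$ acts in $x$. Then $J^3_x k_n \in L^\infty_x L^2_y$, because $J^3_x\tilde P_n$ is convolution with an $L^1$ kernel. Moreover
$$\|k_n\|_{L^\infty_x L^2_y}\lesssim \|k\|_{L^\infty_x L^2_y}$$
uniformly in $n$, as in \eqref{auxi_inequality_1}. Let $u_{R,n}$ be the solution of \eqref{truncated_eq} with data $u_{0,n}$ and kernel $k_n$. By Proposition \ref{localwellres} with $s=3$, $u_{R,n}\in L^2_\omega Z^{3,b}_T$, so Lemma \ref{lem:regularity_priori_estimate} applies to it. That lemma is stated only for stopping times $\sigma\le \sigma^T_R(u_{R,n})$, whereas we need $\sigma\le\sigma^T_R(u_R)$. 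I would check that its proof never uses this restriction: the It\^o formula for $M$ and the BDG/Gronwall steps only use that the truncated nonlinearity $B_R$ contributes $\int (B_R(u))_x u\,dx$. This term does not vanish in general, but it does vanish when both cutoffs $\theta$ equal $1$. So I will instead work with the stopping time
$$\sigma_n := \sigma \wedge \sigma^T_{R/2}(u_{R,n})$$
(or a similar choice). Lemma \ref{lem:regularity_priori_estimate} then gives, uniformly in $n$,
$$\|u_{R,n}\|_{L^p_\omega L^\infty_{\sigma_n}L^2_x}\le C(p,T,\|k\|_{L^\infty_xL^2_y})\,\|u_0\|_{L^2_x}.$$

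\textbf{Convergence.} Next I would show $u_{R,n}\to u_R$ in $L^2_\omega Z^{0,b}_T$. Subtract the two truncated equations on a short interval $[0,T_*]$. Proposition \ref{trunbiliearestis} handles the difference of the nonlinearities. The stochastic term splits as
$$(u_{R,n}-u_R)\Phi_n + u_R(\Phi_n-\Phi).$$
The first piece is controlled by Proposition \ref{stochesti}. For the second, Lemma \ref{stochasticesti} bounds it by
$$\|u_R(k_n-k)\|_{L^2_\omega L^2_{T}L^2_{x,y}},$$
which tends to $0$ by Lemma \ref{lem:convergence_need_to_explain}, applied for a.e.\ $t$, together with dominated convergence in $t$ using the uniform bound \eqref{auxi_inequality_1}. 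Choosing $T_*$ small as in Proposition \ref{localwellres} gives
$$\|u_{R,n}-u_R\|_{L^2_\omega Z^{0,b}_{T_*}}\lesssim \|u_{0,n}-u_0\|_{L^2}+o(1).$$
Iterating over subintervals extends this to $[0,T]$. Passing to a subsequence, $u_{R,n}\to u_R$ in $C_TL^2_x$ almost surely and $\|u_{R,n}\|_{Z^{0,b}_t}\to\|u_R\|_{Z^{0,b}_t}$ uniformly in $t$. Then $\liminf_n \sigma^T_{R'}(u_{R,n})\ge \sigma^T_{R'-\delta}(u_R)$ for every $\delta>0$.

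\textbf{Conclusion and main obstacle.} Fatou's lemma transfers the uniform bound to $u_R$ on $[0,\sigma\wedge\sigma^T_{R'}(u_R)]$. Letting the auxiliary level $R'\uparrow R$, with monotone convergence, covers the full range $[0,\sigma]$. The main obstacle is this stopping-time bookkeeping: the a priori estimate is only available up to the approximants' own stopping times, and these converge merely in a lower-semicontinuous sense. If adjusting levels becomes awkward, I would avoid the $\sigma^T_R$ restriction altogether. The cleaner route is to repeat the It\^o computation of Lemma \ref{lem:regularity_priori_estimate} directly for $u_{R,n}$ up to any stopping time $\le\sigma^T_R(u_R)\wedge\sigma^T_R(u_{R,n})$, and then use the convergence above.
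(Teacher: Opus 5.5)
Your proposal is correct and follows essentially the paper's route: you frequency-truncate $u_0$ and $k$, apply Lemma \ref{lem:regularity_priori_estimate} to $u_{R,n}$ up to $\sigma\wedge\sigma^T_R(u_{R,n})$, prove $u_{R,n}\to u_R$ in $L^2_\omega Z^{0,b}_T$ via Proposition \ref{trunbiliearestis}, Proposition \ref{stochesti} and Lemma \ref{lem:convergence_need_to_explain}, and transfer the bound using $\sigma^T_{R-\delta}(u_R)\le\sigma^T_R(u_{R,n})$ once the two are $\delta$-close, then let $\delta\to0$; your a.s.\ subsequence plus Fatou is an equally valid substitute for the paper's sets $E_{\delta,\epsilon}$. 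The detour through level $R/2$ and the extra limit $R'\uparrow R$ are unnecessary, because both cutoffs in $B_R$ are identically $1$ up to $\sigma^T_R(u_{R,n})$, so Lemma \ref{lem:regularity_priori_estimate} applies directly at level $R$.
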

	\begin{proof}[\textbf{Proof}]		
		Recall that $\theta\in C_0^\infty (\mathbb{R})$ and $\mathrm{supp}~\theta\subset [-2,2]$, $\theta|_{[-1,1]} = 1$. We set
		\begin{equation*}
			k_m(x,y):=\mathscr{F}_{\xi}^{-1}(\theta(\xi/m)\mathscr{F}_x(k(x,y))(\xi)),~ u_0^{(m)}:=\mathscr{F}_{\xi}^{-1}(\theta(\xi/m)\mathscr{F}_x(u_0)(\xi))			
		\end{equation*}
		and 
		\begin{equation*}
			\Phi_mf(x):=\int_{\mathbb{R}} k_m(x,y) f(y)~dy, \quad \forall~ f\in L^2(\mathbb{R}).
		\end{equation*}
		Let $u_R^{(m)}$ be the solution of \eqref{truncated_eq} constructed in Proposition \ref{localwellres} with initial data $u_0^m$ and noise kernel $k_m$. By Young's inequality, it is clear that for any $s\geq 0$ we have 
		$\|u_0^{(m)}\|_{H^s}\leq C_s\langle m\rangle^s \|u_0\|_{L^2}$
		and
		\begin{align*}
			\left\|J_x^sk_m\right\|_{L_x^\infty L_y^2} & = \|\mathscr{F}_{\xi}^{-1}(\langle\xi\rangle^s\theta(\xi/m)\mathscr{F}_x(k(x,y))(\xi))\|_{L_x^\infty L_y^2} \\
			& \leq \| \mathscr{F}_{\xi}^{-1}(\langle\xi\rangle^s\theta(\xi/m))\|_{L_x^1} \|k(x,y) \|_{L_x^\infty L_y^2} \\
			& \leq C_s\langle  m\rangle^s\|k(x,y) \|_{L_x^\infty L_y^2}.
		\end{align*}
		Note that $\|u_0^{(m)}\|_{L^2}$, $\|k_m\|_{L_x^\infty L_y^2}$ are uniformly bounded for $m$. Therefore, according to Proposition \ref{localwellres} and Lemma \ref{lem:regularity_priori_estimate}, we have
		\begin{equation}\label{priori_approximation_norm}
			\|u^{(m)}_R\|_{L^p_\omega L_{\sigma\wedge\sigma_{R}^T(u_R^{(m)})}^\infty L^2_x}\leq C(p,T,\|k\|_{L_x^\infty L_y^2})\|u_0\|_{L^2_x}.
		\end{equation}		
		Now, we turn to the convergence property of $\{u_R^{(m)}\}$. For any $0<T_1<\min\{1,T\}$, by the proof of the local well-posedness and Lemma \ref{stochasticesti} one has
		\begin{align*}
			& \quad \|u_R-u_R^{(m)}\|_{L_\omega^2Z^{0,b}_{T_1}}\\
			& \lesssim \|u_0-u_0^{(m)}\|_{L^2}+ \left\|\int_{0}^{t} U(t-t')(B_R(u_R)-B_R(u^{(m)}_R))_{x}~dt'\right\|_{L_\omega^2Z^{0,b}_{T_1}}\\
			& \quad + \left\| \int_{0}^{t} U(t-t')((u_R-u_R^{(m)})\Phi_m dW_{t'})\right\|_{L_\omega^2Z^{0,b}_{T_1}}\\
			& \quad + \left\| \int_{0}^{t} U(t-t')(u_R(\Phi-\Phi_m)dW_{t'})\right\|_{L^2_\omega Z^{0,b}_{T_1}}\\
			& \lesssim \|u_0-u_0^{(m)}\|_{L_x^2}+T_1^{\varepsilon}R\|u_R-u_R^{(m)}\|_{L_\omega^2Z^{0,b}_{T_1}} +T_1^{1/2}\|u_R-u_R^{(m)}\|_{L^2_\omega Z^{0,b}_{T_1}}\\
			& \quad +\|u_R(\Phi-\Phi_m)e_j\|_{L^2_\omega L^2_{T_1}l^2_jL^2_x}.
		\end{align*}
		Therefore, if we choose $T_1=T_1(R,\varepsilon,T,\|k\|_{L_x^\infty L_y^2})$ sufficiently small, then we have 
		\begin{equation*}
			\|u_R-u_R^{(m)}\|_{L_\omega^2Z^{0,b}_{T_1}}\lesssim \|u_0-u_0^{(m)}\|_{L_x^2}+\|u_R(\Phi-\Phi_m)e_j\|_{L^2_\omega L^2_{T_1}l^2_jL^2_x}.
		\end{equation*}
		According to Lemma \ref{lem:convergence_need_to_explain}, the right hand converges to zero as $m\rightarrow\infty$. Thus one has $\lim_{m\rightarrow \infty}\|u_R-u_R^{(m)}\|_{L_\omega^2Z^{0,b}_{T_1}}=0$. Then, by dividing $[0,T]$ into finite intervals, we have
		\begin{equation}\label{L_omega^2_convergence}
			\lim_{m\rightarrow \infty}\|u_R-u_R^{(m)}\|_{L_\omega^2Z^{0,b}_{T}}=0.
		\end{equation}
		By \eqref{L_omega^2_convergence}, $\{u_R^{(m)}\}_{m\in\mathbb{N}^+}$ converges to $u_R$ in $\mathbb{P}$ sense. Therefore, for any $0<\delta<R$, $\epsilon>0$ there exists a sequence $\{m_n\}$ such that
		\begin{equation}\label{converge_in_the_sense_of_probabilty}
			\mathbb{P}( \|u_R-u_R^{(m_n)} \|_{Z_T^{0,b}}>\delta)<\frac{\epsilon}{2^n}.
		\end{equation}
		Let $E_{\delta,\epsilon}:=\bigcap_n\{ \omega\in\Omega: \|u_R-u_R^{(m_n)} \|_{Z_T^{0,b}}\leq \delta\}$. We have
		$$\mathbb{P}(E_{\delta,\epsilon}^c)\leq \sum_n\mathbb{P}( \|u_R-u_R^{(m_n)} \|_{Z_T^{0,b}}>\delta)<\epsilon.$$ 
		Note that for any $\omega\in E_{\delta,\epsilon}$, one has $\sigma^T_{R-\delta}(u_R)\leq \sigma^T_{R}(u_R^{(m_n)})$, $\forall~n$ and $\|u_R\|_{L^\infty_TL^2_x}\leq \|u_R^{(m_n)}\|_{L^\infty_TL_x^2}+\delta$ . Let $\sigma_\delta = \sigma\wedge \sigma^T_{R-\delta}(u_R)$. Then
		$$
		\|\chi_{E_{\delta,\epsilon}} u_R \|_{L^p_{\omega}L^\infty_{\sigma_\delta}L_x^2}\leq  \|\|u_R^{(m_n)}\|_{L^\infty_{\sigma_\delta}L_x^2}+\delta\|_{L^2_{\omega}}\leq  \delta+\|u^{(m_n)}_R\|_{L^p_\omega L_{\sigma_{R}^T(u_R^{(m_n)})}^\infty L^2_x}.
		$$
		By \eqref{priori_approximation_norm} one has
		\begin{align*}
			\|\chi_{E_{\delta,\epsilon}} u_R \|_{L^p_{\omega}L^\infty_{\sigma_\delta}L_x^2}\leq  \delta+C(p,T,\|k\|_{L_x^\infty L_y^2})\|u_0\|_{L^2_x}.
		\end{align*}
		By taking $\epsilon\rightarrow 0^+$ we obtain $\|u_R \|_{L^p_{\omega}L^\infty_{\sigma_\delta}L_x^2}\leq  \delta+C(p,T,\|k\|_{L_x^\infty L_y^2})\|u_0\|_{L^2_x}$. Since $\lim_{\delta\rightarrow 0^+}\sigma_{R-\delta}^T(u_R) = \sigma_R^T(u_R)$, we conclude the proof by taking $\delta$ tend to zero.
	\end{proof}

	\begin{prop}\label{prop:global_well-posedness}
		Let $k\in L_x^\infty L_y^2$, $u_0\in L^2_x$, $u$ be the solution of \eqref{mskdv}. For any $T>0$, we have $\sigma^T = T$, $\text{a.s.}$  $\mathbb{P}$.
	\end{prop}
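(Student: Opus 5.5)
We argue by contradiction, following the deterministic continuation argument. Suppose $\mathbb{P}(\sigma^T<T)=\eta>0$ for some $T>0$. Since $\sigma^T_R(u_R)$ is non-decreasing in $R$, on the event $\{\sigma^T<T\}$ we have $\sigma^T_R(u_R)<T$ for every $R$. Hence $\|u\|_{Z^{0,b}_{\sigma^T_R}}=R$ for every $R$, by continuity and monotonicity of $t\mapsto\|u\|_{Z^{0,b}_t}$. On the other hand, Proposition \ref{prop:u_R_L_x^2_priori_esti} with $\sigma=\sigma^T_R(u_R)$ and Fatou's lemma give $\sup_{t<\sigma^T}\|u(t)\|_{L^2_x}<\infty$ almost surely. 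So there are $M<\infty$ and an event $A\subset\{\sigma^T<T\}$ with $\mathbb{P}(A)\ge\eta/2$ on which $\sup_{t<\sigma^T}\|u(t)\|_{L^2_x}\le M$. The goal is to show that, on $A$, an $L^2_x$ bound at time $t_0$ controls the $Z^{0,b}$ norm on a window $[t_0,t_0+\delta]$ with $\delta=\delta(M,\omega)>0$. This would contradict blow-up of the $Z^{0,b}$ norm in finite time.

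\textbf{Local control from an $L^2$ bound.} I will work pathwise on the untruncated equation restarted at a time $t_0<\sigma^T$. By Proposition \ref{biliearesti}, more precisely the sharper forms in Lemmas \ref{bilinearestforkdv} and \ref{lowfrebi} (see Remark \ref{rem:notprecise}), the Duhamel bilinear term on $[t_0,t_0+\delta]$ is bounded by $\delta^\varepsilon\|u\|_{Z^{0,b}}^2$. The linear part $U(t-t_0)u(t_0)$ has $Z^{0,b}_\delta$ norm at most $C\|u(t_0)\|_{L^2}\le CM$; this uses the maximal estimate for $P_1U$ from the Taylor expansion trick in Lemma \ref{stochasticesti}, together with Lemma \ref{lem:equa-norm}. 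The stochastic term is the obstruction, because it is not controlled pathwise by $\|u\|$. It is linear in $u$, so I will write $u=\tilde u+z$. Here $z$ is the stochastic convolution driven by $u\Phi dW$ localized at the stopping time $\sigma^T_R$. Proposition \ref{stochesti} bounds $z$ in $L^2_\omega Z^{0,b}$ by $\delta^{1/2}$ times $\|u\|_{L^2_\omega Z^{0,b}}$ on that window.

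To make the contradiction quantitative, I plan to use a grid rather than a pathwise restart. Partition $[0,T]$ into intervals $I_i=[i\delta,(i+1)\delta]$ with $\delta$ small depending only on $M$ and $\|k\|_{L^\infty_xL^2_y}$. On each $I_i$, consider the stopping time at which $\|u\|_{Z^{0,b}(I_i)}$ first exceeds $2CM$, intersected with $\{\|u(i\delta)\|_{L^2}\le M\}$. Up to that stopping time, Proposition \ref{stochesti} and Chebyshev give
\[
\mathbb{P}\bigl(\|z\|_{Z^{0,b}(I_i)}>CM/2\bigr)\lesssim \frac{\delta\,(2CM)^2\|k\|^2}{M^2},
\]
which is small when $\delta$ is small. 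Off this exceptional set, the bootstrap (continuity in $t$ of the $Z$ norm, plus $\delta^\varepsilon M\ll1$) shows that the norm stays below $2CM$ on all of $I_i$.

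\textbf{Main obstacle.} Summing the exceptional probabilities over the $T/\delta$ intervals gives about $T\|k\|^2$, which is not small. So the union bound fails and the Chebyshev estimate must be improved. I will first try higher moments. Rerun Lemma \ref{stochasticesti} in $L^{p}_\omega$ for the $C_TL^2$ and $X^{0,b}$ parts, where BDG works, giving probability about $\delta^{p/2}$ per interval, which is summable. The introduction warns that only $m=1$ works for the $L_x^2L_T^\infty$ low-frequency part. For that part I would use instead the Taylor-expansion bound, which reduces it to $\sup_t$ of a Hilbert-valued martingale $\int P_1U(-t')u\Phi\,dW$ in $L^2_x$. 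This is $L^2_xL^\infty_t$, not $L^\infty_tL^2_x$, and I would try to control it by Doob's inequality applied pointwise in $x$, in $L^p_\omega$ with $p>2$ after Minkowski, giving a $\delta^{p/2}$ gain. If that fails, the fallback is to choose the partition adaptively: use $\delta_i$ depending on $\omega$ through $\mathcal{F}_{i\delta}$-measurable data, and show that the probability that infinitely many restarts occur before $T$ vanishes, via the conditional Chebyshev bound and a Borel–Cantelli argument. Either way the $Z^{0,b}_T$ norm would be finite on most of $A$, contradicting $\sigma^T_R<T$ for all $R$, so $\sigma^T=T$ almost surely.
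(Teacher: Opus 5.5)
Your proposal does not close. The step you label the main obstacle is the entire difficulty, and neither repair you suggest works.

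The $L^p_\omega$ route breaks exactly on the low-frequency $L^2_xL^\infty_T$ component. After Minkowski and Doob/BDG pointwise in $x$, you must bound $\bigl\|\bigl(\int\sum_j|P_1U(-t')(u\Phi e_j)(x)|^2\,dt'\bigr)^{1/2}\bigr\|_{L^2_xL^p_\omega}$. For $p>2$ this is not controlled by any $L^p_\omega L^\infty_tL^2_x$ bound on $u$, because Minkowski now goes the wrong way. Already for an $L^2$-normalized bump translated to a uniformly random point of $[-L,L]$, this norm grows like $L^{1/2-1/p}$. This is precisely the ``only $m=1$'' obstruction mentioned in the introduction.

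The Borel--Cantelli fallback does not touch the real issue either. On $A$ the window lengths are bounded below by $\delta(M)$, so there are finitely many restarts regardless. With the bound you are using, each restart fails with conditional probability of order $\delta_i$, and $\sum_i\delta_i\approx T$.

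The idea you are missing is that you never need to control every window; the paper restarts only once, at $\sigma^T_R$. The argument runs as follows.
\begin{enumerate}
\item Proposition \ref{prop:u_R_L_x^2_priori_esti} bounds $\mathbb{E}\|u(\sigma^T_R)\|^2_{L^2_x}$ uniformly in $R$.
\item Restart the truncated equation from $u(\sigma^T_R)$ with a fresh cutoff $\tilde R$, driven by $W_{\cdot+\sigma^T_R}-W_{\sigma^T_R}$ (strong Markov). It has a deterministic existence time $\tilde T(\tilde R)$ and satisfies $\|v\|_{L^2_\omega Z^{0,b}_{\tilde T}}\le C$, with $C$ independent of $R$ and $\tilde R$.
\item By Chebyshev the cutoff is inactive outside an event of probability $C^2/\tilde R^2<\mu/3$. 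There, on $\{\sigma^T<T\}$, one gets $\sigma^T\ge\sigma^T_R+\tilde T$.
\item Take $R$ so large that $\mathbb{P}(\sigma^T-\sigma^T_R>\tilde T/2)<\mu/2$, which is possible because $\sigma^T_R\to\sigma^T$ a.s. This gives the contradiction.
\end{enumerate}
Only one Chebyshev bound is used, and nothing is summed over windows.

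Alternatively, you can rescue your own decomposition $u=\tilde u+z$ by using the sharper form of the stochastic estimate. Lemma \ref{stochasticesti} together with Lemma \ref{fractionleib} bounds the convolution in $L^2_\omega Z^{0,b}_{T\wedge\sigma}$ by $\|k\|_{L^\infty_xL^2_y}\|u\|_{L^2_\omega L^2_{T\wedge\sigma}L^2_x}$. It does not need $\|u\|_{L^2_\omega Z^{0,b}}$, which is the form stated in Proposition \ref{stochesti}. With this form:
\begin{itemize}
\item The convolution started at time $0$, with integrand stopped at $\sigma^T_R$, is bounded uniformly in $R$ by the a priori estimate. Hence it is a.s. finite in $Z^{0,b}$ up to $\sigma^T$.
\item A purely pathwise bootstrap for $u-z$ then closes on windows of random length $\delta(M,\|z\|_{Z^{0,b}},\omega)$. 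You glue finitely many windows, using that $\chi_I$ is a uniformly bounded multiplier on $H^b$ for $b<1/2$.
\item Equivalently, in your grid the per-window failure bounds become $\|k\|^2\mathbb{E}\int_{I_i}\|u\|^2_{L^2_x}dt/K^2$, with the threshold $K$ fixed before $\delta$. These sum to something small.
\end{itemize}
Bounding the integrand by its $Z^{0,b}$ norm is what tied your threshold to the integrand size and made each window cost order $\delta$.
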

	\begin{proof}[\textbf{Proof}]
		We argue by contradiction. If not, there exist $T,\mu>0$ and $E:=\{\omega:\sigma^T<T\}\in \mathcal{F}_T$ such that $\mathbb{P}(E)=\mu>0$.
		
		For brevity, we denote $\sigma_R^T(u_R)$ by $\sigma_R^T$. According to $\sigma^T = \lim_{R\rightarrow \infty} \sigma_R^T$  $\mathrm{a.s.}$ $\mathbb{P}$, thus for any $\delta>0$ there exists $R = R(\delta)>0$ such that 
		$\mathbb{P}(\{\sigma^T-\sigma^T_R>\delta\})<\mu/2$.
		
		We consider the equation 
		\begin{equation*}
			\begin{aligned}
				v(t) = U(t)u(\sigma_R^T)-\int_{0}^{t} U(t-t')\left(B_{\tilde{R}}(v)\right)_x~dt'+\int_{0}^{t}U(t-t') (v\Phi dW_{t'+\sigma_R^T}),
			\end{aligned}
		\end{equation*}
		where $v(t)$ is $\mathcal{F}_{t+\sigma_R^T}$ adapted. Here, $W_{t+\sigma_R^T}-W_{\sigma_R^T}$ is also a Brownian motion adapted to $\{\mathcal{F}_{t+\sigma_R^T}\}_{t\geq 0}$ according to the strong Markov property of the Brownian motion. See for example \cite{karatzas2014brownian}.
		
		By Lemmas \ref{lowfrebi}, \ref{bilinearestforkdv}, \ref{lowfrebi}, Propositions \ref{stochesti}, \ref{prop:u_R_L_x^2_priori_esti}, there exists $ \tilde{R}$  and $\tilde{T} = \tilde{T}(\tilde{R})$ sufficiently small such that 
		$$\|v\|_{L^2_\omega Z_{\tilde{T}}^{0,b}}\leq C\|u(\sigma_R^T)\|_{L^2_\omega L^2_x}\leq C(\|k\|_{L_x^\infty L_y^2},T,\|u_0\|_{L_x^2}).$$
		Then, by Chebyshev's inequality we have
		$\mathbb{P}(\{\|v\|_{Z_{\tilde{T}}^{0,b}}<\tilde{R}\})\geq 1- C^2/\tilde{R}^2$. Thus, we can choose $\tilde{R}$ sufficiently large such that $C^2/\tilde{R}^2<\mu/3$. 
		Therefore, we have
		\begin{align*}
			\mathbb{P}(\{ \sigma_R^T+\tilde{T}\leq \sigma^T\})& \geq \mathbb{P}(\{\|v\|_{Z_{\tilde{T}}^{0,b}}<\tilde{R}\}\cap E)\\
			& \geq 1- (\mathbb{P}(E^c)+\mathbb{P}(\{\|v\|_{Z_{\tilde{T}}^{0,b}}<\tilde{R}\}^c))\\
			& \geq \frac{2}{3}\mu,
		\end{align*}
		By choosing $\delta = \tilde{T}/2$, we conclude the proof by contradiction.
	\end{proof}
	
	Now we show the high regularity of the solution if $u_0$ and $k$ have high regularity.
	
	\begin{prop}\label{highreglobal}
		Suppose that $s\geq 
		0$, $J^s_xk\in L_x^\infty L_y^2$, $u_0\in H^s_x$. Let $u$ be the unique global solution of \eqref{mskdv} with $u\in Z^{0,b}_T$ for any $T>0$, $\mathrm{a.s.}$ $\mathbb{P}$. Then,  $u$ is also in  $Z^{s,b}_T$ almost surely.
	\end{prop}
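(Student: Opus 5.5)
The plan is a persistence-of-regularity argument. The lifespan of the $Z^{0,b}$ solution is already controlled, and the $Z^{s,b}$ norm enters all estimates only linearly, with coefficients depending on $Z^{0,b}$-type quantities.

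Fix $T>0$. Since $u$ is global in $Z^{0,b}_T$ almost surely, we have $\sigma^T_R(u_R)\uparrow T$ as $R\to\infty$ by Proposition \ref{prop:global_well-posedness}. On $[0,\sigma^T_R(u_R)]$ we have $u=u_R$, where $u_R$ solves \eqref{truncated_eq}. By Proposition \ref{localwellres}, since $u_0\in H^s$ and $J^s_xk\in L^\infty_xL^2_y$, the truncated solution satisfies $u_R\in L^2_\omega Z^{s,b}_T$, with
$$\|u_R\|_{L^2_\omega Z^{s,b}_T}\le C(\|J^s_xk\|_{L^\infty_xL^2_y},R,T)\|u_0\|_{H^s}.$$
The only thing to check is that the $Z^{s,b}$ solution produced there coincides with the $Z^{0,b}$ solution $u_R$. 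This holds because the Picard iterates $\mathcal{T}_R^n(0)$ are bounded in $L^2_\omega Z^{s,b}$ and converge in $L^2_\omega Z^{0,b}$ to the unique fixed point, as already shown in the proof of Proposition \ref{localwellres}. Lower semicontinuity of the $Z^{s,b}_{T_*}$ norm under $Z^{0,b}_{T_*}$ convergence (Fatou on the Fourier side, after passing to an a.s. convergent subsequence) then gives $u_R\in Z^{s,b}_T$ almost surely. Consequently $u\in Z^{s,b}_{\sigma^T_R(u_R)}$ almost surely for each $R$.

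To conclude, take $\Omega_R=\{\sigma^T_R(u_R)=T\}$. On $\Omega_R$ we have $u=u_R$ on all of $[0,T]$, hence $u\in Z^{s,b}_T$ there, almost surely. By Proposition \ref{prop:global_well-posedness}, $\mathbb{P}(\bigcup_R\Omega_R)=1$. We must rule out the event where $\sigma^T=T$ is only reached as a limit with $\sigma_R^T<T$ for all $R$. On that event $\|u\|_{Z^{0,b}_t}\to\infty$ as $t\uparrow T$, which contradicts $u\in Z^{0,b}_T$ almost surely (the global solution has finite $Z^{0,b}_T$ norm, hence lies below level $R$ for $R$ large). So for almost every $\omega$ there is $R$ with $\|u\|_{Z^{0,b}_T}<R$, giving $\sigma_R^T=T$, and the union covers a full-measure set.

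The main obstacle is the identification step: ensuring that the restriction of the $Z^{0,b}$ solution to $[0,\sigma_R^T]$ equals the $Z^{s,b}$-regular truncated solution across the subdivision of $[0,T]$ into intervals of length $T_*$. On each subinterval, $T_*$ now depends on $\|J^s_xk\|$ rather than only on $\|k\|$, so the subdivision must be taken with the smaller of the two step sizes. Uniqueness in $L^2_\omega Z^{0,b}$ on each piece then transfers regularity step by step. I would also verify that the truncation $B_R$ uses only $Z^{0,b}$-quantities, namely $\|P_1u\|_{L^2_xC_t}$ and $\|u\|_{X^{0,b}_t}$. By Lemma \ref{truncated_equation} and Corollary \ref{generals}, this makes the $s$-estimate linear in $\|u\|_{Z^{s,b}}$ with factor $RT_*^\varepsilon$, which is what makes the bootstrap close.
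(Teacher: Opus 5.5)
Your proof is correct and is essentially the paper's argument: both identify $u$ with the truncated solution $u_R\in L^2_\omega Z^{s,b}_T$ from Proposition \ref{localwellres} on $[0,\sigma_R^T(u_R)]$, and then exhaust $\Omega$ as $R\to\infty$. The only difference is cosmetic. The paper works on the events $\{\sigma_R^T(u_R)>T/2\}$ with Chebyshev's inequality, obtains $u\in Z^{s,b}_{T/2}$, and then uses that $T$ is arbitrary. You instead use $\{\sigma_R^T(u_R)=T\}$, together with the hypothesis $u\in Z^{0,b}_T$ a.s., to show these events cover a full-measure set; note that Proposition \ref{prop:global_well-posedness} alone does not give this, as you yourself observe.
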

	\begin{proof}[\textbf{Proof}]
		Let $u_R$ be the solution of \eqref{truncated_eq} constructed in Proposition \ref{localwellres}. 
		
		On the one hand, by the fact $T = \lim_{R\rightarrow \infty} \sigma^T_R(u_R)$, $ \mathrm{a.s.}$ $\mathbb{P}$, for any $\epsilon>0$, there exists $R = R(\epsilon)$ such that $\mathbb{P}(\{\sigma^T_{R}(u_R)>T/2\})>1-\epsilon$. 
		
		On the other hand, by Proposition \ref{localwellres} and Chebyshev's inequality one has
		$$\mathbb{P}(\{\|u_R\|_{Z^{s,b}_T}<M\})\geq 1- \frac{C(\|u_0\|_{H^s}, \|J^s_xk\|_{L_x^\infty L_y^2},R,T)^2}{M^2}.$$
		Thus by choosing $M = M(R)$ sufficiently large, we have $\mathbb{P}(\{\|u_R\|_{Z^{s,b}_T}<M\})> 1- \epsilon$. Then
		$$
		\mathbb{P}(\{\|u\|_{Z^{s,b}_{T/2}}<M\})>\mathbb{P}(\{\sigma^T_{R}(u_R)>T/2\}\cap \{\|u_R\|_{Z^{s,b}_T}<M\})>1-2\epsilon.
		$$
		Hence, by taking $\epsilon$ tend to zero, for any $T>0$ we have $\mathbb{P}(\{\|u\|_{Z^{s,b}_{T/2}}<\infty\}) = 1$, which concludes the proof.
	\end{proof}
	
	\appendix
	\section{The proof of Lemma \ref{truncated_equation}}\label{truncated}
	For the sake of simplicity, we may assume $R = 1$. Let 
	$$f(t,\xi) = \mathscr{F}_x(U(-t)u(t))(\xi),\ g(t,\xi) = \mathscr{F}_x(U(-t)v(t))(\xi),$$
	$$\theta_u(t) = \theta(\|u\|^2_{X^{0,b}_t}),\ \theta_v(t) = \theta(\|v\|^2_{X^{0,b}_t}).$$
	We define
	\begin{equation*}
		\sigma_u =\left\{
		\begin{aligned}
			&\inf\{t> 0: \|u\|_{X^{0,b}_t}\geq 2\}, &\|u\|_{X^{0,b}_T}\geq 2,\\
			&T, & \|u\|_{X^{0,b}_T} < 2.
		\end{aligned}
		\right.
	\end{equation*}
	Similarly, we define $\sigma_v$.%
	
	Firstly, we prove $\|\theta(\|u\|^2_{X_t^{0,b}})u\|_{X_T^{0,b}}\lesssim 1$. According to Lemma \ref{lem:equa-norm} and the definition of $X^{0,b}_T$, one has $\|\theta(\|u\|^2_{X_t^{0,b}} )u\|_{X^{0,b}_T}\sim 
	\|\theta(\|u\|^2_{X_t^{0,b}})u\|_{X^{0,b}_{\sigma_u}}$ and
	\begin{align*}
		&\quad \|\theta(\|u\|^2_{X_t^{0,b}} )u\|_{X^{0,b}_{\sigma_u}}^2\\
		&\lesssim \int_{\mathbb{R}}\int_0^{\sigma_u} (1+t^{-2b})|\theta(\|u\|^2_{X_t^{0,b}})f(t,\xi)|^2~dtd\xi\\
		&\quad+\int_{\mathbb{R}}\int_0^{\sigma_u}\int_0^t\frac{|\theta(\|u\|^2_{X_t^{0,b}})f(t,\xi)-\theta(\|u\|^2_{X_{t'}^{0,b}})f(t',\xi)|^2}{|t-t'|^{1+2b}}~dt'dtd\xi\\
		& \lesssim \|u\|^2_{X_{\sigma_u}^{0,b}}+\int_{\mathbb{R}}\int_0^{\sigma_u}\int_0^t\frac{|\theta(\|u\|^2_{X_t^{0,b}})-\theta(\|u\|^2_{X_{t'}^{0,b}})|^2|f(t',\xi)|^2}{|t-t'|^{1+2b}}~dt'dtd\xi.
	\end{align*}
	Since $0\leq \theta\leq 1$, $|\theta'|\lesssim 1$, $t'\leq t$, one has
	\begin{align*}
		&\quad |\theta(\|u\|^2_{X_t^{0,b}})-\theta(\|u\|^2_{X_{t'}^{0,b}})|\\
		&\lesssim \|u\|_{X_t^{0,b}}^2-\|u\|^2_{X_{t'}^{0,b}}\\
		& \sim \int_{\mathbb{R}}\int_{t'}^{t} \left((1+t_1^{-2b})|f(t_1,\eta)|^2+\int_0^{t_1}\frac{|f(t_1,\eta)-f(t_2,\eta)|^2}{|t_1-t_2|^{1+2b}}dt_2\right)~dt_1d\eta.
	\end{align*}
	Then by integrating $t$ and the Hardy inequality, one has
	\begin{align*}
		&\quad\int_{\mathbb{R}}\int_0^{\sigma_u}\int_0^t\frac{|\theta(\|u\|^2_{X_t^{0,b}})-\theta(\|u\|^2_{X_{t'}^{0,b}})|^2|f(t',\xi)|^2}{|t-t'|^{1+2b}}~dt'dtd\xi\\
		&\lesssim \int_{\mathbb{R}^2_{\xi,\eta}}\int_0^{\sigma_u}\int_{t'}^{\sigma_u}\frac{|f(t',\xi)|^2}{|t_1-t'|^{2b}} \bigg((1+t_1^{-2b})|f(t_1,\eta)|^2\\
		&\hspace{150pt}+\int_0^{t_1}\frac{|f(t_1,\eta)-f(t_2,\eta)|^2}{|t_1-t_2|^{1+2b}}~dt_2\bigg)~dt_1  dt' d\eta d\xi\\
		&\lesssim \|u\|_{X_{\sigma_u}^{0,b}}^4.
	\end{align*}
	Thus
	\begin{equation}\label{geconproc}
		\|\theta(\|u\|^2_{X_t^{0,b}} )u\|_{X^{0,b}_{\sigma_u}} \lesssim \|u\|_{X^{0,b}_{\sigma_u}} + \|u\|^2_{X^{0,b}_{\sigma_u}} \lesssim 1.
	\end{equation}
	Similarly, for $s\geq 0$ we have
	\begin{align*}
		\|\theta(\|u\|^2_{X_t^{0,b}} )u\|_{X^{s,b}_{T}}\lesssim \|u\|_{X^{s,b}_{\sigma_u}}+\|u\|_{X^{0,b}_{\sigma_u}}\|u\|_{X^{s,b}_{\sigma_u}}\lesssim \|u\|_{X^{s,b}_T}.
	\end{align*}
	We omit the details.
	
	Now, we show $\|\theta(\|u\|^2_{X_t^{0,b}} )u-\theta(\|v\|^2_{X_t^{0,b}} )v\|_{X^{0,b}_T}\lesssim \|u-v\|_{X^{0,b}_T}$. Without loss of the generality, we assume $\sigma_v\leq \sigma_u$. Therefore, we have  $\|\theta(\|u\|^2_{X_t^{0,b}} )u-\theta(\|v\|^2_{X_t^{0,b}}\big )v\|_{X^{0,b}_T}\sim \|\theta(\|u\|^2_{X_t^{0,b}} )u-\theta(\|v\|^2_{X_t^{0,b}} )v\|_{X^{0,b}_{\sigma_u}}$. For the sake of simplicity, we would also assume $\sigma_u = T$. Then $\|u\|_{X^{0,b}_{T}}\leq 2$. If $\|v\|_{X^{0,b}_{T}}\geq 3$, we have
	\begin{align*}
		\|\theta(\|u\|^2_{X_t^{0,b}} )u-\theta(\|v\|^2_{X_t^{0,b}} )v\|_{X^{0,b}_T}&\leq \|\theta(\|u\|^2_{X_t^{0,b}} )u\|_{X^{0,b}_T}+\|\theta(\|v\|^2_{X_t^{0,b}} )v\|_{X^{0,b}_T}\\
		&\lesssim \|v\|_{X^{0,b}_T}\sim \|u-v\|_{X^{0,b}_T}.
	\end{align*}
	Thus we would assume $\|v\|_{X^{0,b}_T}<3$. By the triangle inequality,
	\begin{align*}
		\|\theta(\|u\|^2_{X_t^{0,b}} )u-\theta(\|v\|^2_{X_t^{0,b}} )v\|_{X^{0,b}_T}&\leq \|\theta(\|u\|^2_{X_t^{0,b}} )(u(t)-v(t))\|_{X^{0,b}_T}\\
		&\quad+\|(\theta(\|u\|^2_{X_t^{0,b}} )-\theta(\|v\|^2_{X_t^{0,b}} ))v(t)\|_{X^{0,b}_T}.
	\end{align*}
	By the same argument as in \eqref{geconproc}, we have $\|\theta(\|u\|^2_{X_t^{0,b}} )(u(t)-v(t))\|_{X^{0,b}_T}\lesssim \|u-v\|_{X^{0,b}_T}$.
	
	Since $|\theta(\|u\|^2_{X_t^{0,b}} )-\theta(\|v\|^2_{X_t^{0,b}} )|\lesssim \|u-v\|_{X_t^{0,b}}\lesssim \|u-v\|_{X_T^{0,b}}$, for the estimate of $\|(\theta(\|u\|^2_{X_t^{0,b}} )-\theta(\|v\|^2_{X_t^{0,b}}))v(t)\|_{X^{0,b}_T}$ we focus on the term
	\begin{align*}
		&\int_{\mathbb{R}}\int_0^{T}\int_0^t\frac{|\theta(\|u\|^2_{X_t^{0,b}})-\theta(\|v\|^2_{X_t^{0,b}})-\theta(\|u\|^2_{X_{t'}^{0,b}})+\theta(\|v\|^2_{X_{t'}^{0,b}})|^2|g(t',\xi)|^2}{|t-t'|^{1+2b}}~dt'dtd\xi
	\end{align*}
	Let $h(t) = \|u\|_{X_t^{0,b}}^2-\|v\|_{X^{0,b}_t}^2$, $\forall~0<t<T$. Then
	\begin{align*}
		&\quad\theta\big(\big\|u\|^2_{X_t^{0,b}}\big)-\theta\big(\big\|v\|^2_{X_t^{0,b}}\big)-\theta\big(\big\|u\|^2_{X_{t'}^{0,b}}\big)+\theta\big(\big\|u\|^2_{X_{t'}^{0,b}}\big)\\
		& = \int_0^1 \theta'(\|v\|_{X^{0,b}_t}^2+\delta h(t))h(t)-\theta'(\|v\|_{X^{0,b}_{t'}}^2+\delta h(t'))h(t')~d\delta.
	\end{align*}
	Since $\theta\in C_0^\infty(\mathbb{R})$, we have
	\begin{align*}
		&\quad|\theta(\|u\|^2_{X_t^{0,b}})-\theta(\|v\|^2_{X_t^{0,b}})-\theta(\|u\|^2_{X_{t'}^{0,b}})+\theta(\|u\|^2_{X_{t'}^{0,b}})|^2\\
		&\lesssim |h(t)-h(t')|^2+ |h(t')|^2\int_0^1 |\|v\|_{X^{0,b}_t}^2+\delta h(t)-(\|v\|_{X^{0,b}_{t'}}^2+\delta h(t'))|~d\delta\\
		&\lesssim |h(t)-h(t')|^2+ |h(t')|^2(\|u\|_{X^{0,b}_t}^2-\|u\|_{X^{0,b}_{t'}}^2)+ |h(t')|^2(\|v\|_{X^{0,b}_t}^2-\|v\|_{X^{0,b}_{t'}}^2).
	\end{align*}
	Note that $|h(t')|\leq \|u-v\|_{X_{t'}^{0,b}}(\|u\|_{X_{t'}^{0,b}}+\|v\|_{X_{t'}^{0,b}})\lesssim\|u-v\|_{X^{0,b}_T}$. Thus, by the same argument as in \eqref{geconproc} we have
	\begin{align*}
		&\quad\int_{\mathbb{R}}\int_0^{T}\int_0^t\frac{|h(t')|^2(\|u\|_{X^{0,b}_t}^2-\|u\|_{X^{0,b}_{t'}}^2)|g(t',\xi)|^2}{|t-t'|^{1+2b}}~dt'dtd\xi\\
		&\lesssim \|u-v\|_{X^{0,b}_T}^2\|u\|_{X_{T}^{0,b}}^2\|v\|_{X^{0,b}_T}^2\lesssim \|u-v\|_{X^{0,b}_T}^2
	\end{align*}
	and
	\begin{align*}
		&\quad\int_{\mathbb{R}}\int_0^{T}\int_0^t\frac{|h(t')|^2(\|v\|_{X^{0,b}_t}^2-\|v\|_{X^{0,b}_{t'}}^2)|g(t',\xi)|^2}{|t-t'|^{1+2b}}~dt'dtd\xi\\
		&\lesssim \|u-v\|_{X^{0,b}_T}^2\|v\|_{X^{0,b}_T}^4\lesssim \|u-v\|_{X^{0,b}_T}^2.
	\end{align*}
	Since
	\begin{align*}
		&\quad|h(t)-h(t')| \\
		&\leq \int_{\mathbb{R}}\int_{t'}^{t} (1+t_1^{-2b})||f(t_1,\eta)|^2-|g(t_1,\eta)|^2|~dt_1\\
		&\quad+\int_{\mathbb{R}}\int_{t'}^{t}\int_0^{t_1}\frac{||f(t_1,\eta)-f(t_2,\eta)|^2-|g(t_1,\eta)-g(t_2,\eta)|^2|}{|t_1-t_2|^{1+2b}}~dt_2dt_1d\eta,
	\end{align*}
	by the Cauchy--Schwarz inequality we have
	\begin{align*}
		&\quad|h(t)-h(t')|^2\\
		&\lesssim (\|u\|_{X_t^{0,b}}+\|v\|_{X_t^{0,b}})^2 \bigg(\int_{\mathbb{R}}\int_{t'}^{t} (1+t_1^{-2b})|f(t_1,\eta)-g(t_1,\eta)|^2~dt_1\\
		&\quad+\int_{\mathbb{R}}\int_{t'}^{t}\int_0^{t_1}\frac{|f(t_1,\eta)-g(t_1,\eta)-f(t_2,\eta)+g(t_2,\eta)|^2}{|t_1-t_2|^{1+2b}}~dt_2dt_1d\eta\bigg).
	\end{align*}
	Since $\|u\|_{X_t^{0,b}}+\|v\|_{X_t^{0,b}}\lesssim 1$, we conclude the proof by integrating $t$ first and using the Hardy inequality.
	
	\section{The proof of Lemma \ref{fractionleib}}\label{proofoffrac}
	The case $s = 0$ is trivial. Thus we would assume $s>0$. In fact this is the result of the fractional Leibniz rule. See for example \cite{muscalushlag,benea2016multiple,benea2017quasi}. We give the proof here for completeness. With a little abuse of notation, we use the classical inhomogeneous Littlewood--Paley decomposition here. Let $\varphi\in C_0^\infty((-5/4,5/4))$,  $\psi(\xi) = \varphi(\xi)-\varphi(2\xi)$, $\varphi|_{[-1,1]} = 1$. We define $P_N = \mathscr{F}^{-1}\psi(\cdot/N)\mathscr{F}, N\geq 2$; $P_1 = \mathscr{F}^{-1}\varphi(\cdot)\mathscr{F}$. By the Bony paraproduct decomposition one has
	\begin{align*}
		\|u(x)k(x,y)\|_{L^2_yH_x^s}&\lesssim \|N^s P_N u(x)P_{\lesssim N}k(x,y)\|_{l^2_{N\geq 1}L_{x,y}^2}\\
		&\quad+\sum_{N\gg 1}\|J^s_x(P_Nu(x)P_{\gg N}k(x,y))\|_{L^2_{x,y}}.
	\end{align*}
	Thus we only need to show
	$$\|P_N u(x)P_{\lesssim N}k(x,y)\|_{L_{x,y}^2}\lesssim \|u\|_{L^2_x}\|k\|_{L_x^\infty L_y^2},\quad N\geq 1$$
	and
	$$\|J^s_x(P_Nu(x)P_{\gg N}k(x,y))\|_{L^2_{x,y}}\lesssim \|u\|_{L^2_x}\|J^s_xk\|_{L_x^\infty L_y^2},\quad N\geq 1.$$
	Since $P_{\lesssim N} f = \varphi_{CN}*f$, by H\"{o}lder and Young inequalities, we have
	\begin{align*}
		\|P_N u(x)P_{\lesssim N}k(x,y)\|_{L_{x,y}^2}&\lesssim \|P_Nu\|_{L^2_x}\||\varphi_{CN}|*\|k(x,\cdot)\|_{L^2_y}\|_{L^\infty_x}\\
		&\lesssim \|u\|_{L^2_x}\|k\|_{L_x^\infty L_y^2}.
	\end{align*}
	By Lemma 2 on page 133 in \cite{stein1970singular} we only need to show
	$$\|D^s_x(P_Nu(x)D_{x}^{-s}P_{\gg N}k(x,y))\|_{L^2_{x,y}}\lesssim \|u\|_{L^2_x}\|k\|_{L_x^\infty L_y^2},\quad N\geq 1.$$
	By scaling we reduce the inequality to $N = 1$. Let $\tilde{\psi}(\xi) = \psi(\xi/C)$. Then
	\begin{align*}
		&\quad D^s_x(P_1 u(x)D_{x}^{-s}P_{\gg 1}k(x,y)) \\
		& = \frac{1}{2\pi}\int_{\mathbb{R}^2} |\xi|^se^{ix\xi}\hat{u}(\xi-\eta)\psi(\xi-\eta)|\eta|^{-s}(1-\tilde{\psi}(\eta))\mathscr{F}_xk(\eta,y)~d\xi d\eta.
	\end{align*}
	By the Taylor expansion, on the support of $\psi(\xi-\eta)(1-\tilde{\psi}(\eta))$ one has
	\begin{align*}
		\xi^s = \sum_{n = 0}^\infty a_{n,s}\eta^{s-n}(\xi-\eta)^n,\quad \xi>0;\quad |a_{n,s}|\leq C_s^n
	\end{align*}
	Thus we have
	$$P_{>0}D^s_x(P_1 u(x)D_{x}^{-s}P_{\gg 1}k(x,y)) = \sum_n a_{n,s}P_{>0}(D_x^nP_1 u(x) D_x^{-n}P_{\gg 1}k(x,y)).$$
	Then by H\"{o}lder and Young inequalities, for $C>2C_s$ we have
	\begin{align*}
		&\quad\|P_{>0}D^s_x(P_1u(x)D_{x}^{-s}P_{\gg 1}k(x,y))\|_{L^2_{x,y}}\\
		& \leq \sum_{n=0}^\infty |a_{n,s}|\|D_x^nP_1 u\|_{L^2_x} \|D_x^{-n}P_{\gg 1}k(x,y)\|_{L_x^\infty L_y^2}\\
		&\lesssim \sum_{n=0}^\infty  C_s^n2^n C^{-n} \|u\|_{L^2_x}\|k\|_{L_x^\infty L_y^2}\lesssim \|u\|_{L^2_x}\|k\|_{L_x^\infty L_y^2}.
	\end{align*}
	The argument for $\xi<0$ is similar. We conclude the proof.
	
	\section{Multilinear estimates for the KdV equation in Bourgain spaces}\label{recallmu}
	In this appendix, we recall the multilinear estimates for the KdV equation in the Bourgain spaces. We restate Lemma 5.7 in \cite{WangHHG} using the notation of the present paper. This theorem is used repeatedly in the proof of Lemma \ref{bilinearestforkdv} in this paper.
	
	Recall the Littlewood--Paley projection operators defined by \eqref{freque}, \eqref{modulationdeco}. Let $a_{\min}$, $a_{\mathrm{med}}$, and $a_{\max}$ denote the minimum, median, and maximum of $a_1$, $a_2$, $a_3$.
	\begin{lemma}[Lemma 5.7 in \cite{WangHHG}]\label{multiesti}
		Let $u,v,w\in L^2(\mathbb{R}^2)$ with $\|u\|_{L^2} = \|v\|_{L^2} = \|w\|_{L^2} = 1$. Assume $N_{\mathrm{min}}\geq 2$. Then
		\begin{itemize}
			\item[(a)] 
			\begin{align*}
				\left|\int_{\mathbb{R}^2}Q_{L_1}P_{N_1}uQ_{L_2}P_{N_2}v Q_{L_3}P_{N_3}w~dxdt\right| \lesssim L_{\mathrm{min}}^{1/2}N_{\mathrm{min}}^{1/2}.
			\end{align*}
			\item[(b)] If $N_{\mathrm{min}}\ll N_{\mathrm{max}}$, then
			\begin{align*}
				&\quad \left|\int_{\mathbb{R}^2}Q_{L_1}P_{N_1}uQ_{L_2}P_{N_2}v Q_{L_3}P_{N_3}w~dxdt\right|\\
				&\lesssim (L_1L_2L_3)^{1/2}N_{\mathrm{max}}^{-1/2}\left(\max_{1\leq j\leq 3}\{N_j L_j\}\right)^{-1/2}.
			\end{align*}
			\item[(c)] If $N_{\mathrm{min}}\sim N_{\mathrm{max}}$, then
			\begin{align*}
				\left|\int_{\mathbb{R}^2}Q_{L_1}P_{N_1}uQ_{L_2}P_{N_2}v Q_{L_3}P_{N_3}w~dxdt\right| \lesssim L_{\min}^{1/2}L_{\mathrm{med}}^{1/4}N_{\max}^{-1/4}.
			\end{align*}
		\end{itemize}
	\end{lemma}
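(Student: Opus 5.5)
By Plancherel in $(t,x)$, I would write the integral as
\[
\int_{\zeta_1+\zeta_2+\zeta_3=0} f_1(\zeta_1)f_2(\zeta_2)f_3(\zeta_3),\qquad \zeta_j=(\tau_j,\xi_j),
\]
where $f_1=\widehat{Q_{L_1}P_{N_1}u}$ and similarly for $f_2,f_3$. Each $f_j$ is supported in $\{|\xi_j|\sim N_j,\ |\tau_j-\xi_j^3|\sim L_j\}$ and has $\|f_j\|_{L^2}\le 1$. The whole proof rests on the elementary block estimate
\[
\Big|\int f_1f_2f_3\Big|\le \|f_1\|_{L^2}\|f_2\|_{L^2}\|f_3\|_{L^2}\ \sup_{\zeta}\,\big|\{\zeta_i\in\operatorname{supp}f_i:\ -\zeta-\zeta_i\in\operatorname{supp}f_k\}\big|^{1/2}.
\]
This follows from Cauchy--Schwarz after one places the third function outside, and the role of the three indices can be permuted freely. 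I would also use the resonance identity: on $\zeta_1+\zeta_2+\zeta_3=0$,
\[
\sum_j(\tau_j-\xi_j^3)=-3\xi_1\xi_2\xi_3 .
\]
So either $L_{\max}\gtrsim N_1N_2N_3$, or the integral vanishes (the latter when $L_{\max}\ll N_{\min}N_{\max}^2$ and $N_{\min}\gtrsim1$).

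For (a), I pick $i$ with $L_i=L_{\min}$ as the integration variable. For fixed $\zeta$, the $\tau_i$-fibre of the set has length $\lesssim L_{\min}$. The $\xi_i$-range has length $\lesssim N_{\min}$. If $N_i=N_{\min}$ this is immediate. Otherwise $\xi_i=-\xi-\xi_k$ with $k$ chosen so that $N_k=N_{\min}$ (or $\zeta$ itself carries $N_{\min}$), and again $\xi_i$ ranges over an interval of length $\lesssim N_{\min}$. The set therefore has measure $\lesssim L_{\min}N_{\min}$, which gives (a).

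For (b) and (c), I would instead integrate out both $\tau$-variables of two of the functions, say $f_i,f_k$, keeping $f_j$ (the one with the largest $N_jL_j$) fixed in $L^2$. For fixed $(\tau,\xi)$, the measure of the set is bounded by $\min(L_i,L_k)$ times the measure of
\[
\{\xi_i:\ |h_\xi(\xi_i)-c|\lesssim \max(L_i,L_k)\},\qquad h_\xi(\xi_i)=\xi_i^3+(\xi-\xi_i)^3 .
\]
Here $h_\xi'(\xi_i)=3\xi(2\xi_i-\xi)$.

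In case (b), with $N_{\min}\ll N_{\max}$, one checks that $|h'|\gtrsim N_{\max}N_{\text{other}}$ on the support. With the right choice of which pair to integrate, this gives a measure bound $\lesssim L_iL_k/(N_{\max}\cdot\max_j N_jL_j/L_j)$, up to bookkeeping. Combined with the resonance relation, this produces $(L_1L_2L_3)^{1/2}N_{\max}^{-1/2}(\max_j N_jL_j)^{-1/2}$. The choice of pair is made according to which $N_jL_j$ is maximal.

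In case (c), with $N_{\min}\sim N_{\max}$, $h'$ can vanish when $\xi_i\approx\xi/2$. There I would use $|h''|=6|\xi|\sim N$ instead: the sublevel set of a function with second derivative $\sim N$ at height $L_{\mathrm{med}}$ has measure $\lesssim (L_{\mathrm{med}}/N)^{1/2}$. Integrating $\tau$ against the $L_{\min}$ piece then gives the bound $L_{\min}^{1/2}L_{\mathrm{med}}^{1/4}N_{\max}^{-1/4}$.

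I expect the main obstacle to be (b). Extracting exactly the factor $(\max_j N_jL_j)^{-1/2}$ requires a case analysis on which index carries the low frequency versus the high modulation, including the $\mathit{high}\times\mathit{low}\to\mathit{high}$ configuration where $L_{\max}$ sits on the low-frequency piece. I would follow Tao's $[k;Z]$-multiplier block estimates, and in that configuration I would first try dualizing so that the function with maximal $N_jL_j$ is the one held in $L^2$.
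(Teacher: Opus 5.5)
Your argument is correct. It is essentially the standard Tao-type block-estimate proof on which the cited Lemma~5.7 of \cite{WangHHG} rests; the paper itself gives no proof, only the citation.

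Part (b) is easier than you expect. Whichever function $f_j$ you hold fixed, $|h_\xi'|=3|\xi|\,|2\xi_i-\xi|\gtrsim N_jN_{\max}$ on the nonempty part of the support when $N_{\min}\ll N_{\max}$. So the set has measure $\lesssim L_iL_k/(N_jN_{\max})$, and choosing $j$ to maximize $N_jL_j$ finishes it, with no case analysis and no use of the resonance identity. In (a), simply always put the $N_{\min}$ index among the two integrated functions rather than on $\zeta$.
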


	\section*{Acknowledgment}
	Jie Chen acknowledges the support of NSFC grants 12301116, 12171007. The authors thank Professors Boling Guo and Baoxiang Wang for their invaluable support and encouragement, also thank Yingzhe Ban and Ying Zhang for helping us find the literature on intrinsic characterizations of Sobolev spaces. The authors thank the referees for their suggestions and their careful reading of the manuscript.

	\section*{Conflict of interest statement}
	The authors have no conflict of interest to declare that are relevant to the content of this
	article.
	
	\section*{Data Availability Statement} Not applicable.

	\phantomsection

	\vspace{10pt}
	
	\begin{itemize}[leftmargin=5pt]
		\item[] \scriptsize\textsc{Jie Chen: School of Science, Jimei University, Xiamen 361021, P.R. China}
		
		\textit{E-mail address}: \textbf{jiechern@jmu.edu.cn}
		
		\item[] \scriptsize\textsc{Fan Gu: School of Statistics and Mathematics, Central University of Finance and Economics, Beijing 102206, P.R. China}
		
		\textit{E-mail address}: \textbf{gufan@amss.ac.cn}
	\end{itemize}
\end{document}